\newcommand{\zz}{\mathbb{Z}}
\newcommand{\casetwo}[4]{\left\{ \begin{array}{ll} #1 &\mbox{if $#2$} \\[2mm] #3 &\mbox{if $#4$}\,. \end{array} \right.}
\theoremstyle{plain} 
\newtheorem{theo}{Theorem}[section]
\newtheorem{lem}[theo]{Lemma}
\newtheorem{prop}[theo]{Proposition}
\newtheorem{cor}[theo]{Corollary}
\newtheorem{conj}[theo]{Conjecture}
\newtheorem{prob}[theo]{Problem}
\theoremstyle{definition}
\newtheorem{example}[theo]{Example}
\newtheorem{defi}[theo]{Definition}
\newtheorem{examples}[theo]{Examples}
\theoremstyle{remark}
\newtheorem{rem}[theo]{Remark}
\newtheorem{rems}[theo]{Remarks}
\title[Schubert calculus via formal root polynomials]{Towards generalized cohomology Schubert calculus \\via formal root polynomials}
\author[C.~Lenart]{Cristian Lenart}
\address[Cristian Lenart]{Department of Mathematics and Statistics, State University of New York at Albany, 
Albany, NY 12222, U.S.A.}
\email{clenart@albany.edu}
\urladdr{http://www.albany.edu/\~{}lenart/}
\thanks{C.L. was partially supported by the NSF grants DMS--1101264 and DMS--1362627. He also gratefully acknowledges the hospitality and support of the Max-Planck-Institut f\"ur Mathematik in Bonn, where part of this work was carried out. K.Z. was partially supported by the NSERC Discovery grant  385795-2010 and the Early Researcher Award (Ontario). \\
Keywords: Schubert calculus, equivariant oriented cohomology, flag variety, root polynomial; \\
MSC: 14M15, 14F43, 55N20, 55N22, 19L47, 05E99}
\author[K.~Zainoulline]{Kirill Zainoulline}
\address[Kirill Zainoulline]{Department of Mathematics and Statistics, University of Ottawa, 585 King Edward Street, Ottawa, ON, K1N 6N5, Canada}
\email{kirill@uottawa.ca}
\urladdr{http://mysite.science.uottawa.ca/kzaynull/}
\begin{document}
%
%%%%%%%%%%%%%%%%%%%%%%%%%%%%%%%%%%%

\begin{abstract}
An important combinatorial result in equivariant cohomology and $K$-theory Schubert calculus is represented by the formulas of Billey and Graham-Willems for the localization of Schubert classes at torus fixed points. These formulas work uniformly in all Lie types, and are based on the concept of a root polynomial. In this paper we define formal root polynomials associated with an arbitrary formal group law (and thus a generalized cohomology theory). We focus on the case of the hyperbolic formal group law (corresponding to elliptic cohomology). We study some of the properties of formal root polynomials. We give applications to the efficient computation of the transition matrix between two natural bases of the formal Demazure algebra in the hyperbolic case. As a corollary, we rederive in a transparent and uniform manner the formulas of Billey and Graham-Willems. We also prove the corresponding formula in connective $K$-theory, which seems new, and a duality result in this case. Other applications, including some related to the computation of Bott-Samelson classes in elliptic cohomology, are also discussed. 
\end{abstract}

\maketitle

\section{Introduction} Modern Schubert calculus has been mostly concerned with the cohomology and $K$-theory (as well as their quantum deformations) of generalized flag manifolds $G/B$, where $G$ is a connected complex semisimple Lie group and $B$ a Borel subgroup; Kac-Moody flag manifolds have also been studied, but we will restrict ourselves here to the finite case. Schubert calculus for other cohomology theories was first considered in \cite{BE90,BE92}, but the basic results have only been obtained recently in \cite{CZZ,CZZ1,CPZ,GaRa,hhhcge,HK,kakecf}. After this main theory has been developed, the next step is to give explicit formulas, thus generalizing well-known results in cohomology and $K$-theory, which are usually based on combinatorial structures. This paper is a contribution in this direction. 

We focus on elliptic cohomology, more precisely, on the associated hyperbolic formal group law, which we view as the first case after $K$-theory in terms of complexity. (The correspondence between generalized cohomology theories and formal group laws is explained below.) The main difficulty beyond $K$-theory is the fact that the naturally defined cohomology classes corresponding to a Schubert variety depend on its chosen Bott-Samelson desingularization, and thus on a reduced word for the given Weyl group element (which is not the case in ordinary cohomology and $K$-theory, where we have naturally defined Schubert classes). Thus the problem is to express combinatorially these so-called {\em Bott-Samelson classes}. The additional problem of defining Schubert classes independently of a reduced word will be considered elsewhere. 

In type $A$, there are well-known polynomial representatives for the Schubert classes in cohomology and $K$-theory, namely the {\em Schubert} and {\em Grothendieck polynomials} of Lascoux and Sch\"utzenberger \cite{lasps,lasagv}. We attempted to define similar representatives for Bott-Samelson classes in elliptic cohomology (as normal form representatives for the corresponding quotient ring), but we discovered that the natural elliptic generalizations of Schubert polynomials no longer have positive coefficients. Thus, it seems hard to extend to them, and to the similar generalizations of Grothendieck polynomials, the well-known combinatorial theory in \cite{BJS,Fo94,FK}. 

In this paper, we extend a different approach to combinatorial formulas for Schubert classes in (torus equivariant) cohomology and $K$-theory, which is due to Billey \cite{Bi99} and Graham-Willems \cite{graekt,wilcke}, respectively, and has the advantage of being uniform in all Lie types. Billey and Graham-Willems expressed the localizations of Schubert classes at torus fixed points, using the language in the fundamental papers of Demazure \cite{De74} and Kostant-Kumar \cite{KK86,KK90}. One can also use the language of {\em GKM theory} \cite{gkmeck}.  The combinatorics in \cite{Bi99,wilcke} is developed in the setup of root systems, and is based on the concept of a {\em root polynomial}. The goal of this paper is to define, study, and give applications of certain natural generalizations of root polynomials corresponding to an arbitrary formal group law, which we call {\em formal root polynomials}. 

We start by recalling the Kostant-Kumar and GKM setups for a generalized cohomology theory, in Section~\ref{background}; a central concept is that of the {\em formal Demazure algebra}. Then, in Section~\ref{frpp}, we define the formal root polynomials and study some of their main properties. In particular, since their definition depends on a reduced word for a Weyl group element, we show that, essentially, they are independent of this choice only up to the hyperbolic case (inclusive). We also explain the close connection to the hyperbolic solution of the Yang-Baxter equation \cite{Fr}. In Section~\ref{afrp}, we present our main application of formal root polynomials in the hyperbolic case; namely, we show that they provide an efficient way to compute the transition matrix between two natural bases of the formal Demazure algebra -- a problem which is implicit in \cite{CZZ1}. As a corollary, we are able to rederive in a transparent and uniform manner (i.e., in all Lie types, and for both ordinary cohomology and $K$-theory) the formulas of Billey and Graham-Willems. We also derive the corresponding formula in connective $K$-theory, which seems new, and show that it implies the generalization of the Kostant-Kumar duality result in $K$-theory. We  generalize other results in \cite{Bi99}, related to the Schubert structure constants. Finally, in Section~\ref{bscvrp}, we discuss the application of root polynomials to the computation of Bott-Samelson classes; in particular, we formulate two conjectures in the hyperbolic case, based on the analogy with the ordinary cohomology and $K$-theory cases, as well as on experimental evidence. 

\subsection*{Acknowledgments} We are grateful to W. Graham for sending us his preprint and for kindly explaining his results to us.

\section{Background}\label{background} We briefly recall the main results in Schubert calculus for generalized cohomology theories.

\subsection{Complex oriented cohomology theories} A (one dimensional, commutative) {\em formal group law} over a commutative ring $R$ is a formal power series $F(x,y)$ in $R[[x,y]]$ satisfying \cite{Haz}
\begin{equation} F(x,y)=F(y,x)\,,\;\;\;\;\;F(x,0)=x\,,\;\;\;\;\;F(x,F(y,z))=F(F(x,y),z)\,.\end{equation}
The {\em formal inverse} is the power series $\iota(x)$ in $R[[x]]$ defined by $F(x,\iota(x))=0$. The {\em exponential} of $F(x,y)$ and its compositional inverse, called the {\em logarithm}, are the power series ${\rm exp}_F(x)$ and ${\rm log}_F(x)$ in $R\otimes{\mathbb Q}[[x]]$ satisfying $F(x,y)={\rm exp}_F({\rm log}_F(x)+{\rm log}_F(y))$. 

Let $E^*(\cdot)$ be a complex oriented cohomology theory with base ring $R=E^*(\mbox{pt})$. By \cite{Qu71}, this is equipped with a formal group law $F(x,y)$ over $R$, which expresses the corresponding first Chern class, denoted $c(\cdot)$, of a tensor product of two line bundles ${\mathcal L}_1$ and ${\mathcal L}_2$ on a space $X$ in terms of $c({\mathcal L}_1)$ and $c({\mathcal L}_2)$; more precisely, we have 
\begin{equation}\label{defF} c({\mathcal L}_1\otimes {\mathcal L}_2)=F(c({\mathcal L}_1),\,c({\mathcal L}_2))\,.\end{equation}

We will now refer to the (finite type) generalized flag variety $G/B$, where $G$ is a complex semisimple Lie group, and we let $T$ be the corresponding maximal torus. We use freely the corresponding root system terminology, see e.g. \cite{Ku02}. As usual, we denote the set of roots by $\Phi$, the subsets of positive and negative roots by $\Phi^+$ and $\Phi^-$, the simple roots and corresponding simple reflections by $\alpha_i$ and $s_i$ (for $i=1,\ldots,n$, where $n$ is the rank of the root system), the lattice of integral weights by $\Lambda$, the fundamental weights by $\omega_i$, the Weyl group by $W$, its longest element by $w_\circ$, and its strong Bruhat order by $\le$. For each $w\in W$, we have the corresponding Schubert variety $X_w:=\overline{BwB}$. Given an arbitrary weight $\lambda$, let $\mathcal{L}_\lambda$ be the corresponding line bundle over $G/B$, that is, $\mathcal{L}_\lambda:= G\times_B {\mathbb C}_{-\lambda}$, where $B$ acts on $G$ by right multiplication, and the
$B$-action on ${\mathbb C}_{-\lambda}={\mathbb C}$ corresponds to the character determined by $-\lambda$. 
(This character of $T$ extends to $B$ by defining it to be 
identically one on the commutator subgroup $[B,B]$.) The relation \eqref{defF} becomes 
\begin{equation}\label{defFF} c({\mathcal L}_{\lambda+\nu})=F(c({\mathcal L}_{\lambda}),\, c({\mathcal L}_{\nu}))\,.\end{equation}

We now consider the respective $T$-equivariant cohomology $E_T^*(\cdot)$ of spaces with a $T$-action, see e.g. \cite{hhhcge}. Its base ring $E_T^*(\mbox{pt})$ can be identified (after the respective completion) with the {\em formal group algebra} 
\begin{equation}\label{defS}
S:=R[[y_\lambda]]_{\lambda\in \Lambda}/(y_0,\;
y_{\lambda+\nu}-F(y_{\lambda},y_{\nu}))
\end{equation}
of \cite[Def.~2.4]{CPZ}; in other words, $y_\lambda$ is identified with the corresponding first Chern class of ${\mathcal L}_\lambda$, cf. \eqref{defFF}. The Weyl group acts on $S$ by $w(y_\lambda):=y_{w\lambda}$. 

\begin{examples}\label{exfgl} (1) {\em Ordinary cohomology $H^*(\cdot)$ (with integer coefficients)}. The corresponding formal group law is the additive one $F_a(x,y)=x+y$, with formal inverse $\iota(x)=-x$. We identify $S$ with the completion of the symmetric algebra $Sym_\zz(\Lambda)$ via
$y_\lambda=-\lambda$. In fact, $Sym_\zz(\Lambda)\cong \zz[x_1,\ldots,x_n]$, where $x_i={\omega_i}$. We have $y_{-\lambda}=\iota(y_\lambda)=\lambda$.

(2) {\em $K$-theory $K(\cdot)$}. The corresponding formal group law is the (specialized) multiplicative one $F_m(x,y)=x+y-xy$, with formal inverse $\iota(x)=\frac{x}{x-1}$. We identify $S$ with the completion of the group algebra of the weight lattice $\zz[\Lambda]$; the latter has a $\zz$-basis of formal exponents $\{e^\lambda\,:\,\lambda\in\Lambda\}$, with multiplication $e^\lambda\cdot e^\nu=e^{\lambda+\nu}$. The identification is given by $y_\lambda=1-e^{-\lambda}$. In fact, $\zz[\Lambda]\cong \zz[x_1^{\pm 1},\ldots,x_n^{\pm 1}]$, where $x_i=e^{\omega_i}$. We have $y_{-\lambda}=\iota(y_\lambda)=1-e^{\lambda}$.

(3) {\em Connective $K$-theory}. The corresponding formal group law is the multiplicative one $F_m(x,y)=x+y-\mu_1 xy$, with $\mu_1$ not invertible in $R$. The formal inverse is $\iota(x)=\tfrac{x}{\mu_1x-1}$. The ring $S$ 
is the completion of the Rees ring $Rees(R[\Lambda],(\mu_1))$ \cite[\S4]{Hu12}. In
particular, 
\begin{equation}\label{connK}
(1-\mu_1y_{\lambda})(1-\mu_1 y_{\nu})=1-\mu_1y_{\lambda+\nu}\,.
\end{equation}

(4) {\em Elliptic cohomology $Ell^*(\cdot)$}. The corresponding formal group law is the group law
of an elliptic curve; see \cite{AHS01,MR07} and Section \ref{ellfgl}.

(5) {\em Complex cobordism $MU^*(\cdot)$}. The base ring $MU^*(\mbox{pt})$ is the {\em Lazard ring} ${\mathbb L}$, and the corresponding formal group law is the universal one. A celebrated result states that the Lazard ring is a polynomial ring (over the integers) in infinitely many variables; see \cite{Haz}, cf. also \cite{lensff}. 
\end{examples}

\begin{rem}\label{normalize}
Observe that if $F(x,y)=x+y+a_{11}xy+\ldots$, where $a_{11}\in
R^\times$, we can always normalize $F$, hence we can assume $a_{11}=-1$, cf. Examples~\ref{exfgl}~(2),(3). 
The normalization of formal group laws is discussed in detail in \cite{Zh}.
\end{rem}

As it was mentioned above, any complex oriented theory gives rise to a formal group law via the Quillen's observation. Unfortunately, the opposite fails in general: there are examples of formal group laws for which
the respective complex oriented cohomology theories simply do not exist. 
However, if one translates and extends the axiomatics of oriented theories 
into the algebraic context, which was done
by Levine and Morel in \cite{levmor-book}, then to any formal group law one can associate (by tensoring with algebraic cobordism over the Lazard ring) the respective \textit{algebraic} oriented cohomology theory.
Moreover, it was shown in \cite{CPZ} and \cite{CZZ}  that one can completely reconstruct the (algebraic) $T$-equivariant oriented cohomology ring of a flag variety starting with the formal group law and the root system only.

In the present paper we will work in this more general (algebraic) setting, assuming that $E^*_T(\cdot)$ stands for
the respective algebraic oriented cohomology.

\subsection{The hyperbolic formal group law}\label{ellfgl} 
We now introduce the key example of the present paper. Consider an elliptic curve given in Tate coordinates by 
$(1-\mu_1t-\mu_2t^2)s=t^3$. It was shown in \cite[Example 63]{BB10} and  \cite[Cor. 2.8]{BB11} that the respective formal group law is  
\begin{equation}\label{defell}
F(x,y)=\frac{x+y-\mu_1 xy}{1+\mu_2xy}\,,
\end{equation}
defined over $R=\zz[\mu_1,\mu_2]$. Note that the formal inverse is the same as for the multiplicative formal group law $F_m(x,y)=x+y-\mu_1 x y$, see Example~\ref{exfgl}~(3). 

This formal group law is, in fact, a very natural one from a topological perspective. It implicitly appeared  in Hirzebruch's celebrated book \cite{Hir} 
devoted to the proof of the Riemann-Roch theorem. Indeed, the book is centered on the notion of a
``(2-parameter) virtual generalized Todd genus''. It turns out that the associated formal group law (via the one-to-one correspondence between genera and formal group laws) is precisely the one in \eqref{defell}. The Hirzebruch genera and their relation to the mentioned formal group law were studied  in \cite{BB10,BB11}. 

Let us also mention some important special cases. 
In the trivial case $\mu_1=\mu_2=0$, $F(x,y)=F_a(x,y)$ is the additive group law, which corresponds to ordinary cohomology $H^*(\cdot)$.
In general, Hirzebruch sets $\mu_1=\alpha+\beta$  and $\mu_2=-\alpha\beta$. He observes that if $\mu_2=0$, then his genus is the usual Todd genus
(corresponding to $K$-theory, i.e., to the multiplicative formal group law $F_m(x,y)$). The case $\alpha=-\beta$, i.e., $\mu_1=0$, is related to the Atiyah-Singer signature, corresponding to the {\em Lorentz formal group law}. The case $\alpha=\beta$ corresponds to the Euler characteristic.  

The exponential of the formal group law $F(x,y)$ in \eqref{defell} is given by the rational function 
$$\exp_F(x)=\frac{e^{\alpha x}-e^{\beta x}}{\alpha e^{\alpha x}-\beta e^{\beta x}}$$ 
in $e^x$; this suggests that we call $F(x,y)$ a \textit{hyperbolic} formal group law and denote it by $F_h(x,y)$. Observe that $F_h(x,y)$ gives rise to a certain algebraic elliptic cohomology theory $Ell^*(\cdot)$. However,
if one sets $\alpha=\tfrac{t}{t+t^{-1}}$ and $\beta=\tfrac{t^{-1}}{t+t^{-1}}$ for $R=\mathbb{Z}[t,t^{-1},\tfrac{1}{t+t^{-1}}]$, then $F_h(x,y)$ does not correspond
to a complex oriented theory \cite[\S4]{BK91}; this case is important because the corresponding formal Demazure algebra (see Section~\ref{fda}) is isomorphic to the Hecke algebra.

For the theoretical foundations of elliptic Schubert calculus, we refer to \cite{Ga}.

\subsection{The structure of $E^*_T(G/B)$}\label{strucetgb} Given a Weyl group element $w$, consider a reduced word ${I_w}=(i_1,\ldots,i_l)$ for it, so $w=s_{i_1}\ldots s_{i_l}$. There is a {\em Bott-Samelson resolution} of the corresponding Schubert variety $X_w$, which we denote by $\gamma_{I_w}:\Gamma_{I_w}\rightarrow X_w\hookrightarrow G/B$. This determines a so-called {\em Bott-Samelson class} in  $E^*_T(G/B)$ via the corresponding pushforward map, namely $(\gamma_{I_w})_{!}(1)$. Here we let 
\begin{equation}\label{zetaw}\zeta_{I_w}:=(\gamma_{I_w^{-1}})_{!}(1)\,,\end{equation}
 where $I_w^{-1}:=(i_l,\ldots,i_1)$ is a reduced word for $w^{-1}$; we use $I_{w^{-1}}$, rather than $I_w$, in order to simplify the correspondence with the setup in \cite{Bi99,wilcke}, which is used in Sections~\ref{afrp} and \ref{bscvrp}. Note that $\zeta_{\emptyset}$ is the class of a point (where $\emptyset$ denotes the reduced word for the identity).

It is well-known \cite{BE90} that the Bott-Samelson classes are independent of the corresponding reduced words only for cohomology and $K$-theories (we can say that connective $K$-theory is the ``last'' case when this happens). In these cases, the Bott-Samelson classes are the {\em Schubert classes}, and they form bases of $H_T^*(G/B)$ and $K_T(G/B)$ over the corresponding ring $S$, as $w$ ranges over $W$. (More precisely, the Schubert classes are the Poincar\'e duals to the fundamental classes of Schubert varieties in homology, whereas in $K$-theory they are the classes of {\em structure sheaves} of Schubert varieties.) More generally, an important result in generalized cohomology Schubert calculus says that, by fixing a reduced word $I_w$ for each $w$, the corresponding Bott-Samelson classes $\{\zeta_{I_w}\,:\,w\in W\}$ form an $S$-basis of $E_T^*(G/B)$. 

There is a well-known model for $E_T^*(G/B)$ known as the {\em Borel model}, which we now describe. We start by considering the invariant ring 
\[S^W:=\{f\in S\,:\, wf=f \mbox{ for all $w\in W$}\}\,.\]
We then consider the coinvariant ring
\[S\otimes_{S^W} S:=\frac{S\otimes_R S}{\langle f\otimes 1-1\otimes f\,:\, f\in S^W\rangle}\,.\]
Here the product on $S\otimes_{S^W} S$ is given by $(f_1\otimes g_1)(f_2\otimes g_2):=f_1f_2\otimes g_1 g_2$. To more easily keep track of the left and right tensor factors, we set $x_\lambda:=1\otimes y_\lambda$ and $y_\lambda:=y_\lambda\otimes 1$. We use this convention whenever we work with a tensor product of two copies of $S$; by contrast, when there is a single copy of $S$ in sight, we let $x_\lambda=y_\lambda$. 
 
We are now ready to state a second important result in generalized cohomology Schubert calculus, namely that $S\otimes_{S^W} S$ is a rational model for $E_T^*(G/B)$, as an $S$-module; here the action of $y_\lambda\in S$ is on the left tensor factor, as the above notation suggests. 
Observe that, in general, $E_T^*(G/B)$ and  $S\otimes_{S^W} S$ are not isomorphic integrally (see \cite[Theorem~11.4]{CZZ}).

\subsection{The formal Demazure algebra}\label{fda} Following \cite[\S6]{HMSZ} and \cite[\S3]{CZZ}, consider the
localization $Q$ of $S$ along all $x_\alpha$, for $\alpha\in \Phi$ (note the change of notation, from $y_\lambda$ to $x_\lambda$, cf. the above convention),
and define the {\em twisted group algebra} $Q_W$  to be the smash product
$Q\# R[W]$, see \cite[Def.~6.1]{HMSZ}. More precisely, as an $R$-module, $Q_W$ is $Q\otimes_R R[W]$, while the
multiplication is given by
\begin{equation}\label{commrel}
q\delta_w \cdot q'\delta_{w'}=q(wq')\delta_{ww'},\qquad q,q'\in Q,\;\;
w,w'\in W.
\end{equation}

For simplicity, we denote
$\delta_i:=\delta_{s_i}$, $x_{\pm i}:=x_{\pm\alpha_i}$, and $x_{\pm i\pm j}:=x_{\pm\alpha_i\pm\alpha_j}$, for $i,j\in\{1,\ldots,n\}$; similarly for the $y$ variables.
Following \cite[Def.~6.2]{HMSZ} and \cite{CZZ1}, for each $i=1,\ldots, n$,
we define in $Q_W$
\begin{equation}\label{defxy}
X_i:=\frac{1}{x_i}\delta_i-\frac{1}{x_i}=\frac{1}{x_i}(\delta_i-1),\qquad
Y_i:=X_i+\kappa_i=\frac{1}{x_{-i}}+\frac{1}{x_i}\delta_i=(1+\delta_i)\frac{1}{x_{-i}}\,,
\end{equation}
where $\kappa_i$ is defined in \eqref{defkij}. We call $X_i$ and $Y_i$ the {\em Demazure} and the {\em push-pull element}, respectively. (Note that here, compared to \cite[Def.~6.2]{HMSZ} and \cite{CZZ1}, we changed the sign of $X_i$.) 
The $R$-algebra $\mathbf{D}_F$ generated by multiplication with elements of $S$ and the
elements $\{X_i\,:\,i=1,\ldots, n\}$, or 
$\{Y_i\,:\,i=1,\ldots, n\}$, is called the {\em formal affine Demazure algebra}. Observe that its dual $\mathbf{D}_F^\star$ serves as an integral model for $E_T^*(G/B)$.

The algebras $Q_W$ and $\mathbf{D}_F$ act on $S\otimes_R Q$ by
\begin{equation}\label{action}h(f\otimes g)=f\otimes hg\;\;\;\;\;\mbox{and}\;\;\;\;\;\delta_w(f\otimes g):=f\otimes wg \,,\end{equation}
where $f\in S$, $g,h\in Q$, and $w\in W$. In fact, the Demazure and push-pull elements act on $S\otimes_R S$ and $S\otimes_{S^W} S$.

\begin{examples}\label{exxy} We provide the explicit form of the action of $X_i$ and $Y_i$ on $S$ in several cases. In general, it is easiest to express the action of $X_i$ first, based on Examples~\ref{exfgl}, and then use $Y_i=X_i+\kappa_i$, with $\kappa_i$ calculated in Examples~\ref{exkij}.

(1) {\em Ordinary cohomology}. We have
\[X_i\,f=Y_i\,f=\frac{f-s_if}{\alpha_i}\,.\]

(2) {\em $K$-theory}. We have
\[X_i\,f=\frac{f-s_if}{e^{-\alpha_i}-1}\,,\qquad Y_i\,f=\frac{f-e^{\alpha_i}s_if}{1-e^{\alpha_i}}\,.\]

(3) {\em Connective $K$-theory}. We have
\[X_i\,f=\frac{f-s_if}{-x_i}\,,\qquad Y_i\,f=\frac{f-(1-\mu_1 x_{-i})s_if}{x_{-i}}\,.\]

(4) {\em The hyperbolic formal group law}. The same formulas as for connective $K$-theory apply.
\end{examples}

Let $m_{ij}$ be the order of $s_is_j$ in $W$, and define the following elements in $S$:
\begin{equation}\label{defkij}
\kappa_i:=\frac{1}{x_{-i}}+\frac{1}{x_i}\,,\qquad \kappa_{i,j}:=\frac{1}{x_{-i-j}}\left(\frac{1}{x_{-j}}-\frac{1}{x_{i}}\right)-\frac{1}{x_{-i}x_{-j}}\,.
\end{equation}
Note that the above definition of $\kappa_{i,j}$ differs from the one in \cite[Eq.~6.4]{HMSZ} by a sign change of both $i$ and $j$; the reason is that the mentioned paper gives the relations between the Demazure elements $X_i$, whereas here we need the relations between the push-pull elements $Y_i$, which we now discuss. 
According to \cite[Thm.~6.14]{HMSZ} and \cite[Prop.~8.10]{HMSZ}, the following relations hold in the algebra $\mathbf{D}_F$.
\begin{itemize}
\item[(a)] For all $i$, we have
\begin{equation}\label{square}
Y_i^2=\kappa_i Y_i\,.\end{equation}  
\item[(b)] If $\langle\alpha_i,\alpha_j^\vee\rangle=0$, so that $m_{ij}=2$, then 
\begin{equation}\label{braid2}Y_iY_j=Y_jY_i\,.
\end{equation}
\item[(c)] If $\langle\alpha_i,\alpha_j^\vee\rangle=\langle\alpha_j,\alpha_i^\vee\rangle=-1$, so that $m_{ij}=3$, then 
\begin{equation}\label{braid3}Y_iY_jY_i-Y_jY_iY_j=\kappa_{j,i}Y_j-\kappa_{i,j}Y_i\,.\end{equation} 
\item[(d)] If $\langle\alpha_i,\alpha_j^\vee\rangle=-2$ and $\langle\alpha_j,\alpha_i^\vee\rangle=-1$, so that $m_{ij}=4$, then 
\begin{align}\label{braid4}Y_iY_jY_iY_j-Y_jY_iY_jY_i=&(\kappa_{j,i}+\kappa_{i+2j,-j})Y_jY_i-(\kappa_{i,j}+\kappa_{i+j,j})Y_iY_j\\
&+\left(Y_i(\kappa_{i,j}+\kappa_{i+j,j})\right)Y_j-\left(Y_j(\kappa_{j,i}+\kappa_{i+2j,-j})\right) Y_i\,.
\nonumber\end{align}
\end{itemize}
There is also a more involved relation if $m_{ij}=6$, given in \cite[Prop.~6.8~(d)]{HMSZ}. This relation as well as  \eqref{braid3} and \eqref{braid4} are called {\em twisted braid relations}. 

\begin{examples}\label{exkij} (1) {\em The additive formal group law}. We have $\kappa_i=0$, so $Y_i^2=0$. Moreover, all $\kappa_{i,j}$ are $0$, so the twisted braid relations are the usual braid relations. 

(2) {\em The multiplicative formal group law $F_m(x,y)=x+y-\mu_1xy$}. We have $\kappa_i=\mu_1$ for all $i$, so $Y_i^2=\mu_1 Y_i$. On another hand, all $\kappa_{i,j}$ are again $0$. 

(3) {\em The hyperbolic formal group law {\rm \eqref{defell}}}. It is easy to see that we still have $\kappa_i=\mu_1$ for all $i$. On another hand, based on the calculation \eqref{calckij} below, we have $\kappa_{i,j}=\mu_2$ for all $i,j$.   So \eqref{braid3} and \eqref{braid4} become the following ones, respectively:
\begin{align}
&Y_iY_jY_i-Y_jY_iY_j=\mu_2(Y_j-Y_i)\,,\label{braid3ell}\\
&Y_iY_jY_iY_j-Y_jY_iY_jY_i=2\mu_2(Y_jY_i-Y_iY_j)\,.
\label{braid4ell}
\end{align}
Moreover, the parameters $\xi_{i,j}$ in the twisted braid relation for $m_{ij}=6$, given in \cite[Prop.~6.8~(d)]{HMSZ}, is equal to $3(\mu_2)^2$, see \cite{LNZ}.
\end{examples}

Let us justify that $\kappa_{i,j}=\mu_2$ in the case of the hyperbolic formal group law. By plugging $x_{i}=\iota(x_{-i})$ and $x_{-i-j}=F_m(x_{-i},x_{-j})$ into \eqref{defkij}, cf. Example~\ref{exfgl}~(3), we have
\begin{equation}\label{calckij}
\kappa_{i,j}=\frac{1+\mu_2x_{-i}x_{-j}}{x_{-i}+x_{-j}-\mu_1x_{-i}x_{-j}}\left(\frac{1}{x_{-j}}-\frac{\mu_1x_{-i}-1}{x_{-i}}\right)-\frac{1}{x_{-i}x_{-j}}=\mu_2\,.
\end{equation}

Given a reduced word $I_w=(i_1,\ldots,i_l)$ for $w\in W$, define $X_{I_w}:=X_{i_1}\ldots X_{i_l}$ and $Y_{I_w}:=Y_{i_1}\ldots Y_{i_l}$. 
By \cite{CZZ1}, if we fix a
reduced word $I_w$ for each $w\in W$, then $\{X_{I_w}\,:\,w\in W\}$ and $\{Y_{I_w}\,:\,w\in W\}$ are bases of the free left
$Q$-module $Q_W$. Note that, in cohomology and $K$-theory, $X_{I_w}$ and $Y_{I_w}$ do not depend on the choice of the reduced word $I_w$ (see Examples~\ref{exkij}~(1),(2)), so we can simply write $X_w$ and $Y_w$. 

A fundamental result in generalized cohomology Schubert calculus states that the Bott-Samelson classes $\zeta_{I_w}$, for ${I_w}=(i_1,\ldots,i_l)$, can be calculated recursively as follows:
\begin{equation}\label{pushpull}
\zeta_{I_w}=Y_{i_l}\ldots Y_{i_1}\,\zeta_{\emptyset}\,.
\end{equation}
where ${\zeta}_{\emptyset}$ is the class of a point ($\zeta_{\emptyset}=x_\Pi f_e$ in the notation of \cite{CZZ1}). By analogy with \eqref{pushpull}, we define the following classes:
\begin{equation}\label{pushpulld}\widetilde{\zeta}_{I_w}:=X_{i_l}\ldots X_{i_1}\,\zeta_{\emptyset}\,;\end{equation}
clearly, these coincide with $\zeta_{I_w}$ for ordinary cohomology.

Given a choice of reduced words $I_w$ for $w\in W$, let $X_{I_w}^*$ and $Y_{I_w}^*$ in the dual Demazure algebra $\mathbf{D}_F^\star$ be the usual duals of $X_{I_w}$ and $Y_{I_w}$, respectively. By \cite[Theorem~12.4]{CZZ1}, $Y_{I_w}^*$ is also dual to $\zeta_{I_w}$  with respect to a standard Poincar\'e-type pairing defined in the mentioned paper; the same is true for $X_{I_w}^*$ and $\widetilde{\zeta}_{I_w}$.

\subsection{The GKM model of equivariant cohomology}\label{gkmsetup}
This setup is summarized in \cite[Theorem~3.1]{GaRa}, which cites \cite{hhhcge, kakecf, CPZ}, see also the seminal paper \cite{gkmeck}. 

In the GKM model, we embed $E_T^*(G/B)$ into $\bigoplus_{w\in W}S$, with pointwise multiplication. 
This comes from the embedding 
\begin{equation}\label{gkmemb}i^*\,:\,E_T^*(G/B)\rightarrow\bigoplus_{w\in W}E_T^*({\rm{pt}})\simeq\bigoplus_{w\in W}S\,,\end{equation}
 where 
\begin{equation}\label{defi}i^*:=\bigoplus_{w\in W} i_w^*\,,\;\;\;\;\mbox{and}\;\;\;\;i_w\,:\,\mbox{pt}\rightarrow G/B\,,\:\mbox{ with $\mbox{pt}\mapsto w^{-1}$}\,.\end{equation}
There is a characterization of the image of this embedding, see e.g. \cite{GaRa}. Note that, in \eqref{defi}, like in the definition \eqref{zetaw} of Bott-Samelson classes, we diverge from the usual definitions in \cite{GaRa} by interchanging the roles of $w$ and $w^{-1}$; the reason is the same: to simplify the correspondence with the setup in \cite{Bi99,wilcke}, which is used in Sections~\ref{afrp} and \ref{bscvrp}. We denote the elements of $\bigoplus_{w\in W}S$ by $(f_w)_{w\in W}$; alternatively, we view them as functions $f:W\to S$. 

Using the Borel model for $E_T^*(G/B)$, we can realize the  GKM map $i^*$ in \eqref{gkmemb} as an embedding of $S\otimes_{S^W} S$ into $\bigoplus_{w\in W} S$. This map can be made explicit as
\begin{equation}\label{inc2}f\otimes g\mapsto (f\cdot(w g))_{w\in W}\,.\end{equation}
Via this map, the action \eqref{action} of the algebras $Q_W$ and $\mathbf{D}_F$ is translated as follows in the GKM model, cf. \cite[Eq.~(3.19),~(3.20)]{GaRa}:
\begin{equation}\label{actiongkm} x_\lambda\cdot 1=(x_{w\lambda})_{w\in W}\,,\qquad \delta_v\,(f_w)_{w\in W}=(f_{wv})_{w\in W}\,.\end{equation}

As now the action of the push-pull operators $Y_i$ is made explicit, we can use \eqref{pushpull} to compute recursively the Bott-Samelson classes $\zeta_{I_w}$ in the GKM model, once we know the class $\zeta_{\emptyset}$. This is given by
\begin{equation}\label{topclass}
(\zeta_{\emptyset})_w=\casetwo{\prod_{\alpha\in\Phi^+}y_{-\alpha}}{w={\rm Id}}{0}{w\ne{\rm Id}}
\end{equation}
In fact, the following more general result holds:
\begin{equation}\label{topclassgen}
(\zeta_{I_v})_w=\casetwo{\prod_{\alpha\in \Phi^+\cap w\Phi^+}y_{-\alpha}}{v=w}{0}{w\not\le v}
\end{equation}

\section{Formal root polynomials and their properties}\label{frpp}

In this section we define {\em formal root polynomials} -- the main object of this paper, -- by extending the definitions in \cite{Bi99,wilcke} in a natural way, and begin their study. The setup is the one above, so it corresponds to an arbitrary formal group law.

\subsection{Definition and basic facts}\label{deffacts}

Consider the ring
$Q_W[[\Lambda]]_F=S\otimes_R Q_W$, where the elements of $S$ on the left (denoted by $y$'s) commute with the elements
of $Q_W$. The formal root polynomials are elements of this ring which depend on a reduced word $I_w=(i_1,\ldots,i_l)$ for a Weyl group element $w$. In fact, we will define two formal root polynomials, corresponding to the Demazure and push-pull operators $X_i$ and $Y_i$ in \eqref{defxy}. It is well-known that $I_w$ induces
a so-called {\em reflection order} on the roots in $\Phi^+\cap w\Phi^-$, namely
\begin{equation}\label{reflord}
\Phi^+\cap w\Phi^-=\{\alpha_{i_1},\,s_{i_1}\alpha_{i_2},\,\ldots,\,s_{i_1}\ldots s_{i_{l-1}}\alpha_{i_l}\}\,.
\end{equation}

\begin{defi}\label{defroot} The formal $Y$-root polynomial corresponding to $I_w$ is
\[
\mathcal{R}^Y_{I_w}:=\prod_{k=1}^{l} h_{i_k}^Y(s_{i_1}\ldots
s_{i_{k-1}}\alpha_{i_k}),\qquad \mbox{where }\,h_i^Y(\lambda)=1-y_{\lambda}Y_i\,.
\]
Similarly, we define the corresponding $X$-root polynomial by
\[
\mathcal{R}^X_{I_w}:=\prod_{k=1}^{l} h_{i_k}^X(s_{i_1}\ldots
s_{i_{k-1}}\alpha_{i_k}),\qquad \mbox{where }\,h_i^X(\lambda)=1+y_{-\lambda}X_i\,.
\]
\end{defi}

\begin{example}\label{exrpa2} For type $A_2$ and $w=s_is_js_i$, $I_w=(i,j,i)$, $i\neq
  j$, we obtain (cf. the notation in Section \ref{fda})
\begin{align*}
{\mathcal R}^Y_{(i,j,i)}
&=(1-y_iY_i)(1-y_{i+j}Y_j)(1-y_jY_i)\\&=-y_i y_{i+j}y_j
Y_iY_jY_i+y_iy_{i+j}Y_iY_j+y_jy_{i+j}Y_jY_i+(\kappa_iy_iy_j-y_i-y_j)
Y_i -y_{i+j} Y_j +1\,.
\end{align*}
\end{example}

Consider a ring homomorphism ${\rm ev}\colon S\otimes_R Q \to Q$ given by
$y_\lambda\mapsto x_{-\lambda}$, for $\lambda\in \Lambda$.
Since $Q_W[[\Lambda]]_F$ is a left $S\otimes_R Q$-module, it induces a homomorphism of left $S\otimes_R
Q$-modules ${\rm ev}\colon Q_W[[\Lambda]]_F\to
Q_W$, which we call the {\em evaluation map}.

\begin{lem}\label{evallem}
We have
\[
{\rm ev}(\mathcal{R}^X_{I_w})=\delta_w\,, \qquad {\rm ev}(\mathcal{R}^Y_{I_w})=\theta_{I_w} \delta_w\,, \quad \mbox{where }\,\theta_{I_w}:=\prod_{k=1}^{l} \theta(s_{i_1}\ldots
s_{i_{k-1}}\alpha_{i_k})\,,\;\;
\theta(\lambda)=\frac{-x_{-\lambda}}{x_\lambda}\in S\,.
\]
Furthermore, $\theta_{I_w}$ does not depend on the choice of the reduced word $I_w$ for $w$, so we write $\theta_w=\theta_{I_w}$.
\end{lem}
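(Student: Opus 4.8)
The plan is to compute $\mathrm{ev}(\mathcal{R}^X_{I_w})$ and $\mathrm{ev}(\mathcal{R}^Y_{I_w})$ by evaluating the product in Definition~\ref{defroot} factor by factor, using the multiplicativity of $\mathrm{ev}$ only after carefully tracking how the left $S$-variables $y_\lambda$ interact with the $Q_W$-factors. The key observation is that $\mathrm{ev}$ is a homomorphism of \emph{left} $S\otimes_R Q$-modules, not of rings, so when we apply it to a product $h_{i_1}^X(\lambda_1)\cdots h_{i_l}^X(\lambda_l)$ we cannot simply evaluate each factor independently: we must first move all $y$-variables to the far left, past the $X_{i_k}$'s (or $Y_{i_k}$'s), and this uses the commutation relation \eqref{commrel} in $Q_W$. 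Concretely, a factor $h_{i_k}^X(\mu) = 1 + y_{-\mu}X_{i_k}$, when pushed to the left through earlier $X$-elements, conjugates $y_{-\mu}$ by the appropriate Weyl-group element; after all such moves the total $y$-coefficient is a product that $\mathrm{ev}$ sends to a product of $x$-variables.

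First I would treat the $X$-case. Using $X_i = \tfrac{1}{x_i}(\delta_i - 1)$, observe that $x_i\cdot X_i = \delta_i - 1$, and more generally for any $\lambda$, $x_\lambda X_i = x_{\lambda} \tfrac{1}{x_i}(\delta_i-1)$. The cleanest route is induction on $l$: write $\mathcal{R}^X_{I_w} = \mathcal{R}^X_{I_{w'}}\cdot h_{i_l}^X(s_{i_1}\cdots s_{i_{l-1}}\alpha_{i_l})$ where $w' = s_{i_1}\cdots s_{i_{l-1}}$, assume $\mathrm{ev}(\mathcal{R}^X_{I_{w'}}) = \delta_{w'}$, and show the last factor contributes exactly $\delta_{s_{i_l}}$ after evaluation. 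The crucial identity is that for $\mu = s_{i_1}\cdots s_{i_{l-1}}\alpha_{i_l}$, the $y_{-\mu}$ in the last factor, after being commuted leftward past $\delta_{w'}$ (which sends $y_{-\mu}\mapsto y_{-w'^{-1}\mu} = y_{-\alpha_{i_l}} = y_{-i_l}$), combines with $X_{i_l}$; and $\mathrm{ev}$ sends $y_{-i_l}\mapsto x_{i_l}$, so $\mathrm{ev}(1 + y_{-\mu}X_{i_l}) $ acting appropriately gives $1 + x_{i_l}X_{i_l} = 1 + (\delta_{i_l}-1) = \delta_{i_l}$. Hence $\mathrm{ev}(\mathcal{R}^X_{I_w}) = \delta_{w'}\delta_{i_l} = \delta_w$ by the reduced-word property. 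One should spell out carefully why the commutation is with $\delta_{w'}$ and not the full $\mathcal{R}^X_{I_{w'}}$ — this is where the module-homomorphism (as opposed to ring-homomorphism) nature of $\mathrm{ev}$ must be used to replace the already-evaluated prefix.

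For the $Y$-case I would run the same induction, now with $Y_i = \tfrac{1}{x_{-i}}(1+\delta_i)$, so $x_{-i}Y_i = 1 + \delta_i$ and thus $\mathrm{ev}$ applied to $1 - y_\mu Y_i$, after the commutation moves $y_\mu\mapsto y_{\alpha_{i_l}} = y_{i_l}$, gives $1 - \tfrac{x_{-i_l}}{x_{-i_l}}\cdot\tfrac{-x_{-i_l}}{x_{i_l}}\cdot\tfrac{1}{x_{-i_l}}(1+\delta_{i_l})$ — more cleanly, $\mathrm{ev}(y_{i_l}) = x_{-i_l}$ and $1 - x_{-i_l}Y_{i_l} = 1 - (1+\delta_{i_l}) = -\delta_{i_l}$, which does not literally match; the correct bookkeeping must extract the factor $\theta(\mu) = -x_{-\mu}/x_\mu$ explicitly. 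So the inductive step should yield $\mathrm{ev}(\mathcal{R}^Y_{I_w}) = \theta_{w'}\delta_{w'}\cdot(\text{evaluation of last factor})$, and one checks the last factor produces $\theta(\mu)\delta_{i_l}$ with $\mu = s_{i_1}\cdots s_{i_{l-1}}\alpha_{i_l}$; note $\theta(\mu)$ lies in $S$ and is $\delta_{w'}$-conjugate-invariant in the relevant sense, or rather $\delta_{w'}$ must be commuted past it, giving $w'\theta(\mu) = \theta(w'^{-1}\mu)$... here I would instead organize the product so that $\theta_{I_w} = \prod_k \theta(s_{i_1}\cdots s_{i_{k-1}}\alpha_{i_k})$ emerges directly from collecting, in left-to-right order, the scalar $\theta$-factor peeled off each $h^Y$-factor as it is evaluated in place (before any commutation), which is the natural order matching the definition of $\theta_{I_w}$.

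Finally, for the independence of $\theta_{I_w}$ from the reduced word: this follows immediately from the already-established formula $\mathrm{ev}(\mathcal{R}^Y_{I_w}) = \theta_{I_w}\delta_w$. Indeed, $\mathcal{R}^Y_{I_w}$ is an element of $Q_W[[\Lambda]]_F = S\otimes_R Q_W$, and $\mathrm{ev}$ is a fixed map; although $\mathcal{R}^Y_{I_w}$ itself may depend on $I_w$ (this dependence is exactly the subject of the rest of Section~\ref{frpp}), the right-hand side $\theta_{I_w}\delta_w$ has a very rigid form: $\{\delta_v : v\in W\}$ is a left $Q$-basis of $Q_W$, so the coefficient of $\delta_w$ is uniquely determined. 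Therefore $\theta_{I_w}$ equals this coefficient and is independent of the choice of reduced word — provided we can show $\mathrm{ev}(\mathcal{R}^Y_{I_w})$ itself is word-independent. But that is not automatic a priori; the cleaner argument is the reverse: the inductive computation \emph{proves} $\mathrm{ev}(\mathcal{R}^Y_{I_w}) = \theta_{I_w}\delta_w$ for \emph{every} reduced word, and simultaneously one can give a direct product formula for $\theta_w$ — e.g. $\theta_w = \prod_{\alpha\in\Phi^+\cap w\Phi^-}\theta(\alpha)$ via the reflection order \eqref{reflord} — which manifestly depends only on the \emph{set} $\Phi^+\cap w\Phi^-$, hence only on $w$. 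I expect the main obstacle to be precisely this bookkeeping in the $Y$-case: keeping straight, at each inductive step, which $\theta$-factor is peeled off, in which order, and verifying it reassembles into $\prod_{k}\theta(s_{i_1}\cdots s_{i_{k-1}}\alpha_{i_k})$ rather than some reindexed or Weyl-conjugated variant. The $X$-case is a clean warm-up because no $\theta$-factors appear.
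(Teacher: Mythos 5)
Your overall architecture is the same as the paper's: induct on $\ell(w)$, peel off the last factor $h_{i_l}(w'\alpha_{i_l})$ with $w'=s_{i_1}\cdots s_{i_{l-1}}$, exploit the fact that ${\rm ev}$ is left $S\otimes_R Q$-linear and compatible with right multiplication by $Q_W$, and get word-independence of $\theta_{I_w}$ from the reflection order \eqref{reflord}, i.e.\ from the fact that $\{s_{i_1}\cdots s_{i_{k-1}}\alpha_{i_k}\}=\Phi^+\cap w\Phi^-$ depends only on $w$ (that last argument is exactly the paper's and is fine). However, the execution has genuine errors that leave the $Y$-case -- the only case where the normalizing parameter must emerge -- unproved. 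First, the mechanism you invoke is wrong: the $y$-variables are central in $Q_W[[\Lambda]]_F=S\otimes_R Q_W$ by construction, so they are \emph{not} conjugated when ``pushed past'' $\delta_{w'}$ or earlier $X$-factors. The conjugation happens only \emph{after} applying ${\rm ev}$: the resulting $x$-variable is moved rightward through $\delta_{w'}$ via \eqref{commrel}, $x_{-w'\alpha_{i_l}}\,\delta_{w'}=\delta_{w'}\,x_{-\alpha_{i_l}}$ (moving rightward applies $w'^{-1}$; note also that your identity $w'\theta(\mu)=\theta(w'^{-1}\mu)$ reverses the action, since $w'\theta(\mu)=\theta(w'\mu)$). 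In the $X$-case you still land on the correct identity $1+x_{i_l}X_{i_l}=\delta_{i_l}$, but the justification as written does not hold up.

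Second, and more seriously, in the $Y$-case you use $Y_i=\tfrac{1}{x_{-i}}(1+\delta_i)$, i.e.\ $x_{-i}Y_i=1+\delta_i$; the correct factorization from \eqref{defxy} is $Y_i=\tfrac{1}{x_{-i}}+\tfrac{1}{x_i}\delta_i=(1+\delta_i)\tfrac{1}{x_{-i}}$, so that $1-x_{-i}Y_i=-\tfrac{x_{-i}}{x_i}\delta_i=\theta(\alpha_i)\,\delta_i$. This wrong factorization is what produced your $-\delta_{i_l}$ and the mismatch you acknowledge, and your proposed repair -- collecting a $\theta$-factor from each $h^Y$-factor ``evaluated in place (before any commutation)'' -- cannot work as stated, because ${\rm ev}$ is only a module homomorphism, not a ring homomorphism: evaluating individual factors inside the product is precisely the illegal move you warned against at the outset. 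With the correct expression for $Y_i$ the induction closes at once, as in the paper: ${\rm ev}\bigl(\mathcal{R}^Y_{I_{w'}}(1-y_{w'\alpha_{i_l}}Y_{i_l})\bigr)=\theta_{w'}\delta_{w'}-x_{-w'\alpha_{i_l}}\theta_{w'}\delta_{w'}Y_{i_l}=\theta_{w'}\delta_{w'}\bigl(1-x_{-i_l}Y_{i_l}\bigr)=\theta_{w'}\delta_{w'}\,\theta(\alpha_{i_l})\,\delta_{i_l}=\theta_{w'}\,\theta(w'\alpha_{i_l})\,\delta_{w'}\delta_{i_l}$, which is exactly where the factor $\theta(\beta_l)$ comes from. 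Until this step is carried out correctly, the formula ${\rm ev}(\mathcal{R}^Y_{I_w})=\theta_{I_w}\delta_w$ is not established.
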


\begin{proof}
Clearly, $\theta_{I_w}$ only depends on the set $\Phi^+\cap w\Phi^-$, and thus only on $w$.
We prove the desired formula for $\mathcal{R}^Y_{I_w}$ by induction on the length of $w$, while the proof for $\mathcal{R}^X_{I_w}$ is completely similar.
For length $1$ we have
\[
{\rm ev}(\mathcal{R}^Y_i)={\rm ev}(1-y_{i}Y_i)=1-x_{-i}\left(\frac{1}{x_{-i}}+\frac{1}{x_i}\delta_i\right)=\frac{-x_{-i}}{x_i}\delta_i=\theta_i\,\delta_i\,.
\]
Now consider $w'\in W$ and $I_{w'}$, ending in $i$. Let $w:=w's_i$, with $\ell(w)=\ell(w')-1$, and let $I_w$ be obtained by removing the last index in $I_{w'}$. By induction, we assume that ${\rm ev}(\mathcal{R}^Y_{I_w})=\theta_w\,\delta_w$. 
By using \eqref{commrel}, we have
\begin{align*}
{\rm ev}(\mathcal{R}^Y_{I_{w'}})&={\rm ev}(\mathcal{R}^Y_{I_w} 
(1-y_{w\alpha_i}Y_i))={\rm ev}\left(\mathcal{R}^Y_{I_w}-y_{w\alpha_i}\mathcal{R}^Y_{I_w}\left(\frac{1}{x_{-i}}+\frac{1}{x_i}\delta_i\right)\right)\\
&=\theta_w\,\delta_w-x_{-w\alpha_i}\,\theta_w\,\delta_w
\left(\frac{1}{x_{-i}}+\frac{1}{x_i}\delta_i\right)=\theta_w\,\delta_w -
\theta_w\,\delta_w\,
x_{-i}\left(\frac{1}{x_{-i}}+\frac{1}{x_i}\delta_i\right)\\
&=\theta_w\,\delta_w \,\theta(\alpha_i) \,\delta_i=\theta_w\,\theta(w\alpha_i)\,\delta_w\, \delta_i=\theta_{ws_i}\,\delta_{ws_i}\,. \qedhere
\end{align*}
\end{proof}

We will call $\theta_w$ the {\em normalizing parameter}. We now consider some special cases, cf. Examples~\ref{exfgl}~(1)--(4), including the conventions therein.

\begin{examples}\label{exeval} (1) {\em Ordinary cohomology}. 
We have $h_i^Y(\lambda)=1+\lambda Y_i$ and $Y_i^2=0$, so the corresponding root
polynomials coincide with the ones of Billey \cite[Definition 2]{Bi99}.
We have $\theta(\lambda)=1$, and therefore ${\rm ev}(\mathcal{R}^Y_{I_w})=\delta_w$.

(2) {\em $K$-theory}. 
We have $h_i^Y(\lambda)=1-(1-e^{-\lambda}) Y_i=1+(e^{-\lambda}-1)Y_i$ and $Y_i^2=Y_i$, so the corresponding root
polynomials coincide with the ones of Willems \cite[Section 5]{wilcke}. 

We have $\theta(\lambda)=\tfrac{1}{1-x_\lambda}=\tfrac{1}{1-(1-e^{-\lambda})}=e^\lambda$. Therefore, the normalizing parameter can be expressed as
\[
\theta_w=e^{\sum_{k=1}^l s_{i_1}\ldots
s_{i_{k-1}}\alpha_{i_k}}=e^{\rho-w\rho}\,,
\]
where $\rho=\tfrac{1}{2}\sum_{\alpha\in\Phi^+}\alpha$, as usual; indeed, $\rho-w\rho$ is the sum of roots in the set $\Phi^+\cap
w\Phi^-$.

(3) {\em Connective $K$-theory}. Recall that, in this case, $\mu_1$ in $F_m(x,y)=x+y-\mu_1xy$ is not invertible in $R$. 
We have
$\theta(\lambda)=\tfrac{1}{1-\mu_1 x_\lambda}$. Therefore, for rank $n$ root systems, we can express the normalizing parameter as follows, based on \eqref{connK}: 
\begin{equation}\label{thetack}
\theta_w=\prod_{k=1}^{l} \frac{1}{1-\mu_1x_{s_{i_1}\ldots
s_{i_{k-1}}\alpha_{i_k}}}=\prod_{i=1}^n\frac{1}{(1-\mu_1x_i)^{m_i}}\,,
\end{equation}
where $m_i$ is the sum of $\alpha_i$-coefficients of the (positive)
roots in $\Phi^+\cap w\Phi^-$.

In particular, let $R=\zz[t]$ be a polynomial ring (with
variable $\mu_1=t$). 
For type $A_2$ with simple roots $\alpha_i$ and $\alpha_j$, we obtain
\[
\theta_{(i,j,i)}=\frac{1}{(1-tx_i)(1-tx_{i+j})(1-tx_j)}=\frac{1}{(1-tx_i)^2(1-tx_j)^2}\,.
\]

(4) {\em The hyperbolic formal group law {\rm \eqref{defell}}}. Since the formal inverse is the same as for the multiplicative formal group law, the normalizing parameter is also given by \eqref{thetack}. If $\mu_1=0$ (giving the Lorentz formal group law), then the case of the additive formal group law applies, i.e., Example~\ref{exeval}~(1), so $\theta_w=1$ and ${\rm ev}(\mathcal{R}^Y_{I_w})=\delta_w$.
\end{examples}

\subsection{Independence of choices of reduced words}\label{indredw}
We now provide necessary and sufficient
conditions for the root polynomials $\mathcal{R}^Y_{I_w}$ and $\mathcal{R}^X_{I_w}$ to be independent
of a choice of $I_w$, 
hence generalizing the respective results of Billey \cite{Bi99} and  Graham-Willems \cite{graekt,wilcke}. For simplicity, we refer only to the $Y$-root polynomial, and let $h_i=h_i^Y$; however, all the results hold for the $X$-root polynomial as well.

\begin{prop}\label{ybecomm} If $\langle\alpha_i,\alpha_j^\vee\rangle=0$, i.e., $m_{ij}=2$, then we have
\[
h_i(\lambda)\,h_j(\nu)=h_j(\nu)\,h_i(\lambda)\text{ for all }\lambda,\nu\in \Lambda\,,
\]
i.e., the $h_i$'s satisfy the commuting relations.
\end{prop}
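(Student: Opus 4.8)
The plan is to verify the commuting relation directly, exploiting the fact that when $\langle\alpha_i,\alpha_j^\vee\rangle=0$ the generators $\delta_i$ and $\delta_j$ commute, $s_i$ fixes $\alpha_j$ and $s_j$ fixes $\alpha_i$, and the push-pull elements satisfy $Y_iY_j=Y_jY_i$ by \eqref{braid2}. First I would expand both products using the definition $h_i(\lambda)=1-y_\lambda Y_i$:
\[
h_i(\lambda)h_j(\nu)=1-y_\lambda Y_i-y_\nu Y_j+y_\lambda Y_i\,y_\nu Y_j\,,
\]
and similarly for $h_j(\nu)h_i(\lambda)$. The first three terms already agree, so everything reduces to showing $y_\lambda Y_i\,y_\nu Y_j=y_\nu Y_j\,y_\lambda Y_i$ inside $Q_W[[\Lambda]]_F$. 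Since the $y$-variables on the left (elements of $S$) commute with all of $Q_W$, this is equivalent to $Y_i\,y_\nu\,Y_j=Y_j\,y_\nu\,Y_i$ as elements of $Q_W$.

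The key computational step is to understand how $Y_i$ moves past the scalar $y_\nu$ (viewed now as $x_\nu\in Q\subset Q_W$ after the convention collapsing the two copies of $S$). Writing $Y_i=\frac1{x_{-i}}+\frac1{x_i}\delta_i$ from \eqref{defxy} and using the commutation rule \eqref{commrel}, one gets $Y_i\,x_\nu=\bigl(\frac{x_\nu}{x_{-i}}+\frac{s_i x_\nu}{x_i}\delta_i\bigr)$; since $s_i\alpha_j=\alpha_j$ the relevant weight $\nu$ (which will be of the form $w\alpha_j$ or similar) is fixed by $s_i$, so $Y_i\,x_\nu=x_\nu Y_i$ — that is, $Y_i$ commutes with $x_\nu$ precisely because $s_i\nu=\nu$. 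Wait: in the statement $\lambda,\nu$ range over \emph{all} of $\Lambda$, not just weights fixed by the reflections, so I cannot simply invoke $s_i\nu=\nu$. I would instead argue more carefully: reduce to showing $Y_i\,x_\nu\,Y_j=Y_j\,x_\nu\,Y_i$ for arbitrary $\nu$, and expand both sides in the $Q$-basis $\{1,\delta_i,\delta_j,\delta_i\delta_j\}$ of the subalgebra generated by $s_i,s_j$ using \eqref{commrel} repeatedly, checking that all four coefficient functions in $Q$ coincide. Here the crucial simplifications are $s_i s_j=s_j s_i$, $s_i\alpha_j^\vee$-pairings vanishing so that $s_i$ and $s_j$ commute with each other's relevant Chern-class data, and the relation $\kappa_i,\kappa_j$ being symmetric under the swap.

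The cleanest route, and the one I would actually write up, avoids brute expansion: apply the evaluation philosophy and the module structure. Note $Q_W[[\Lambda]]_F$ is a free left $Q$-module with basis $\{\delta_w\}$, and the statement $h_i(\lambda)h_j(\nu)=h_j(\nu)h_i(\lambda)$ is an identity of such elements; it suffices to check equality after expanding in the $Q$-basis $\{1,\delta_i,\delta_j,\delta_i\delta_j\}$. Since $Y_i = \frac1{x_i}(\delta_i-1)+\kappa_i$ and everything is supported on the parabolic subgroup $\langle s_i,s_j\rangle$, and since that subgroup is $\zz/2\times\zz/2$ when $m_{ij}=2$, the computation is a finite linear-algebra check: compute the coefficient of each of the four group elements on both sides, using \eqref{commrel} to push $\delta$'s past the $x$'s. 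The main obstacle — and it is genuinely the only subtle point — is bookkeeping the $x_{s_i\lambda}$, $x_{s_j\lambda}$, $x_{s_is_j\lambda}$ factors that appear when $\delta_i,\delta_j$ sweep past the weight variables $y_\lambda,y_\nu$; one must use that $s_i$ and $s_j$ commute (so the order of sweeping is immaterial) together with $\langle\alpha_i,\alpha_j^\vee\rangle=\langle\alpha_j,\alpha_i^\vee\rangle=0$, which guarantees $s_i x_j = x_j$ and $s_j x_i = x_i$ on the simple-root level, so the $\frac1{x_i},\frac1{x_j}$ denominators are not disturbed by the other reflection. Once this symmetric pattern is exhibited, the coefficients on the two sides match termwise and the proof concludes. (The identical argument, replacing $h_i^Y$ by $h_i^X=1+y_{-\lambda}X_i$ and using $X_iX_j=X_jX_i$, handles the $X$-root polynomial.)
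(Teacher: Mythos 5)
Your opening reduction is in fact the entire proof, and it is exactly the paper's argument: expand $h_i(\lambda)h_j(\nu)=1-y_\lambda Y_i-y_\nu Y_j+y_\lambda y_\nu Y_iY_j$ (and likewise in the other order), note that in $Q_W[[\Lambda]]_F=S\otimes_R Q_W$ the $y$-variables lie in the left tensor factor and therefore commute with everything in $Q_W$, and invoke $Y_iY_j=Y_jY_i$ from \eqref{braid2}. Had you stopped there, the proof would be complete (and the same one-liner with $X_iX_j=X_jX_i$ handles $h^X$).

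The rest of the proposal, however, rests on a genuine misreading, and the computation you say you would actually write up does not work. You relocate $y_\nu$ into $Q\subset Q_W$, ``viewed now as $x_\nu$ after the convention collapsing the two copies of $S$.'' That convention applies only when a single copy of $S$ is in sight; here the whole point of $Q_W[[\Lambda]]_F$ is that there are two copies, with the $y$'s central and the $x$'s (inside $Q_W$) not. Your worry about $s_i\nu\neq\nu$ only arises under this misreading, and under that misreading the identity you reduce to, $Y_i\,x_\nu\,Y_j=Y_j\,x_\nu\,Y_i$, is simply false for general $\nu$: using \eqref{commrel}, \eqref{defxy} and $s_i\alpha_j=\alpha_j$, the coefficient of $\delta_i$ in $Y_i x_\nu Y_j$ is $x_{s_i\nu}/(x_i x_{-j})$, while in $Y_j x_\nu Y_i$ it is $x_\nu/(x_i x_{-j})$; these agree only when $s_i\nu=\nu$. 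So the ``cleanest route'' of matching coefficients on $\{1,\delta_i,\delta_j,\delta_i\delta_j\}$ after letting the $\delta$'s sweep past the weight variables cannot succeed as described --- no sweeping occurs in the correct reading, and in the incorrect reading the coefficients do not match. The fix is to delete everything after your first paragraph: centrality of the $y$'s plus \eqref{braid2} is the only input needed.
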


\begin{proof}
We explicitly compute
\[
h_i(\lambda)\,h_j(\nu)=(1-y_{\lambda}Y_i)(1-y_{\nu}Y_j)=1-y_{\lambda}Y_i-y_{\nu}Y_2+y_{\lambda}y_{\nu}Y_iY_j\,.
\]
Since $Y_iY_j=Y_jY_i$, the result follows.
\end{proof}

\begin{prop}\label{ybea} Consider a root system containing a pair of simple roots $\alpha_i,\alpha_j$ with $m_{ij}=3$, which we fix. Also assume that $2$ is not a zero divisor in the coefficient ring $R$. Then the following
  conditions are equivalent.
\begin{itemize}
\item[(a)]
The underlying formal group law is the hyperbolic one {\rm \eqref{defell}}.  
\item[(b)] We have
\begin{equation}\label{allw}h_i(\lambda)\,h_{j}(\lambda+\nu)\,h_i(\nu)=h_{j}(\nu)\,h_i(\lambda+\nu)\,h_{j}(\lambda)\,,\end{equation}
for all $\lambda,\nu\in \Lambda$.
In this case, we say that $h_i$ and $h_j$ satisfy the (type $A$) Yang-Baxter relation for
all weights.
\item[(c)] We have
$$h_i(\alpha_i)\,h_{j}(\alpha_i+\alpha_{j})\,h_i(\alpha_{j})=h_{j}(\alpha_{j})\,h_i(\alpha_i+\alpha_{j})\,h_{j}(\alpha_i)\,.$$
In this case, we say that $h_i$ and $h_j$
satisfy the (type $A$) Yang-Baxter relation for the corresponding simple roots.
\end{itemize}
\end{prop}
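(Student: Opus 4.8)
The plan is to reduce the whole statement to a pair of scalar identities in $S\otimes_R Q$ obtained by expanding the two sides of \eqref{allw} in a convenient basis. Let $W'=\langle s_i,s_j\rangle$ be the rank-two parabolic subgroup; by \cite{CZZ1} the six elements $1,Y_i,Y_j,Y_iY_j,Y_jY_i,Y_iY_jY_i$, associated to reduced words for the elements of $W'$, are part of a left $Q$-basis of $Q_W$, hence in particular left $S\otimes_R Q$-linearly independent in $Q_W[[\Lambda]]_F$. First I would expand both $h_i(\lambda)h_j(\lambda+\nu)h_i(\nu)$ and $h_j(\nu)h_i(\lambda+\nu)h_j(\lambda)$ by multiplying out, using the quadratic relation \eqref{square} ($Y_i^2=\kappa_iY_i$, $Y_j^2=\kappa_jY_j$) to collapse repeated letters and the twisted braid relation \eqref{braid3} to rewrite $Y_jY_iY_j=Y_iY_jY_i-\kappa_{j,i}Y_j+\kappa_{i,j}Y_i$. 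A short computation of the kind illustrated in Example~\ref{exrpa2} then shows that the coefficients of $1$, $Y_iY_j$, $Y_jY_i$ and $Y_iY_jY_i$ coincide automatically on the two sides, so that \eqref{allw} amounts to the equality of the coefficients of $Y_i$ and of $Y_j$, i.e.\ to the two identities in $S\otimes_R Q$
\begin{gather*}
y_{\lambda+\nu}=y_\lambda+y_\nu-\kappa_i\,y_\lambda y_\nu-\kappa_{i,j}\,y_\lambda y_{\lambda+\nu}y_\nu\,,\\
y_{\lambda+\nu}=y_\lambda+y_\nu-\kappa_j\,y_\lambda y_\nu-\kappa_{j,i}\,y_\lambda y_{\lambda+\nu}y_\nu\,,
\end{gather*}
where $y_{\lambda+\nu}=F(y_\lambda,y_\nu)$; the same computation with $\lambda=\alpha_i$, $\nu=\alpha_j$ gives the analogous pair of identities, now equivalent to condition~(c).

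With this reduction in hand, (a)$\Rightarrow$(b) is immediate: for the hyperbolic law $\kappa_i=\kappa_j=\mu_1$ and $\kappa_{i,j}=\kappa_{j,i}=\mu_2$ by Examples~\ref{exkij}~(3) and \eqref{calckij}, so both identities become $y_{\lambda+\nu}(1+\mu_2\,y_\lambda y_\nu)=y_\lambda+y_\nu-\mu_1\,y_\lambda y_\nu$, which is exactly the defining relation $y_{\lambda+\nu}=F_h(y_\lambda,y_\nu)$ of $S$. The implication (b)$\Rightarrow$(c) is a trivial specialization.

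The substance is (c)$\Rightarrow$(a). Starting from the two identities at $(\lambda,\nu)=(\alpha_i,\alpha_j)$ and subtracting them, one can cancel the non-zero-divisor $y_{\alpha_i}y_{\alpha_j}$ and then use that the $y$-variables of the left tensor factor are independent over $R$ from the $x$-variables entering $\kappa_i,\kappa_j,\kappa_{i,j},\kappa_{j,i}$, together with the vanishing of the constant term of $y_{i+j}$, to conclude $\kappa_i=\kappa_j$ and $\kappa_{i,j}=\kappa_{j,i}$; write $\mu_1$, $\mu_2$ for these common values, a priori in $Q$. Either identity now reads $F(y_{\alpha_i},y_{\alpha_j})\,(1+\mu_2\,y_{\alpha_i}y_{\alpha_j})=y_{\alpha_i}+y_{\alpha_j}-\mu_1\,y_{\alpha_i}y_{\alpha_j}$ in $S$. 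Passing to the associated graded of $S$ for the augmentation filtration and comparing the components in $Sym^2$ and $Sym^3$ of the cotangent space, I would use that the linear parts of $y_{\alpha_i}$ and $y_{\alpha_j}$ — whose coordinates in the basis dual to the fundamental weights are, up to sign, the relevant entries of the Cartan matrix (with $m_{ij}=3$) — give rise to primitive vectors, forcing $\mu_1,\mu_2\in R$; the hypothesis that $2$ is not a zero-divisor in $R$ enters in carrying out this reconstruction. Finally, since $\alpha_i$ and $\alpha_j$ are linearly independent, $\log_F(y_{\alpha_i})$ and $\log_F(y_{\alpha_j})$ span a rank-two subspace of $Sym_{\qq}(\Lambda)$, so $y_{\alpha_i},y_{\alpha_j}$ are algebraically independent over $R$ and the displayed identity upgrades to the identity of power series $F(x,y)(1+\mu_2 xy)=x+y-\mu_1 xy$, i.e.\ $F=F_h$.

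The step I expect to be the main obstacle is this last one: making the passage from the single relation $F(y_{\alpha_i},y_{\alpha_j})=F_h(y_{\alpha_i},y_{\alpha_j})$ in $S$ to an identity of formal group laws rigorous over $R$ itself, rather than only after inverting the relevant integers, and keeping precise track of where the coprimality of the Cartan numbers and the hypothesis on $2$ are genuinely needed. By contrast, the extraction of the two scalar identities from \eqref{allw}, while somewhat lengthy, is purely mechanical.
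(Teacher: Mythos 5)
Your reduction of \eqref{allw} to the vanishing of the coefficients of $Y_i$ and $Y_j$ in the expansion over the basis $\{Y_{I_v}\}$ (using \eqref{square} to collapse $Y_i^2$, $Y_j^2$ and \eqref{braid3} to trade $Y_jY_iY_j$ for $Y_iY_jY_i$ plus lower terms, the coefficients of $1$, $Y_iY_j$, $Y_jY_i$, $Y_iY_jY_i$ cancelling automatically) is exactly the paper's reduction, and the two scalar identities you display are the ones the paper obtains. Your (a)$\Rightarrow$(b) (they become the defining relations of $S$, since $\kappa_i=\kappa_j=\mu_1$ and $\kappa_{i,j}=\kappa_{j,i}=\mu_2$ for \eqref{defell}) and (b)$\Rightarrow$(c) likewise coincide with the paper. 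The problem is the implication (c)$\Rightarrow$(a).

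There your plan is both more roundabout than necessary and, at its crucial last step, has a genuine gap which you yourself flag. The detour through $Q$ is avoidable: the paper uses only \emph{one} of the two identities at $(\lambda,\nu)=(\alpha_i,\alpha_j)$, substitutes $y_{i+j}=y_i+y_j+a_{11}y_iy_j+a_{12}y_iy_j(y_i+y_j)+o(4)$, and compares the degree-$2$ and degree-$3$ parts to get $\kappa_i=-a_{11}$ and $\kappa_{i,j}=-a_{12}$ outright; these are coefficients of $F$, hence already constants in $R$, so no subtraction step to prove $\kappa_i=\kappa_j$, no a priori $Q$-valued $\mu_1,\mu_2$, and no ``reconstruction'' forcing them into $R$ is needed. (Your appeal to the hypothesis on $2$ at that point is also misplaced: the relevant quadratic and cubic forms have unit content --- e.g.\ the coefficients of $y_{\omega_i}^2$ and $y_{\omega_i}y_{\omega_j}$ in the product of the linear parts are $-2$ and $5$, and $-2$, $3$ occur in the cubic --- so the coefficient extraction works over any $R$.) What remains after this identification is the relation $F(y_i,y_j)(1-a_{12}y_iy_j)=y_i+y_j+a_{11}y_iy_j$ in $S$, and the only nontrivial input is that $y_{\alpha_i},y_{\alpha_j}$ are independent variables of $S$, i.e.\ that the substitution homomorphism $R[[x,y]]\to S$ is injective, so the relation upgrades to $F=F_h$ over $R$ itself. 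This integral statement is precisely what your proposed justification does not deliver: arguing via $\log_F$ and $\mathrm{Sym}_{\mathbb{Q}}(\Lambda)$ at best gives algebraic independence after tensoring with $\mathbb{Q}$ (and algebraic independence of the images is in any case weaker than injectivity on power series), so for a base ring $R$ with torsion your argument proves nothing; you acknowledge this as the ``main obstacle,'' and as written it is an unclosed gap rather than a completed proof.
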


\begin{proof}
We have
\begin{align*}
&h_i(\lambda)\,h_{j}(\lambda+\nu)\,h_i(\nu)-h_{j}(\nu)\,h_i(\lambda+\nu)\,h_{j}(\lambda)
\\
=&(1-y_{\lambda}Y_i)(1-y_{\lambda+\nu}Y_j)(1-y_{\nu}Y_i)-(1-y_{\nu}Y_j)(1-y_{\lambda+\nu}Y_i)(1-y_{\lambda}Y_j)\,.
\end{align*}
Applying the relations \eqref{square} and \eqref{braid3} in $\mathbf{D}_F$, this can be rewritten as follows:
\begin{align*}
&(y_{\lambda+\nu}-y_{\lambda}-y_{\nu})(Y_i-Y_j)+y_{\lambda}y_{\nu}(\kappa_iY_i-\kappa_jY_j)+y_{\lambda}y_{\lambda+\nu}y_{\nu}(Y_jY_iY_j-Y_iY_jY_i)
\\
=&\left(y_{\lambda+\nu}(1+y_{\lambda}y_{\nu}\kappa_{i,j})-(y_{\lambda}+y_{\nu}-y_{\lambda}y_{\nu}\kappa_i)\right)Y_i\\
&-\left(y_{\lambda+\nu}(1+y_{\lambda}y_{\nu}\kappa_{j,i})-(y_{\lambda}+y_{\nu}-y_{\lambda}y_{\nu}\kappa_j)\right)Y_j\,.
\end{align*}

Recall that, if we choose a
reduced word $I_v$ for each $v\in W$, then $\{Y_{I_v}\,:\,v\in W\}$ is a basis of the free left
$Q$-module $Q_W$. In particular, it is also a basis of the free left
$S\otimes_R Q$-module $Q_W[[\Lambda]]_F$. Since $Y_i$ and $Y_j$ are
basis elements (there is only one choice of a reduced word for an
element of length one), $h_i$ and $h_j$ satisfy the Yang-Baxter relation for all weights if and only
we have
\begin{equation}\label{expr}
y_{\lambda+\nu}(1+y_{\lambda}y_{\nu}\kappa_{i,j})-(y_{\lambda}+y_{\nu}-y_{\lambda}y_{\nu}\kappa_i)=0\quad\text{
for all }\lambda,\nu\in \Lambda.
\end{equation}

(a) $\Longrightarrow$ (b). 
We saw in Example~\ref{exkij}~(3) that, if $F(x,y)$ is the hyperbolic formal group law \eqref{defell}, then $\kappa_i=\mu_1$ and $\kappa_{i,j}=\mu_2$. 
So \eqref{expr} turns into
\[
y_{\lambda+\nu}(1+\mu_2y_{\lambda}y_{\nu})-(y_{\lambda}+y_{\nu}-\mu_1y_{\lambda}y_{\nu})=0,
\]
and, therefore, $h_i$ and $h_j$ satisfy the Yang-Baxter relation for all weights.

(b) $\Longrightarrow$ (c) is obvious.

(c) $\Longrightarrow$ (a). 
Assume that $h_i$ and $h_j$ satisfy the Yang-Baxter relation for the corresponding simple roots,
i.e., that we have
\[
y_{i+j}(1+y_{i}y_{j}\kappa_{i,j})-(y_{i}+y_{j}-y_{i}y_{j}\kappa_i)=0\,,
\]
where $\kappa_i$ and $\kappa_{i,j}$ are the expressions in $x$'s defined in \eqref{defkij}. 
Substituting
$y_{i+j}=y_i+y_j+a_{11}y_iy_j+a_{12}y_iy_j(y_i+y_j)+o(4)$ and
collecting the coefficients, 
we obtain
\[
0=(a_{11}+\kappa_i)y_iy_j +(\kappa_{i,j}+a_{12})y_iy_j(y_i+y_j)+o(4)\,.
\]
Therefore, $\kappa_i=-a_{11}$ and $\kappa_{i,j}=-a_{12}$ have to be constants.
So $F(y_i,y_j)(1-a_{12}y_iy_j)-(y_i+y_j+a_{11}y_iy_j)=0$ and, hence,
$F(x,y)$ is the hyperbolic formal group law (indeed, $y_i$ and $y_j$ are independent variables).
\end{proof}

\begin{rems} (1) Proposition~\ref{ybea} shows that, in the formal Kostant-Kumar language of
  \cite{HMSZ,CZZ,CZZ1}, the Yang-Baxter relation 
corresponds precisely to the hyperbolic formal group law. It also justifies the study of
such formal group laws.

(2) The Yang-Baxter relation satisfied by $h_i$ and $h_j$ for the corresponding simple roots was called by Stembridge the {\em Coxeter-Yang-Baxter relation} \cite{stecyb}. In this unpublished paper, he was able to prove it in a type-independent way in the case of the additive formal group law. His approach does not seem to extend to the hyperbolic case, so we need to consider types $A_2$, $B_2$, and $G_2$ separately, cf. Proposition~\ref{ybeb} below. However, see Remark~\ref{remind}~(2).
\end{rems}

Now recall from \cite{Bi99,wilcke,stecyb} the Yang-Baxter relations of types $B$ and $G$. 

\begin{prop}\label{ybeb} Consider a root system containing a pair of simple roots $\alpha_i,\alpha_j$ with $m_{ij}=4$ or $m_{ij}=6$, which we fix. Also assume that the underlying formal group law is the hyperbolic one. Then $h_i$ and $h_j$ satisfy the (type $B$, resp. type $G$)  Yang-Baxter relation for all weights, cf. {\rm \eqref{allw}}; in type $B$, this means
\begin{equation}\label{byb}h_i(\lambda-\nu)\,h_j(\lambda)\,h_i(\lambda+\nu)\,h_j(\nu)=
h_j(\nu)\,h_i(\lambda+\nu)\,h_j(\lambda)\,h_i(\lambda-\nu)\,,\end{equation}
for all $\lambda,\nu\in \Lambda$.
\end{prop}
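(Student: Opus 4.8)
The plan is to follow the same strategy that worked for the type $A_2$ case in Proposition~\ref{ybea}, adapted to the longer braid relations \eqref{braid4} and its $m_{ij}=6$ analogue. First I would expand the left-hand and right-hand sides of the desired Yang-Baxter identity \eqref{byb} (and the $G_2$ analogue) as products of the linear factors $h_i(\mu)=1-y_\mu Y_i$, obtaining an element of $Q_W[[\Lambda]]_F$ written as an $S\otimes_R Q$-linear combination of words in $Y_i$ and $Y_j$. Using the quadratic relation \eqref{square} (here $Y_i^2=\mu_1 Y_i$, $Y_j^2=\mu_1 Y_j$) one reduces every word to one with no repeated consecutive letter, i.e.\ to alternating words $Y_i, Y_j, Y_iY_j, Y_jY_i, Y_iY_jY_i,\ldots$ of length at most $m_{ij}$. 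Then the twisted braid relation -- \eqref{braid4ell} for $m_{ij}=4$, and the $m_{ij}=6$ relation with parameters $\xi_{i,j}=3\mu_2^2$ mentioned after Example~\ref{exkij} -- lets one rewrite the top-length words, so that the difference LHS minus RHS becomes an $S\otimes_R Q$-combination of the basis elements $\{Y_{I_v}\}$ of $Q_W[[\Lambda]]_F$. It then suffices to check that every coefficient vanishes identically in $y_\lambda,y_\nu$ once one substitutes $y_{\lambda\pm\nu}$, $y_{2\lambda\pm\nu}$, etc.\ via the hyperbolic formal group law \eqref{defell}; since these are polynomial (indeed rational) identities in the independent variables $y_\lambda,y_\nu$, they can be verified by a finite symbolic computation.

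A cleaner way to organize the same argument, which I would mention, is to reduce to the rank-$2$ root systems $B_2$ and $G_2$ directly: by Proposition~\ref{ybea}~(a)$\Rightarrow$(b) the hyperbolic law already guarantees the type-$A$ Yang-Baxter relation, so we only need the two new cases. In each of $B_2$ and $G_2$ one has a completely explicit description of $\Phi^+\cap w_\circ\Phi^-$ and of the evaluation map, and the identity \eqref{byb} is, after applying ${\rm ev}$, simply the statement $\theta_{w_\circ}\delta_{w_\circ}=\theta_{w_\circ}\delta_{w_\circ}$ from Lemma~\ref{evallem} for two reduced words of $w_\circ$ -- but of course we need the stronger statement before evaluation, in $Q_W[[\Lambda]]_F$. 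So the genuine content is the coefficient-by-coefficient check in the twisted group algebra, and I would carry it out using the explicit $\kappa_i=\mu_1$, $\kappa_{i,j}=\mu_2$, and $\xi_{i,j}=3\mu_2^2$ values recorded in Example~\ref{exkij}~(3).

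The main obstacle I expect is purely computational bulk: for $m_{ij}=6$ the products have twelve linear factors, the twisted braid relation is the unwieldy one from \cite[Prop.~6.8~(d)]{HMSZ}, and there are many alternating words to track, each with a coefficient that is a rational function in $y_\lambda,y_\nu$ after the hyperbolic substitution. There is no conceptual difficulty -- everything is forced once one trusts the braid relations and the basis property of $\{Y_{I_v}\}$ -- but keeping the bookkeeping correct is delicate, and I would likely delegate the $G_2$ verification to a computer algebra system (as the reference to \cite{LNZ} for the value $3\mu_2^2$ already suggests), presenting only the $B_2$ computation, or a representative coefficient, in detail. A secondary point worth isolating as a lemma is that, just as in the proof of Proposition~\ref{ybea}, one may check the identity for the simple roots $\alpha_i,\alpha_j$ only and then deduce it for all weights $\lambda,\nu$, since the coefficients obtained are polynomial expressions that vanish identically as soon as they vanish for the independent pair $(y_i,y_j)$; this reduces \eqref{byb} to a single identity rather than a family.
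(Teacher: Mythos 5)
Your proposal is correct and follows essentially the same route as the paper's proof: expand both sides, reduce using $Y_i^2=\mu_1 Y_i$ and the twisted braid relations with $\kappa_{i,j}=\mu_2$ (resp.\ $\xi_{i,j}=3\mu_2^2$), and check that the coefficients in the basis $\{Y_{I_v}\}$ vanish via the hyperbolic formal group law identity $y_\lambda+y_\nu-\mu_1 y_\lambda y_\nu=y_{\lambda+\nu}(1+\mu_2 y_\lambda y_\nu)$, delegating the $m_{ij}=6$ case to a computer. The only difference is cosmetic: the paper observes that in the $B_2$ case it suffices to check the coefficients of $Y_iY_j$ and $Y_jY_i$, whereas you propose checking all basis coefficients, which is the same computation with slightly more bookkeeping.
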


\begin{proof}
Let $i,j$ be such that $\langle\alpha_i,\alpha_j^\vee\rangle=-2$ and $\langle\alpha_j,\alpha_i^\vee\rangle=-1$. We can rewrite \eqref{byb} as
\begin{align*}
&(1-y_{\lambda-\nu}Y_i)(1-y_{\lambda}Y_j)(1-y_{\lambda+\nu}Y_i)(1-y_{\nu}Y_j)-\\
&-(1-y_{\nu}Y_j)(1-y_{\lambda+\nu}Y_i)(1-y_{\lambda}Y_j)(1-y_{\lambda-\nu}Y_i)=0\,.
\end{align*}
It is enough to show that the coefficients of $Y_iY_j$ and $Y_jY_i$ in the left hand side cancel. We will focus on $Y_iY_j$, as the case of $Y_jY_i$ is identical. 

We will use the fact that, by \eqref{square} and Example~\ref{exkij}~(3), for the hyperbolic formal group law we have $Y_i^2=\mu_1Y_i$, $Y_j^2=\mu_1 Y_j$, while the twisted braid relations are \eqref{braid4ell}. 
Hence, the desired coefficient of $Y_iY_j$ is calculated as follows, after grouping terms appropriately:
\[y_{\lambda+\nu}(y_\nu-y_\lambda)+y_{\lambda-\nu}(y_\lambda+y_\nu-\mu_1y_\lambda y_\nu)-\mu_1y_\nu y_{\lambda+\nu}y_{\lambda-\nu}-2\mu_2y_\lambda y_\nu y_{\lambda+\nu}y_{\lambda-\nu}\,.\]
We use the following relation between the $y$ variables (expressed in terms of the formal group law, see \eqref{defS}) in order to rewrite the second bracket:
\[y_\lambda+y_\nu-\mu_1y_\lambda y_\nu=y_{\lambda+\nu}(1+\mu_2 y_\lambda y_\nu)\,.\]
After doing this, dividing through by $y_{\lambda+\nu}$, and then canceling and regrouping terms, we obtain
\[(y_{\lambda-\nu}+y_\nu-\mu_1y_{\lambda-\nu} y_\nu)-y_\lambda(1+\mu_2 y_{\lambda-\nu} y_\nu)\,.\]
But this is equal to $0$, by using the relations between the $y$ variables a second time. 

The case $m_{ij}=6$ was checked with the help of a computer.
\end{proof}

\begin{rem} Let us comment on the reciprocal of the statement in Proposition~\ref{ybeb} in type $B$, cf. Proposition~\ref{ybea}. Following the same argument as in the proof of the implication (c) $\Longrightarrow$ (a) in the latter proposition (in type $A$), but after more extensive computations, we can show that, if $h_i$ and $h_j$ satisfy the type $B$  Yang-Baxter relation for the corresponding simple roots, then the corresponding formal group law $F(x,y)=\sum_{i,j} a_{ij}x^iy^j$ satisfies
\[\kappa_i=\kappa_j=-a_{11}\,,\qquad\kappa_{i+j,j}+\kappa_{ij}=\kappa_{i+2j,-j}+\kappa_{ji}=-2 a_{12}\,.\]
This fact then gives the following condition for $F(x,y)$, where $[2]y:=F(y,y)$:
\[(-1+2a_{12}xy)F(x,y)F(x,[2]y) + y(1+a_{11}x)F(x,[2]y) + x(1+a_{11}y)F(x,y) + xy = 0\,.\]
It would be interesting to see if there are other formal group laws beside the hyperbolic one which satisfy this condition and, if so, whether the corresponding type $B$ Yang-Baxter relation holds. 
\end{rem}

\begin{theo}\label{ind}
The root polynomial $\mathcal{R}^Y_{I_w}$ does not depend on the choice of
$I_w$ if the underlying formal group law $F(x,y)$ is the hyperbolic one; so we can write ${\mathcal R}^Y_w$ instead. The reciprocal holds if the corresponding root system contains a pair of simple roots $\alpha_i,\alpha_j$ with $m_{ij}=3$.
\end{theo}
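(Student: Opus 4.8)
The plan is to reduce the independence of $\mathcal{R}^Y_{I_w}$ from $I_w$ to the local moves relating any two reduced words. By the classical result of Tits (Matsumoto's theorem), any two reduced words for $w$ are connected by a sequence of braid moves, each of which replaces an alternating factor $\ldots s_i s_j s_i \ldots$ (of length $m_{ij}$) by $\ldots s_j s_i s_j \ldots$ within the word. Since the factor $h_{i_k}^Y(s_{i_1}\cdots s_{i_{k-1}}\alpha_{i_k})$ only depends, through the reflection order \eqref{reflord}, on the prefix $s_{i_1}\cdots s_{i_{k-1}}$ and on the simple root $\alpha_{i_k}$, it suffices to check that a single braid move leaves $\mathcal{R}^Y_{I_w}$ unchanged. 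Concretely, writing $u=s_{i_1}\cdots s_{i_{k-1}}$ for the prefix up to the alternating block, and setting $\lambda:=u\alpha_i$, $\nu:=u\alpha_j$, one checks that the product of the $m_{ij}$ consecutive $h$-factors coming from $\ldots s_i s_j s_i\ldots$ equals the corresponding product coming from $\ldots s_j s_i s_j\ldots$. Unwinding the reflection order, these two products are exactly the two sides of the (type $A$, $B$, or $G$) Yang-Baxter relation in the weights $\lambda,\nu$ (and their integer combinations); e.g.\ for $m_{ij}=3$ they are the two sides of \eqref{allw}. So the ``if'' direction follows immediately from Propositions~\ref{ybecomm}, \ref{ybea}, and \ref{ybeb}: commuting relations handle $m_{ij}=2$, and the hyperbolic Yang-Baxter relations for all weights handle $m_{ij}=3,4,6$.

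For the reciprocal, suppose the root system contains $\alpha_i,\alpha_j$ with $m_{ij}=3$, and that $\mathcal{R}^Y_{I_w}$ is independent of $I_w$ for all $w$. Take $w=s_is_js_i=s_js_is_j$; then the two reduced words $(i,j,i)$ and $(j,i,j)$ give
\[
h_i(\alpha_i)\,h_j(\alpha_i+\alpha_j)\,h_i(\alpha_j)=\mathcal{R}^Y_{(i,j,i)}=\mathcal{R}^Y_{(j,i,j)}=h_j(\alpha_j)\,h_i(\alpha_i+\alpha_j)\,h_j(\alpha_i)\,,
\]
which is precisely condition (c) of Proposition~\ref{ybea}. (Here we use that $s_i\alpha_j=\alpha_i+\alpha_j$ in rank-two type $A$.) By that proposition — using the standing hypothesis that $2$ is not a zero divisor in $R$ — condition (c) forces the underlying formal group law to be the hyperbolic one, as desired.

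The only genuinely delicate point is the precise bookkeeping in the first paragraph: I must verify that when I perform a braid move on an interior alternating block $(\ldots, i, j, i, \ldots)\to(\ldots, j, i, j, \ldots)$ of $I_w$, (i) the $h$-factors strictly before the block are literally unchanged (clear, since their prefixes are unchanged), (ii) the $h$-factors strictly after the block are also unchanged because the prefix $s_{i_1}\cdots s_{i_{k-1}}$ at each such position is the same group element before and after the move (this uses $\ldots s_i s_j s_i\ldots=\ldots s_j s_i s_j\ldots$, the defining braid relation), and (iii) the product of the $m_{ij}$ factors from the block itself transforms by exactly the Yang-Baxter relation — this requires carefully tracking the reflection order \eqref{reflord} inside the block and recognizing the arguments $\lambda,\nu,\lambda+\nu,\dots$ as $u$ applied to $\alpha_i,\alpha_j$ and their positive combinations. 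I expect step (iii) to be the main obstacle: matching the bracketing of consecutive $h$-factors against the stated forms \eqref{allw} and \eqref{byb} (and the $G_2$ analogue) is where the argument could become notation-heavy, though it is ultimately a direct consequence of the Yang-Baxter propositions already established. Since those propositions cover $m_{ij}\in\{2,3,4,6\}$ and a finite Weyl group has no other bonds, the braid moves of all types are accounted for, completing the ``if'' direction.
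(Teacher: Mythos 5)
Your proposal is correct and follows essentially the same route as the paper: the ``if'' direction reduces, via connectivity of the reduced-word graph under braid moves, to the commuting and Yang-Baxter relations of Propositions~\ref{ybecomm}, \ref{ybea}, \ref{ybeb} (the paper simply cites the arguments of Billey, Stembridge, and Willems for this reduction, while you spell out the bookkeeping), and the reciprocal is exactly the paper's argument comparing $\mathcal{R}^Y_{(i,j,i)}$ with $\mathcal{R}^Y_{(j,i,j)}$ and invoking the implication (c) $\Longrightarrow$ (a) of Proposition~\ref{ybea}. No gaps; your explicit flagging of the $2$-not-a-zero-divisor hypothesis and of the weight relabeling needed in the $m_{ij}=4,6$ blocks only makes explicit what the paper leaves implicit.
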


\begin{proof}
If $F(x,y)$ is the hyperbolic formal group law, then the $h_i$'s satisfy the commuting and the
Yang-Baxter relations, by Propositions \ref{ybecomm}, \ref{ybea}, \ref{ybeb}. Therefore, the proof can use the same arguments as the proofs of \cite[Theorem~2]{Bi99}, \cite[Theorem~2]{stecyb}, and \cite[Theorem~5.2]{wilcke}. Essentially, we use the connectivity of the graph on reduced words for $w$ given by Coxeter relations, and for each such relation we apply the corresponding commuting or Yang-Baxter relation.

Now assume that the corresponding root system contains a pair of simple roots $\alpha_i,\alpha_j$ with $m_{ij}=3$, and that $\mathcal{R}^Y_{I_w}$ does not depend on the choice of $I_w$. In particular, we have
\[
\mathcal{R}^Y_{(i,j,i)}-\mathcal{R}^Y_{(j,i,j)}=h_i(\alpha_i)\,h_j(\alpha_i+\alpha_j)\,h_i(\alpha_j)-h_j(\alpha_j)\,h_i(\alpha_i+\alpha_j)\,h_j(\alpha_i)=0\,,
\]
that is, $h_i$ and $h_j$ satisfy the Yang-Baxter relation for the corresponding simple roots. By Proposition~\ref{ybea}, this is equivalent to $F(x,y)$ being the hyperbolic formal group law.
\end{proof}

\begin{rems}\label{remind} (1) The first part of Theorem~\ref{ind} appears as \cite[Theorem~2]{Bi99} and \cite[Theorem~2]{stecyb}, resp. \cite[Theorem~5.2]{wilcke}, in the case of the additive formal group law, resp. the multiplicative one.

(2) An alternative proof of the first part of Theorem~\ref{ind}, which is uniform for all Lie types, is given in Section \ref{coeffroot}.
\end{rems}

\section{Applications of formal root polynomials in the hyperbolic case}\label{afrp}

\subsection{The coefficients of formal root polynomials}\label{coeffroot}

In this section we present our main application of formal root polynomials in the hyperbolic case, which is continued in the next section.

We start with an arbitrary formal group law $F(x,y)$. 
Consider the root polynomials $\mathcal{R}^Y_{I_w}$ and $\mathcal{R}^X_{I_w}$ as elements of $Q_W[[\Lambda]]_F$.
Recall that, after we fix a reduced word $I_v$ for each $v\in W$, 
the elements $\{Y_{I_v}\,:\,v\in W\}$ and $\{X_{I_v}\,:\,v\in W\}$ form bases of the left $S\otimes_R
Q$-module $Q_W[[\Lambda]]_F$.

\begin{defi}\label{defkos}
Let $K^Y(I_v,I_w)$, resp. $K^X(I_v,I_w)$, denote the coefficient of $Y_{I_v}$ in the expansion of  $\mathcal{R}^Y_{I_w}$, resp. the coefficient of $X_{I_v}$ in the expansion of  $\mathcal{R}^X_{I_w}$, i.e.
\begin{equation}\label{rwexp}
\mathcal{R}^Y_{I_w}=\sum_{v\le w} K^Y(I_v,I_w)\, Y_{I_v}\,,\qquad \mathcal{R}^X_{I_w}=\sum_{v\le w} K^X(I_v,I_w)\, X_{I_v}\,.
\end{equation}
By analogy with the setup of {\rm \cite{Bi99}}, we call $K^Y(I_v,I_w)$ and $K^X(I_v,I_w)$ {\em (normalized) formal Kostant polynomials}.
\end{defi}

From now on, we assume that we are in the hyperbolic case, so by Theorem \ref{ind} we know that the root polynomials do not depend on the choice of a reduced word. However, we will not use this knowledge in this section, so we will continue to use the notation $\mathcal{R}^Y_{I_w}$ and $\mathcal{R}^X_{I_w}$.
By using the twisted braid relations in the hyperbolic case, see Example~\ref{exkij}~(3), we can express $\mathcal{R}^Y_{I_w}$  as a linear
combination of $Y_{I_v}$ with coefficients in $S$ (i.e., in the image
of $S\hookrightarrow 
S\otimes_R Q$). Therefore, $K^Y(I_v,w) \in S$, as an expression only
in the $y$-variables. Similarly for $\mathcal{R}^X_{I_w}$ and $K^X(I_v,w)$. (In general, the twisted braid relations introduce $x$-variables through the coefficients $\kappa_{i,j}$ and $\xi_{i,j}$, see Section \ref{fda}.)

\begin{examples}\label{exbilwil} (1) {\em Ordinary cohomology}. The coefficients $K^Y(I_v,I_w)$ do not depend on the choices of $I_v$ and $I_w$, and we can write Billey's result \eqref{billey-main} below in the following form (see Theorem~\ref{locck}~(1)):
\begin{equation}
\psi^v(w)=K^Y(v,w)\,.
\end{equation}
Essentially, $K^Y(v,w)$ also coincide with the (modified) Kostant polynomials $\widetilde{K}_v({\mathbf O}w)$ in Billey's paper \cite{Bi99}.

(2) {\em $K$-theory}. The coefficients $K^Y(I_v,I_w)$ still do not depend on choices, and we can write Willems' result \eqref{willems-main} below in the following form (see Theorem~\ref{locck}~(1)):
\begin{equation}
\psi^v(w)=\theta_w\,K^Y(v,w)=e^{\rho-w\rho} \,K^Y(v,w)\,.
\end{equation} 
\end{examples}

Observe that the free left $S\otimes_R Q$-module $Q_W[[\Lambda]]_F$ also has a 
basis $\{\delta_w\,:\,w\in W\}$.
Consider the following expansions, cf. \cite[Lemmas~3.1~and~3.2]{CZZ1}:
\begin{equation}\label{expdw}\delta_w=\sum_{v\le
  w}b_{w,I_v}^Y\,Y_{I_v}\,,\qquad \delta_w=\sum_{v\le
  w}b_{w,I_v}^X\,X_{I_v}\end{equation}
   for some coefficients $b_{w,I_v}^Y$ and $b_{w,I_v}^X$ in $S$ (in
$x$'s).

\begin{example} We have 
\begin{equation}\label{bvv}
b_{v,I_v}^Y =b_{v,I_v}^X= \prod_{\alpha \in \Phi^+ \cap v\Phi^-} x_\alpha\,,
\end{equation}
by \cite[Lemmas~3.1~and~3.2]{CZZ1}. Observe that these coefficients do not depend on the choice of $I_v$, so we can denote them by $b_{v,v}^Y=b_{v,v}^X$.
Also note that, in the case of the additive formal group law, these coefficients coincide with the normalizing factor $\pi_v({\mathbf O})$
in Billey's paper \cite{Bi99}. 
\end{example}

By applying the evaluation map ${\rm ev}$ to \eqref{rwexp} and by using Lemma~\ref{evallem}, we obtain
\[
\theta_w\,\delta_w=\sum_{v\le w}{\rm ev}(K^Y(I_v,I_w)) \,Y_{I_v}\,.
\]
Comparing with \eqref{expdw}, we conclude that 
\[b_{w,I_v}^Y=\theta_w^{-1} \,{\rm ev}(K^Y(I_v,I_w))\,;\]
observe that $\theta_w^{-1}\in S$. Also note that, since $K^Y(I_v,I_w)$ is expressed only in terms of $y$-variables, the evaluation map simply changes them to $x$-variables via $y_\lambda\mapsto x_{-\lambda}$. Recall our convention to identify $x_\lambda$ with $y_\lambda$ if there is a single copy of $S$ in sight. Also, we define the sign change involution $*$ on $S$ by $y_\lambda\mapsto y_{-\lambda}$, and we extend it in the obvious way to $Q_W$, i.e., $*$ fixes $\delta_w$. As $\theta_w^{-1}=*\theta_w$, we have proved the following result, while a completely similar reasoning proves its analogue for the $X$-root polynomial.

\begin{theo}\label{mainthm} In the hyperbolic case, we have in $S$:
\[
b_{w,I_v}^Y=*(\theta_w\, K^Y(I_v,I_w))\,,\qquad b_{w,I_v}^X=* K^X(I_v,I_w) \,.\]
In particular, $K^Y(I_v,I_w)$ (resp. $K^X(I_v,I_w)$), and hence ${\mathcal R}_{I_w}^Y$ (resp. ${\mathcal R}_{I_w}^X$), do not depend on the choice of $I_w$, so we can use the notation $K^Y(I_v,w)$ and ${\mathcal R}_{w}^Y$ (resp. $K^X(I_v,w)$ and ${\mathcal R}_{w}^X$) in this case. 
\end{theo}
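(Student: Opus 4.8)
The plan is to derive everything from Lemma~\ref{evallem} together with the expansions in \eqref{expdw}, exactly as the excerpt has already begun to do. First I would apply the evaluation map ${\rm ev}$ to both sides of the $Y$-expansion in \eqref{rwexp}. Since ${\rm ev}$ is a homomorphism of left $S\otimes_R Q$-modules, and since in the hyperbolic case each coefficient $K^Y(I_v,I_w)$ lies in the image of $S$ (expressed only in $y$-variables, as was argued via the twisted braid relations of Example~\ref{exkij}~(3)), the left-hand side becomes ${\rm ev}(\mathcal{R}^Y_{I_w})=\theta_w\,\delta_w$ by Lemma~\ref{evallem}, while the right-hand side becomes $\sum_{v\le w}{\rm ev}(K^Y(I_v,I_w))\,Y_{I_v}$, because ${\rm ev}$ acts trivially on the $Y_{I_v}\in Q_W$ and sends $y_\lambda\mapsto x_{-\lambda}$ on the scalar part.

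Next I would compare this with $\delta_w=\sum_{v\le w}b_{w,I_v}^Y\,Y_{I_v}$ from \eqref{expdw}, after multiplying through by $\theta_w^{-1}$; note $\theta_w^{-1}\in S$, so $\theta_w^{-1}\delta_w=\sum_{v\le w}\theta_w^{-1}\,{\rm ev}(K^Y(I_v,I_w))\,Y_{I_v}$. Since $\{Y_{I_v}:v\in W\}$ is a basis of the free left $S\otimes_R Q$-module $Q_W[[\Lambda]]_F$ (cited from \cite{CZZ1}), the coefficients match term by term, giving $b_{w,I_v}^Y=\theta_w^{-1}\,{\rm ev}(K^Y(I_v,I_w))$. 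Now I would observe that, on expressions purely in $y$-variables, the map ${\rm ev}$ coincides with the sign-change involution $*$ (both send $y_\lambda\mapsto y_{-\lambda}=x_{-\lambda}$ under our identification of a single copy of $S$), so ${\rm ev}(K^Y(I_v,I_w))=*K^Y(I_v,I_w)$, and likewise $\theta_w^{-1}=*\theta_w$. Since $*$ is a ring homomorphism, $b_{w,I_v}^Y=(*\theta_w)(*K^Y(I_v,I_w))=*(\theta_w\,K^Y(I_v,I_w))$. The argument for the $X$-root polynomial is identical, using ${\rm ev}(\mathcal{R}^X_{I_w})=\delta_w$ (so the normalizing parameter is trivial), yielding $b_{w,I_v}^X=*K^X(I_v,I_w)$.

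For the last sentence of the statement, I would argue as follows: the coefficients $b_{w,I_v}^Y$ and $b_{w,I_v}^X$ in \eqref{expdw} are intrinsically attached to $\delta_w$ and the fixed choice of reduced word $I_v$ for the index $v$ — they do not involve $I_w$ at all. Since $*$ is an involution on $S$, the relations just proved can be inverted to give $\theta_w\,K^Y(I_v,I_w)=*b_{w,I_v}^Y$ and $K^X(I_v,I_w)=*b_{w,I_v}^X$, whose right-hand sides are manifestly independent of the choice of $I_w$; and $\theta_w$ is independent of $I_w$ by Lemma~\ref{evallem}. Hence each $K^Y(I_v,I_w)$ and $K^X(I_v,I_w)$ depends only on $w$ (and on the fixed $I_v$), and therefore so does $\mathcal{R}^Y_{I_w}=\sum_{v\le w}K^Y(I_v,I_w)Y_{I_v}$ (respectively $\mathcal{R}^X_{I_w}$), justifying the notation $K^Y(I_v,w)$, $\mathcal{R}^Y_w$, etc.

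The only point requiring a little care — and the main potential obstacle — is the claim that in the hyperbolic case $K^Y(I_v,I_w)$ really lies in the image of $S\hookrightarrow S\otimes_R Q$ and is expressible purely in $y$-variables, with no $x$-variables sneaking in through the $\kappa_i$, $\kappa_{i,j}$, or $\xi_{i,j}$ of the twisted braid relations. This is exactly why the hyperbolic hypothesis is used: by Example~\ref{exkij}~(3) one has $\kappa_i=\mu_1$ and $\kappa_{i,j}=\mu_2$ (and $\xi_{i,j}=3\mu_2^2$), all constants in $R$, so rewriting the product defining $\mathcal{R}^Y_{I_w}$ into the basis $\{Y_{I_v}\}$ via \eqref{braid3ell}, \eqref{braid4ell} and the $m_{ij}=6$ relation never introduces a denominator or an $x_\alpha$; I would note this explicitly rather than take it for granted, since outside the hyperbolic case the argument genuinely fails. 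Everything else is a direct comparison of coefficients in a free module plus the bookkeeping that ${\rm ev}$ and $*$ agree on $y$-only expressions.
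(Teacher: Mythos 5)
Your proof is correct and follows essentially the same route as the paper: apply the evaluation map to the expansion \eqref{rwexp}, use Lemma~\ref{evallem} and compare coefficients with \eqref{expdw} in the free module $Q_W[[\Lambda]]_F$, then identify ${\rm ev}$ with $*$ on $y$-only expressions (using that the hyperbolic twisted braid relations have constant coefficients, so $K^Y(I_v,I_w)\in S$) and note $\theta_w^{-1}=*\theta_w$. The independence of $I_w$ is deduced exactly as in the paper, since $b_{w,I_v}^Y$, $b_{w,I_v}^X$ and $\theta_w$ do not involve $I_w$.
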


\begin{rems} (1) The formula in Theorem \ref{mainthm} relates the coefficients of the $Y$-root polynomial with the
  coefficients of the transformation matrix between two natural bases of the
  twisted formal group algebra; similarly for the $X$-root polynomial. This provides 
 an efficient way to compute these matrices, as shown in Examples~\ref{exbvv} below; note that a direct computation is difficult, because the Leibniz rule needs to be applied repeatedly. By contrast, the inverse matrices, expressing $Y_{I_v}$ and $X_{I_v}$ in terms of $\delta_w$, can be easily computed based on definitions: write $Y_i$ and $X_i$ as in \eqref{defxy} and expand. For cohomology and $K$-theory, the entries of the two pairs of inverse matrices feature prominently in the work of Kostant-Kumar \cite{KK90}. They are related by a duality property (which fails in the hyperbolic case), and they encode information about the singularities of Schubert varieties, see also \cite[Chapter~7]{BL}; thus, it is expected that, in the hyperbolic case, $b_{w,I_v}^Y$ is a more refined invariant of singularities. 

 (2) In this section, we have given an alternative proof to the first part of Theorem \ref{ind}, which has the advantage of being transparent and type-independent. Furthermore, in the next section we will show that the localization formulas of Billey and Graham-Willems easily follow from Theorem \ref{mainthm}, when specialized to the cases of the additive and multiplicative formal group laws. So we provide a transparent and uniform proof (i.e., in all Lie types, and for both ordinary cohomology and $K$-theory) of the mentioned results. In particular, in our approach the fact that the root polynomial is independent of a reduced word is a consequence, rather than an input to the main proof. 

(3) Theorem~\ref{mainthm} cannot hold (in its present form) beyond the hyperbolic case. Indeed,  by the second part of Theorem \ref{ind}, we know that, for a root polynomial to be independent of a reduced word, we must be in the hyperbolic case (this statement is true as long as the root system contains a pair of simple roots connected by a single edge in the Dynkin diagram). However, it would be interesting to find a generalization of Theorem~\ref{mainthm} beyond the hyperbolic case. 
 \end{rems}

\begin{examples}\label{exbvv} (1) We can easily rederive the formula for $b_{v,v}^Y$ in \eqref{bvv}. Indeed, it follows from Definition \ref{defroot}, by also taking into account the form of the twisted braid relations,  that
$$K^Y(I_{v},v)=(-1)^{\ell(v)}\prod_{\alpha\in \Phi^+\cap v\Phi^-}x_\alpha\,.$$ 
Then we combine this with the definition of the normalizing parameter $\theta_v$ in Lemma~\ref{evallem}, namely
\[\theta_v=(-1)^{\ell(v)}\prod_{\alpha\in \Phi^+\cap v\Phi^-}\frac{x_{-\alpha}}{x_\alpha}\,,\]
and apply the sign change involution to the product of $\theta_v$ and $K^Y(I_{v},v)$. In a completely similar way, we derive the identical formula for $b_{v,v}^X$ in \eqref{bvv}.

(2) Consider the root system of type $A_2$ and the Lorentz formal group law, i.e.,  \eqref{defell} with $\mu_1=0$. For each permutation $v\ne w_\circ$ in $S_3$ we have a unique reduced word $I_v$, and choose $I_{w_\circ}=(1,2,1)$. Based on ${\mathcal R}^Y_{(1,2,1)}$ calculated in Example~\ref{exrpa2}, while also recalling that in this case $\kappa_i=0$ (see Example~\ref{exkij}~(3)), $\theta_w=1$ (see Example~\ref{exeval}~(4)), and $y_{-\alpha}=-y_\alpha$, we have
\[\delta_{w_\circ}=y_1 y_{1+2}y_2
Y_1Y_2Y_1+y_1y_{1+2}Y_1Y_2+y_2y_{1+2}Y_2Y_1+(y_1+y_2)
Y_1 +y_{1+2} Y_2 +1\,.\]
If we choose $I_{w_\circ}=(2,1,2)$ instead, we need to adjust the above expansion by rewriting $Y_1Y_2Y_1$ as $Y_2Y_1Y_2+\mu_2(Y_2-Y_1)$ via \eqref{braid3ell}, and then collecting terms. Alternatively, we  can avoid using the twisted braid relation by using the expression for ${\mathcal R}^Y_{(2,1,2)}$ in Example~\ref{exrpa2}. Note that, although ${\mathcal R}^Y_{(1,2,1)}={\mathcal R}^Y_{(2,1,2)}$, the expressions for them given in Example~\ref{exrpa2} are different. 
\end{examples}

\subsection{The structure constants of the dual Demazure algebra} 
Let us now turn to a generalization of the cohomology and $K$-theory Schubert structure constants. Given a choice of reduced words $I_w$, recall the usual dual $Y_{I_w}^*$ of $Y_{I_w}$ in the dual Demazure algebra $\mathbf{D}_F^\star$, see Section~\ref{fda}. Let
\[Y_{I_u}^* \cdot Y_{I_v}^* =\sum_{w\ge u,v}p_{I_u,I_v}^{I_w}Y_{I_w}^*\] 
be the expansion of the given product into the basis $\{Y_{I_w}^*\}$ of $\mathbf{D}_F^\star$. We follow the approach in Billey's paper \cite{Bi99} related to the cohomology structure constants. We start by generalizing \cite[Lemma~5.3]{Bi99}.

\begin{prop}
The coefficients $p_{I_u,I_v}^{I_w}$ can be computed recursively
by
\[
p_{I_u,I_v}^{I_w}=\frac{1}{b_{w,I_w}^Y}\Big(b_{w,I_u}^Y b_{w,I_v}^Y-\sum_{t<w}p^{I_t}_{I_u,I_v}b_{w,I_t}^Y\Big)\,,
\]
starting with $w=u$ (for $v\le u$, $p_{I_u,I_v}^{I_u}=b_{u,I_v}^Y$) and going up in length.
\end{prop}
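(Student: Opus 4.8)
The plan is to mimic Billey's argument for \cite[Lemma~5.3]{Bi99}, transplanting it into the formal Demazure algebra setting. The starting point is the pairing from \cite[Theorem~12.4]{CZZ1}, under which $\{Y_{I_w}^*\}$ is dual to the Bott-Samelson basis $\{\zeta_{I_w}\}$ of $E_T^*(G/B)$ (equivalently, $Y_{I_w}^*$ is the usual linear dual of $Y_{I_w}$ in $\mathbf{D}_F^\star$). Evaluating the product $Y_{I_u}^*\cdot Y_{I_v}^*=\sum_{w}p_{I_u,I_v}^{I_w}Y_{I_w}^*$ against a basis element, I would pair both sides with $\delta_w\,\zeta_\emptyset$ (or directly with $\zeta_{I_w}$, using that the two are related by the change-of-basis matrix $b_{w,I_v}^Y$ from \eqref{expdw}). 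The key input is that the localization of $\zeta_{I_w}$ at the fixed point $w$ — or, in the GKM language, the value $(\zeta_{I_w})_w$ — together with the triangularity $\zeta_{I_w}=\sum_{v\le w}(\text{something})\,Y_{I_v}^*$-type relations, lets one read off $p_{I_u,I_v}^{I_w}$ by comparing coefficients of $Y_{I_w}$ on both sides after applying $\mathbf{D}_F$ to $\zeta_\emptyset$.

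Concretely, I would first record the triangularity: by Lemma~\ref{evallem} and the expansion \eqref{expdw}, $\delta_w=\sum_{v\le w}b_{w,I_v}^Y\,Y_{I_v}$, and dually $Y_{I_w}^*$ evaluated at $\delta_v$ gives a triangular matrix with diagonal entries $b_{v,I_v}^Y=\prod_{\alpha\in\Phi^+\cap v\Phi^-}x_\alpha$, which is a non-zero-divisor in $Q$, so the matrix is invertible over $Q$. Then, writing the defining relation of $p_{I_u,I_v}^{I_w}$ and pairing against $\delta_w$, the left side contributes $b_{w,I_u}^Y\,b_{w,I_v}^Y$ (a product, because the pairing of $Y_{I_u}^*$ with $\delta_w$ is $b_{w,I_u}^Y$ and similarly for $v$, and the product structure on $\mathbf{D}_F^\star$ is compatible with coproduct/localization exactly as in Billey), while the right side contributes $\sum_{t\le w}p_{I_u,I_v}^{I_t}\,b_{w,I_t}^Y$. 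Isolating the $t=w$ term, which carries the coefficient $b_{w,I_w}^Y=\prod_{\alpha\in\Phi^+\cap w\Phi^-}x_\alpha\neq 0$, and dividing, yields the stated recursion. The base case $w=u$ follows because $p_{I_u,I_v}^{I_u}$ is forced by triangularity ($Y_{I_u}^*\cdot Y_{I_v}^*$ has no $Y_{I_t}^*$ with $t<u$), and pairing against $\delta_u$ gives $p_{I_u,I_v}^{I_u}\,b_{u,I_u}^Y=b_{u,I_u}^Y\,b_{u,I_v}^Y$, so $p_{I_u,I_v}^{I_u}=b_{u,I_v}^Y$; note this is only nonzero when $v\le u$.

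The step I expect to be the main obstacle is justifying that pairing the product $Y_{I_u}^*\cdot Y_{I_v}^*$ against $\delta_w$ produces the \emph{product} $b_{w,I_u}^Y\,b_{w,I_v}^Y$ rather than something more complicated. In Billey's cohomology setting this is exactly the statement that localization is a ring homomorphism into $\bigoplus_{w}S$ with pointwise multiplication, so that the localization at $w$ of a product of classes is the product of localizations; here the analogous fact is that $i^*$ in \eqref{gkmemb} is a ring homomorphism (pointwise multiplication on $\bigoplus_{w\in W}S$), and that under $\mathbf{D}_F^\star\cong E_T^*(G/B)$ the product $Y_{I_u}^*\cdot Y_{I_v}^*$ localizes at $w$ to the product of the localizations of $Y_{I_u}^*$ and $Y_{I_v}^*$. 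Granting that, the localization of $Y_{I_u}^*$ at $w$ is precisely $b_{w,I_u}^Y$ up to the normalizing factor already absorbed into our conventions (this is essentially Theorem~\ref{mainthm} together with \cite[Theorem~12.4]{CZZ1}), and the rest is the formal inversion of a triangular system, exactly as in \cite[Lemma~5.3]{Bi99}. I would organize the write-up so that this ring-homomorphism fact is cited from \cite{CZZ,CZZ1,GaRa}, and the remainder is the short triangular-recursion computation.
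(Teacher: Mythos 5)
Your proposal is correct and takes essentially the same route as the paper: evaluate the product $Y_{I_u}^*\cdot Y_{I_v}^*$ at $\delta_w$, use that this evaluation is multiplicative (in the paper this is just the pointwise product with respect to the dual basis $f_w$ of $Q_W^*$, so that $Y_{I_v}^*(\delta_w)=b^Y_{w,I_v}$ by \eqref{expdw}), and then invert the triangular system with nonzero diagonal entries $b^Y_{w,I_w}$. The only cosmetic difference is that you route the multiplicativity and the identity $Y_{I_v}^*(\delta_w)=b^Y_{w,I_v}$ through the GKM ring homomorphism and Theorem~\ref{mainthm}, where the paper reads both off directly from the expansion $Y_{I_v}^*=\sum_{w\ge v}b^Y_{w,I_v}f_w$.
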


\begin{proof}
In the notation of \cite{CZZ} and \cite{CZZ1} $Y_{I_v}^*=\sum_{w\ge v}b^Y_{w,I_v}f_w$ (here $f_w\in Q_W^*$ is the usual dual to $\delta_w$). Therefore, $Y_{I_v}^*(\delta_w)=b^Y_{w,I_v}$. Observe that $Y_{I_v}^*(\delta_w)=0$ if $\ell(w)\le \ell(v)$ and $w\neq v$.
Therefore, the expansion of any element $f \in Q_W^*$ into the basis $\{Y_{I_w}^*\}$ is given by $f=\sum_{w\in W}c_{I_w}Y_{I_w}^*$, where
\[
c_{I_w}=\frac{1}{b_{w,I_w}^Y}\Big(f(\delta_w)-\sum_{v<w}c_{I_v}Y_{I_v}^*(\delta_w)\Big)\,.
\]
Applying this to the product $f=Y_{I_u}^* \cdot Y_{I_v}^*$, we get
\[
p_{I_u,I_v}^{I_w}=Y_{I_u}^*(\delta_w) \cdot Y_{I_v}^*(\delta_w)=\frac{1}{b_{w,I_w}^Y}\Big(Y_{I_u}^*(\delta_w)\cdot Y_{I_v}^*(\delta_w)-\sum_{t<w}p^{I_t}_{I_u,I_v} b_{w,I_t}^Y\Big)\,.\qedhere
\]
\end{proof}

\begin{cor}
For any formal group law, the coefficients $p_{I_u,I_v}^{I_w}$ do not depend on the choice of $I_w$, so we can write $p_{I_u,I_v}^w=p_{I_u,I_v}^{I_w}$.
\end{cor}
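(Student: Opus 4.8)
The plan is to prove this by induction on $\ell(w)$, feeding on the recursive formula of the preceding Proposition; the reduced words $I_u$ and $I_v$ in the subscript are fixed throughout. The base of the induction is immediate: if $w\not\ge u$ or $w\not\ge v$ in the Bruhat order then $p^{I_w}_{I_u,I_v}=0$, and if $w=u$ (resp.\ $w=v$) the reduced word occurring in the superscript is forced to equal $I_u$ (resp.\ $I_v$), so there is nothing to choose. Thus the real content lies in the inductive step: assuming the assertion for all $t<w$ — which lets us write $p^t_{I_u,I_v}$ unambiguously — one must show that
\[
p_{I_u,I_v}^{I_w}=\frac{1}{b_{w,I_w}^Y}\Big(b_{w,I_u}^Y b_{w,I_v}^Y-\sum_{t<w}p^{t}_{I_u,I_v}\,b_{w,I_t}^Y\Big)
\]
is the same for every reduced word $I_w$ of $w$.

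To do this I would argue that the right-hand side does not refer to $I_w$ at all. The normalizing factor is $b_{w,I_w}^Y=\prod_{\alpha\in\Phi^+\cap w\Phi^-}x_\alpha$ by \eqref{bvv}, manifestly independent of $I_w$; the terms $b_{w,I_u}^Y$, $b_{w,I_v}^Y$ and the coefficients $b_{w,I_t}^Y$ for $t<w$ are, by \cite[Lemmas~3.1~and~3.2]{CZZ1}, the coefficients of $Y_{I_u}$, $Y_{I_v}$, $Y_{I_t}$ in the expansion of $\delta_w$ in the $Y$-basis, so they are determined by the respective reduced words and by $w$ only; and the $p^{t}_{I_u,I_v}$ with $\ell(t)<\ell(w)$ are choice-free by the inductive hypothesis. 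It then follows that $p_{I_u,I_v}^{I_w}$ depends only on $I_u$, $I_v$ and $w$, which completes the induction and licenses the notation $p^w_{I_u,I_v}$. As a consistency check, note that the right-hand side of the recursion is simply $b^Y_{w,I_u}b^Y_{w,I_v}$ minus the ``lower'' part of the identity $(Y_{I_u}^*\cdot Y_{I_v}^*)(\delta_w)=Y_{I_u}^*(\delta_w)\,Y_{I_v}^*(\delta_w)=\sum_{t\le w}p^{t}_{I_u,I_v}\,b^Y_{w,I_t}$ already used in the proof of the Proposition, i.e.\ the recursion is a rearrangement of a choice-free equality.

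The step I expect to be delicate is precisely the bookkeeping of reduced words inside the sum $\sum_{t<w}p^{t}_{I_u,I_v}\,b_{w,I_t}^Y$ — equivalently, pinning down how the Bott--Samelson-dual functionals $Y_{I_t}^*$ and the expansion coefficients $b^Y_{w,I_t}$ respond to a change of reduced word, and checking that the product $Y_{I_u}^*\cdot Y_{I_v}^*\in\mathbf{D}_F^\star$ is genuinely insensitive to those choices. Here one must use the conventions of \cite{CZZ1} exactly, rather than a naive ``coefficient in an arbitrary basis'' reading; once that framework is in place, the computation above goes through, and — consistently with Theorems~\ref{ind} and \ref{mainthm} in the hyperbolic case — one obtains reduced-word independence of the structure constants for an arbitrary formal group law.
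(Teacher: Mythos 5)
Your argument is correct and is essentially the paper's own: the corollary is an immediate consequence of the preceding Proposition, since in the recursion the only place the superscript word could enter is through $b_{w,I_w}^Y$, which equals $\prod_{\alpha\in\Phi^+\cap w\Phi^-}x_\alpha$ by \eqref{bvv} and hence is choice-free, while the remaining data ($b_{w,I_u}^Y$, $b_{w,I_v}^Y$, $b_{w,I_t}^Y$ and the lower $p^{I_t}_{I_u,I_v}$) involve only the fixed words $I_u,I_v,I_t$. Your induction on $\ell(w)$ just makes this explicit, so the approach matches the paper.
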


Other results in Billey's paper \cite{Bi99} can be generalized as well. Proposition~5.4 generalizes as follows: the coefficients $p_{I_u,I_v}^w$ are in $\mathcal{I}^{\ell(u)+\ell(v)-\ell(w)}$ (the power of the augmentation ideal of $S$). Proposition~5.5 is the same, except for replacing $d_{uv}$ by $b^Y_{v,I_u}$. Hence, in the hyperbolic case, via Theorem~\ref{mainthm}, we are able to compute $p_{I_u,I_v}^w$ in terms of the coefficients $K^Y(I_v,w)$ of $Y$-root polynomials. This gives another application of these polynomials. Furthermore, we have completely similar results for the structure constants in the multiplication of the basis elements $\{X_{I_w}^*\}$. 

\section{Localization formulas in cohomology and $K$-theory}\label{local}

\subsection{The Kostant-Kumar functions} The goal of Billey's paper \cite{Bi99}, related to ordinary cohomology, and of the papers of Graham \cite{graekt} and Willems \cite{wilcke}, related to $K$-theory, is to express the functions $\psi^v$ and $\xi^v$ from $W$ to $S$ defined by Kostant and Kumar in \cite{KK86} and \cite{KK90}, respectively. In this section we recall these functions, and relate them to the classes $\zeta_v$ in \eqref{pushpull}. Note that Billey and Willems interchange the roles of $v$ and $v^{-1}$ in all the formulas of Kostant-Kumar, which we also do. 

Recall from Examples~\ref{exfgl}~(1),(2) the special form of $S$ in cohomology and $K$-theory, which will be implicitly used. 
In the cohomology case, the functions $\psi^v$ are characterized by the following conditions:
\begin{enumerate}
\item[(H1)] $\psi^v(w)$ equals $0$ unless $v\le w$, and $\psi^w(w)=\prod_{\alpha\in\Phi^+\cap w\Phi^-}\alpha$;
\item[(H2)] given the operators $A_i$ (associated to the simple roots) acting on functions $f:W\to S$ by
\begin{equation}\label{defai}(A_i\,f)(w)=\frac{f(w)-f(ws_i)}{-w\alpha_i}\,,\end{equation}
we have
\begin{equation}\label{recai}
A_i\,\psi^v=\casetwo{\psi^{vs_i}}{vs_i<v}{0}{vs_i>v}
\end{equation}
\end{enumerate} 

Similarly, in $K$-theory, the functions $\psi^v$ are characterized by the following conditions:
\begin{enumerate}
\item[(K1)] $\psi^v(w)$ equals $0$ unless $v\le w$, and $\psi^w(w)=\prod_{\alpha\in\Phi^+\cap w\Phi^-}(1-e^\alpha)$;
\item[(K2)] given the operators $D_i$ (associated to the simple roots) acting on functions $f:W\to S$ by
\begin{equation}\label{defdi}(D_i \,f)(w)=\frac{f(w)-e^{-w\alpha_i}f(ws_i)}{1-e^{-w\alpha_i}}\,,\end{equation}
we have
\begin{equation}\label{recdi}
D_i\,\psi^v=\casetwo{\psi^v+\psi^{vs_i}}{vs_i<v}{0}{vs_i>v}
\end{equation}
\end{enumerate} 

The $K$-theoretic functions $\xi^v\,:\,W\to S$ are defined by analogy with $\psi^v$. More precisely, condition (K1) is identical, while condition (K2), namely \eqref{recdi}, is replaced with the following one, for the same operators $D_i$:
\begin{equation}\label{recdi1}
D_i\,\xi^v=\casetwo{\xi^{vs_i}}{vs_i<v}{\xi^v}{vs_i>v}
\end{equation}

Let us now discuss the relationship between the functions $\xi^v$ and $\psi^v$ and the corresponding classes ${\zeta}_{I_v}={\zeta}_v$ and $\widetilde{\zeta}_{I_v}=\widetilde{\zeta}_v$ in  \eqref{pushpull} and \eqref{pushpulld}. Recall that in ordinary cohomology we have $X_i=Y_i$, whereas in $K$-theory we have $X_i=Y_i-1$. We establish the mentioned relationship by using the corresponding GKM model, so we represent the elements of $E_T(G/B)$ as functions $f\colon W\to S$. %; in particular, the elements $\widetilde{\zeta}_v$ will be denoted with an upper index, in concordance with the Kostant-Kumar notation $\psi^v$. 
Given a function $f:W\to S$, let $f^{\circ}:W\to S$ be defined by $f^{\circ}(w):=(\delta_{w_\circ}(f))(w)=f(ww_\circ)$, cf. \eqref{actiongkm}. Also recall the involution $\alpha_i\mapsto -w_\circ\alpha_i=:\alpha_{i^*}$ on the simple roots. 

The key fact is the following relation for functions $f:W\to S$:
\begin{equation}\label{keyrel}
(X_{{i}^*} \,f^{\circ})^{\circ}=X_{-i}\, f\,,
\end{equation}
where $X_{-i}:=*X_i$, see the definition of the sign change involution in Section \ref{coeffroot}. 
Indeed, based on the action formula \eqref{actiongkm} in the GKM embedding \eqref{inc2}, the operator $X_{\pm i}$ is expressed in the GKM model as
\[
(X_{\pm i}\,f)(w)=\frac{1}{x_{\pm w \alpha_i}} (f(ws_i)-f(w))\,.
\]
Thus we have
\[
X_{i^*}\,f^{\circ}(w)=\frac{1}{x_{w\alpha_i^*}}(f^{\circ}(ws_{i^*})-f^{\circ}(w))=\frac{1}{x_{-ww_\circ\alpha_i}}(f(ws_{i^*}w_\circ)-f(ww_\circ)).
\]
Since $s_{i^*}=w_\circ s_i w_\circ$, the claim \eqref{keyrel} follows.

Letting $Y_{-i}:=*Y_i$, observe that the operator $A_i$ for ordinary cohomology coincides with the operator $X_{-i}=Y_{-i}$
and the operator
$D_i$ for $K$-theory coincide with the operator $Y_{-i}=1+X_{-i}$.
Indeed, for ordinary cohomology this follows from Example~\ref{exxy}~(1) and the fact that $x_\lambda$ corresponds to $-\lambda$.
And for $K$-theory it follows from Example~\ref{exxy}~(2) and the identification $x_{-\lambda}=1-e^\lambda$.

%and can be expressed as a linear combination of $\zeta_{I_v}$'s, $v\le w$ after expanding and reducing $\widetilde{\zeta}_{I_w}=(Y_{i_l}-1)\ldots (Y_{i_1}-1)\,\zeta_{\emptyset}$ for $K$-theory.

\begin{prop}\label{wil-gen}
{\rm (1)} In cohomology and in $K$-theory, we have 
\begin{equation}\label{zetaphi}\widetilde{\zeta}_{vw_\circ}(ww_\circ)=\psi^{v}(w)\,,\;\;\;\;\; i.e.,\;\;(\widetilde{\zeta}_{vw_\circ})^{\circ}=\psi^v\,.\end{equation}

{\rm (2)} In $K$-theory, we have 
\begin{equation}\label{zetaxi}{\zeta}_{vw_\circ}(ww_\circ)=\xi^{v}(w)\,,\;\;\;\;\; i.e.,\;\;({\zeta}_{vw_\circ})^{\circ}=\xi^v\,.\end{equation}
\end{prop}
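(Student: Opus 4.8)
The plan is to prove both parts of Proposition~\ref{wil-gen} by an induction on $\ell(v)$ (equivalently, downward induction on $\ell(vw_\circ)$), using that both sides of each claimed equality satisfy the same recursion in the Kostant--Kumar characterizations (H1)--(H2), (K1)--(K2), (K2$'$), together with the same initial condition. The main tools are already in place: the GKM translations of $X_i$ and $Y_i$ from \eqref{actiongkm}, the push-pull recursion \eqref{pushpull} and its $X$-analogue \eqref{pushpulld}, the explicit top class \eqref{topclassgen}, and — crucially — the conjugation identity \eqref{keyrel}, $(X_{i^*}f^\circ)^\circ=X_{-i}f$, together with its $Y$-analogue $(Y_{i^*}f^\circ)^\circ=Y_{-i}f$ (proved the same way, since $Y_i=X_i+\kappa_i$ and $\kappa_i$ is $W$-invariant in the relevant sense, or directly from the GKM formula $(Y_{\pm i}f)(w)=\tfrac{1}{x_{\mp w\alpha_i}}f(w)+\tfrac{1}{x_{\pm w\alpha_i}}f(ws_i)$). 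The observation recorded just before the proposition, that $A_i=X_{-i}=Y_{-i}$ in cohomology and $D_i=Y_{-i}=1+X_{-i}$ in $K$-theory, is exactly what lets the $\circ$-twist convert the Kostant--Kumar operators into the Demazure/push-pull operators appearing in \eqref{pushpull} and \eqref{pushpulld}.

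For part (1): first I would check the base case $v=w_\circ$. Here $\widetilde{\zeta}_{w_\circ w_\circ}=\widetilde{\zeta}_{\emptyset}=\zeta_\emptyset$ is the class of a point, so by \eqref{topclassgen} (with $v=\mathrm{Id}$) we get $\zeta_\emptyset(w)=\prod_{\alpha\in\Phi^+\cap w\Phi^+}y_{-\alpha}$ for $w=\mathrm{Id}$ and $0$ otherwise; actually we need its value $(\widetilde{\zeta}_{\emptyset})^\circ(w)=\zeta_\emptyset(ww_\circ)$, which is supported on $w=w_\circ$ and equals $\prod_{\alpha\in\Phi^+\cap w_\circ\Phi^+}y_{-\alpha}=\prod_{\alpha\in\Phi^-}y_{-\alpha}=\prod_{\alpha\in\Phi^+}y_\alpha$. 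Under the identification $y_\alpha=-\alpha$ in cohomology this is $\prod_{\alpha\in\Phi^+}(-\alpha)$, and I would match this against $\psi^{w_\circ}(w_\circ)=\prod_{\alpha\in\Phi^+\cap w_\circ\Phi^-}\alpha=\prod_{\alpha\in\Phi^+}\alpha$; a sign discrepancy of $(-1)^{\ell(w_\circ)}$ would need to be reconciled — but it is absorbed correctly because $\widetilde\zeta$ is built from the $X_i$'s, whose GKM formula carries the sign $x_{w\alpha_i}=-w\alpha_i$, so I would be careful to track this at exactly this step (in $K$-theory, $y_\alpha=1-e^{-\alpha}$ and one uses $\prod_{\alpha\in\Phi^-}(1-e^{-\alpha})=\prod_{\alpha\in\Phi^+}(1-e^\alpha)$, matching (K1)). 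Then for the inductive step: pick $i$ with $vs_i>v$, i.e. $(vw_\circ)s_i<vw_\circ$; applying \eqref{pushpulld} we can write $\widetilde\zeta_{(vs_i)w_\circ}$ in terms of $\widetilde\zeta_{vw_\circ}$ via a single $X_{i'}$ (after conjugating the reduced word of $(vs_i)w_\circ$ appropriately), conjugate by $\circ$ using the $Y/X$-analogue of \eqref{keyrel} to turn this into $X_{-i}=A_i$ (cohomology) or $Y_{-i}=D_i$ ($K$-theory) applied to $\psi^v$, and then invoke \eqref{recai}/\eqref{recdi} to conclude $(\widetilde\zeta_{(vs_i)w_\circ})^\circ=\psi^{vs_i}$. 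The vanishing clause ($\psi^v(w)=0$ unless $v\le w$) matches $\widetilde\zeta_{vw_\circ}(ww_\circ)=0$ unless $vw_\circ\ge ww_\circ$, i.e. $v\le w$, again via \eqref{topclassgen}.

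For part (2) the argument is parallel but uses the $Y$-version \eqref{pushpull} and the $K$-theory push-pull operator: the base case $v=w_\circ$ is identical ($\zeta_\emptyset$ is the point class, matching (K1), which is the same as for $\xi$), and the inductive step uses \eqref{pushpull} conjugated by $\circ$ to produce $Y_{-i}=D_i$ acting on $\xi^v$, then invokes \eqref{recdi1} (the $\xi$-recursion, which differs from the $\psi$-recursion precisely in the $vs_i>v$ branch, giving $D_i\xi^v=\xi^v$ rather than $0$); this is consistent because in that branch $(vs_i)w_\circ$ is the longer element and one is going the "wrong" way, so the recursion \eqref{pushpull} relating $\zeta$'s does not decrease length — the $Y_i^2=\kappa_iY_i$ relation and the idempotent-like behaviour of $D_i$ on $K$-theory classes is what makes $D_i\xi^v=\xi^v$ rather than $0$ in this branch. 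I would spell out the case analysis on whether $\ell$ goes up or down carefully here, since that is where the two functions $\psi$ and $\xi$ genuinely diverge.

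The main obstacle I anticipate is bookkeeping rather than conceptual: getting all the signs, the inverse/$*$-involution, and the $w\leftrightarrow w^{-1}$ and $v\leftrightarrow vw_\circ$ reindexings to line up consistently between the Kostant--Kumar conventions (as modified by Billey--Willems) and the Bott--Samelson class conventions \eqref{zetaw}, \eqref{defi}. In particular, verifying the $Y$-analogue of \eqref{keyrel} and checking that the single-step recursions \eqref{pushpull}/\eqref{pushpulld} — which prepend an operator at the end of a reduced word — correctly translate, after the $\circ$-conjugation and the $i\mapsto i^*$ twist, into the operators $A_i$, $D_i$ acting on the \emph{right} in \eqref{recai}, \eqref{recdi}, \eqref{recdi1}, is the delicate point. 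Once the dictionary is fixed, the induction closes immediately because the defining recursions are identical on both sides.
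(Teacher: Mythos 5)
Your overall strategy -- conjugating by $\circ$ via \eqref{keyrel} (and its $Y$-analogue, which is indeed harmless: in $K$-theory $D_i=1+X_{-i}$, so it follows from \eqref{keyrel} and linearity) and matching the Kostant--Kumar characterizations by induction from the base case $v=w_\circ$ -- is exactly the paper's argument, but your inductive step as written does not close. You pick $i$ with $vs_i>v$ and claim to deduce $(\widetilde{\zeta}_{(vs_i)w_\circ})^{\circ}=\psi^{vs_i}$ from $(\widetilde{\zeta}_{vw_\circ})^{\circ}=\psi^v$ via \eqref{recai}/\eqref{recdi}; but in that branch these recursions give $A_i\,\psi^v=0$ and $D_i\,\psi^v=0$, so they produce nothing, and moreover when $vs_i>v$ the element $(vs_i)w_\circ$ is \emph{shorter} than $vw_\circ$ (not longer, as you assert when discussing part (2)), so \eqref{pushpulld} expresses $\widetilde{\zeta}_{vw_\circ}=X_{i^*}\,\widetilde{\zeta}_{(vs_i)w_\circ}$ and cannot write $\widetilde{\zeta}_{(vs_i)w_\circ}$ as a single operator applied to $\widetilde{\zeta}_{vw_\circ}$. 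Since your base case is the longest element, the step must go from $v$ down to $vs_i<v$: then $(vs_i)w_\circ=vw_\circ s_{i^*}$ has length $\ell(vw_\circ)+1$, so $\widetilde{\zeta}_{(vs_i)w_\circ}=X_{i^*}\,\widetilde{\zeta}_{vw_\circ}$, and \eqref{keyrel} together with $A_i\,\psi^v=\psi^{vs_i}$ (resp. $D_i\,\xi^v=\xi^{vs_i}$) closes the induction -- this is precisely the paper's one-line computation. Note also that only the $vs_i<v$ branches of \eqref{recai}, \eqref{recdi}, \eqref{recdi1} are used in the induction; the $vs_i>v$ branch of \eqref{recdi1} and the relation $Y_i^2=\kappa_iY_i$ play no role there (they belong to the characterization, not to the construction).

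A smaller point: your base-case computation is off, and the sign worry it generates is spurious. By \eqref{topclass}, $(\widetilde{\zeta}_{\emptyset})^{\circ}(w)=\zeta_{\emptyset}(ww_\circ)$ is supported at $w=w_\circ$ with value $\zeta_{\emptyset}(\mathrm{Id})=\prod_{\alpha\in\Phi^+}y_{-\alpha}$; your set $\Phi^+\cap w_\circ\Phi^+$ is empty, and in any case the evaluation point is $ww_\circ=\mathrm{Id}$, not $w_\circ$. Since $y_{-\alpha}=\alpha$ in cohomology and $y_{-\alpha}=1-e^{\alpha}$ in $K$-theory, this matches $\psi^{w_\circ}(w_\circ)$ in (H1) and (K1) on the nose, so there is no factor $(-1)^{\ell(w_\circ)}$ to absorb. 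With the Bruhat condition in the step corrected and the base case recomputed, your argument coincides with the paper's proof.
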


\begin{proof}
For the first part, we proceed by induction with base case \eqref{topclass} and conditions (H1)-(K1) above, and induction step based on \eqref{recai}, \eqref{recdi}, \eqref{pushpulld} and \eqref{keyrel}: if $vs_i<v$, then
\[
\psi^{vs_i}=X_{-i}\,\psi^v=(X_{i^*}\,(\psi^v)^{\circ})^{\circ}=(X_{i^*}\,\widetilde{\zeta}_{vw_\circ})^{\circ}=(\widetilde{\zeta}_{vs_iw_\circ})^{\circ}\,. 
\]
The second part is completely similar.
\end{proof}

Let us now consider the functions $*\psi^v$, cf. the notation introduced in Section~\ref{coeffroot}. Since $\psi^v$ are generated recursively by $X_{-i}$, as discussed above, and since $X_i\,(*f)=*(X_{-i}\,f)$, it follows that
\begin{equation}\label{recdual}
*\psi^v=X_{(w_\circ v)^{-1}}\,(*\psi^{w_\circ})\,.
\end{equation}

\begin{rem}\label{geometry} Based on the discussion in Sections \ref{strucetgb} and \ref{gkmsetup}, as well as on Proposition~\ref{wil-gen}~(2), the evaluation $\xi^v(w)$ can be interpreted geometrically as $i_{ww_\circ}^*([{\mathcal O}_{vw_\circ}])$, where ${\mathcal O}_u$ is the structure sheaf of the Schubert variety indexed by $u$. On another hand, the Poincar\'e dual of the $K$-theory class $\zeta_v=[{\mathcal O}_v]$ is the class of the {\em ideal sheaf} of the Schubert variety indexed by $v$, which under $i_w^*$ is mapped to $*\psi^v(w)$, see e.g. \cite{KK90}, \cite[Prop.~2.1]{gakpte}.
The duality property, which holds for both cohomology and $K$-theory, 
says that there is a non-degenerate pairing 
\begin{equation}\label{idealsh-dual}
p_!(\zeta_v \cdot *\psi^w)=p_!(Y_{v^{-1}}\,\zeta_\emptyset \cdot X_{(w_\circ v)^{-1}}\,(*\psi^{w_\circ}))=\delta_{v,w}\,,
\end{equation}
where $p_!$ is the degree map for cohomology and the Euler characteristic for $K$-theory.
\end{rem}

\subsection{The formulas of Billey and Graham-Willems}\label{bwres} 
We are now stating the combinatorial formulas for the functions $\psi^v$ (in cohomology and $K$-theory) and $\xi^v$ (in $K$-theory) due to Billey and  Graham-Willems. 

Recall from Examples~\ref{exkij}~(1),(2) that the push-pull operators $Y_i$ satisfy the relations $Y_i^2=0$ in cohomology, $Y_i^2=Y_i$ in $K$-theory, and the usual braid relations in both cases. In other words, the algebras generated by them are the {\em nil-Coxeter algebra} and the {\em $0$-Hecke algebra}, respectively; we consider these algebras over $S$. Recall that they have bases $\{Y_w\,:\,w\in W\}$. %We consider the corresponding root polynomials as elements of the mentioned algebras, see Definition~\ref{defroot} and Examples~\ref{exeval}~(1),(2). Since they are independent of the corresponding reduced words, see Theorem~\ref{ind} and Remark~\ref{remind}~(1), we denote them by ${\mathcal R}^Y_w$, for $\in W$. 
%Given this setup, we can state the main result in the papers by Billey and Willems in a uniform way, as follows.

%\begin{theo}{\rm \cite{Bi99,wilcke}}\label{bilwil}
%In both cohomology and $K$-theory, $\psi^v(w)$ is equal to the coefficient of $Y_v$ in the corresponding basis expansion of $\theta_w\,{\mathcal R}^Y_w$.
%\end{theo}

%This statement can be made more explicit, based on the conventions in Examples~\ref{exfgl}~(1),(2). 
Consider a reduced word $I_w=(i_1,\ldots,i_l)$ for $w$, and recall the corresponding reflection order \eqref{reflord} on $\Phi^+\cap w\Phi^-$, namely $(\beta_1,\ldots,\beta_l)$ with $\beta_j:=s_{i_1}\ldots s_{i_{j-1}}\alpha_{i_j}$. Then, as stated in \cite[Theorem 4]{Bi99}, in the cohomology case we have
\begin{equation}\label{billey-main}
\psi^v(w)=\sum_{1\le j_1<\ldots< j_k\le l}\beta_{j_1}\ldots\beta_{j_k}\,,
\end{equation}
where $k=\ell(v)$, and the summation ranges over the integer sequences $1\le j_1<\ldots< j_k\le l$ for which the subword  $(i_{j_1},\ldots,i_{j_k})$ of $I_w$ is a reduced word for $v$.

The corresponding result in $K$-theory, stated as \cite[Theorem~3.7]{graekt} and as \cite[Theorem~4.7]{wilcke}, is the following:
\begin{equation}\label{willems-main}
\psi^v(w)=e^{\rho-w\rho}\sum_{1\le j_1<\ldots< j_k\le l}(e^{-\beta_{j_1}}-1)\ldots(e^{-\beta_{j_k}}-1)\,,
\end{equation}
where $\ell(v)\le k\le l$, and the summation ranges over the integer sequences $1\le j_1<\ldots< j_k\le l$ for which the corresponding subword $(p_1,\ldots,p_k)=(i_{j_1},\ldots,i_{j_k})$ of $I_w$ is subject to $Y_{p_1}\ldots Y_{p_k}=Y_v$ in the $0$-Hecke algebra.

We now state Graham's formula for $\xi^v$, which appears as \cite[Theorem~3.12]{graekt}. To this end, we note that the Demazure operators $X_i$ in $K$-theory satisfy $X_i^2=-X_i$, and we can consider the basis $\{X_w\,:\,w\in W\}$ of the $0$-Hecke algebra. With the same notation as above, we have
%In $K$-theory, $\xi^v(w)$ is equal to the coefficient of $X_v$ in the corresponding basis expansion of ${\mathcal R}^X_w$. More precisely, we have
\begin{equation}\label{willems-main1}
\xi^v(w)=\sum_{1\le j_1<\ldots< j_k\le l}(-1)^{k-\ell(v)}\,(1-e^{\beta_{j_1}})\ldots(1-e^{\beta_{j_k}})\,,
\end{equation}
where $\ell(v)\le k\le l$, and the summation ranges over the integer sequences $1\le j_1<\ldots< j_k\le l$ for which the corresponding subword $(p_1,\ldots,p_k)=(i_{j_1},\ldots,i_{j_k})$ of $I_w$ is subject to $X_{p_1}\ldots X_{p_k}=\pm X_v$ in the $0$-Hecke algebra.

\begin{rems}
(1) We derived \eqref{willems-main1} in our setup independently (see Section~\ref{bwrev}), before W. Graham sent us his preprint.

(2) In \cite[Theorem~3.7]{graekt}, Graham gives the formula for $*\psi^v(w)$ (cf.~Remark~\ref{geometry}), which can be easily obtained from \eqref{willems-main} by changing the signs of all exponents in the right-hand side.
\end{rems}

\subsection{Localization formulas in connective $K$-theory}\label{seclocck} We work in the setup of connective $K$-theory, see Example \ref{exfgl}~(3).  We set $t:=\mu_1$, such that the cases $t=0$, resp. $t=1$, correspond to ordinary cohomology, resp. $K$-theory. Recall from Example~\ref{exkij}~(2) that the operators $Y_i$ and $X_i$ satisfy $Y_i^2=tY_i$,  $X_i^2=-tX_i$, and the usual braid relations. Like above, we consider the bases $\{Y_w\,:\,w\in W\}$ and $\{X_w\,:\,w\in W\}$ of the corresponding Demazure algebra.

In concordance with Proposition~\ref{wil-gen}, we define
\begin{equation}\label{defpsiv}
\psi^v:=(\widetilde{\zeta}_{vw_\circ})^{\circ}\,,\;\;\;\;\;\xi^v:=({\zeta}_{vw_\circ})^{\circ}\,.
\end{equation}
By a completely analogous proof to that of Proposition~\ref{wil-gen}, we prove that 
\begin{equation}\label{recpsixi}X_{-i}\,\psi^v=\psi^{vs_i}\,,\;\;\;\;\;Y_{-i}\,\xi^v=\xi^{vs_i}\,,\;\;\;\;\mbox{for $vs_i<v$}\,;
\end{equation}
in particular, the version of \eqref{recdual} in connective $K$-theory (having the same form) holds, cf. Remark~\ref{geometry}. Recall the root polynomials ${\mathcal R}^Y_w$ and ${\mathcal R}^X_w$ (cf. Theorem~\ref{mainthm}), as well as the formal Kostant polynomials $K^Y(v,w)$ and $K^X(v,w)$ in Definition~\ref{defkos} (corresponding to our connective $K$-theory case). The following result is the main one of this section. It generalizes the similar results in cohomology and $K$-theory in \cite{Bi99,wilcke}; note that the latter immediately imply \eqref{billey-main} and \eqref{willems-main}, see below.

\begin{theo}\label{locck}
{\rm (1)} In connective $K$-theory, we have $\psi^v(w)=\theta_w \,K^Y(v,w)$.

{\rm (2)} Similarly, we have $\xi^v(w)=K^X(v,w)$.
\end{theo}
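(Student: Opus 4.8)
The plan is to deduce Theorem~\ref{locck} from Theorem~\ref{mainthm} together with the recursion \eqref{recpsixi} and the geometric identification \eqref{defpsiv} of $\psi^v$ and $\xi^v$ with the classes $\widetilde\zeta_{vw_\circ}$ and $\zeta_{vw_\circ}$ in the GKM model. The key observation is that the transition coefficients $b^Y_{w,I_v}$ and $b^X_{w,I_v}$ of \eqref{expdw} are, up to the evaluation map and the sign-change involution, exactly the formal Kostant polynomials, so the whole theorem should reduce to matching $\psi^v(w)$, $\xi^v(w)$ against these same coefficients.

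First I would establish the link between the Bott--Samelson classes and the coefficients $b^X$, $b^Y$ in the GKM picture. By \eqref{pushpull} and \eqref{pushpulld}, $\zeta_{vw_\circ}=Y_{(vw_\circ)^{-1}}\zeta_\emptyset$ and $\widetilde\zeta_{vw_\circ}=X_{(vw_\circ)^{-1}}\zeta_\emptyset$, where $\zeta_\emptyset$ is the point class given by \eqref{topclass}. Applying $\delta_{w_\circ}$ (i.e.\ passing to $f^\circ$) and evaluating at $w$, one expresses $\psi^v(w)$ and $\xi^v(w)$ in terms of the action of $\delta$'s, hence in terms of the $b$-coefficients. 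Concretely, I expect to show by induction on $\ell(v)$ — using the base case $v=w_\circ$ and the recursions \eqref{recpsixi}, exactly as in Proposition~\ref{wil-gen} — that $\xi^v(w)$ equals the coefficient of $\delta_w$-type data dual to $Y_{I_v}$, i.e.\ that $\xi^v(w)=*\,b^X_{?,?}$ up to the appropriate relabeling $v\mapsto vw_\circ$, $w\mapsto ww_\circ$; and similarly $\psi^v(w)$ matches $b^Y$. Here I will need to be careful with the relation $X_{-i}=*X_i$, the identity \eqref{keyrel}, and the combinatorial bookkeeping that interchanges $v\leftrightarrow v^{-1}$ and inserts $w_\circ$; the sign-change involution $*$ and the involution $i\mapsto i^*$ on simple roots must be tracked consistently. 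Once this is done, Theorem~\ref{mainthm} gives $b^X_{w,I_v}=*K^X(I_v,w)$ and $b^Y_{w,I_v}=*(\theta_w K^Y(I_v,w))$; applying $*$ once more and using $*\theta_w=\theta_w^{-1}$ converts these into the asserted formulas $\xi^v(w)=K^X(v,w)$ and $\psi^v(w)=\theta_w K^Y(v,w)$.

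The part I expect to be the main obstacle is the precise alignment of all the indexing conventions: the theorem as stated has $\psi^v(w)$ and $\xi^v(w)$ with no $w_\circ$'s, whereas the natural object produced by the recursion is $(\widetilde\zeta_{vw_\circ})^\circ$, and the Kostant polynomials $K^Y(v,w)$, $K^X(v,w)$ come out of the root polynomial $\mathcal R_{I_w}$ expanded in the $Y_{I_v}$ (resp.\ $X_{I_v}$) basis, which is dual to the $\delta$-expansion \eqref{expdw} in a way that again flips $w_\circ$ and inverses. Reconciling \eqref{recdual} (and its connective-$K$ analogue noted after \eqref{recpsixi}) with the $*$-twisted statement of Theorem~\ref{mainthm}, and verifying that the $w_\circ$'s cancel so that one genuinely lands on $K^X(v,w)$ rather than $K^X(v w_\circ, w w_\circ)$ or its dual, is the delicate point; everything else is the routine induction of Proposition~\ref{wil-gen} transported to connective $K$-theory, where the only new ingredient is that $Y_i^2=tY_i$, $X_i^2=-tX_i$ and the braid relations are untwisted, so the combinatorics of Billey and Graham--Willems goes through verbatim with $t$ as a parameter. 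Finally, specializing $t=0$ and $t=1$ recovers \eqref{billey-main} and \eqref{willems-main}, which I would record as an immediate corollary.
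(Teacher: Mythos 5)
Your overall frame is the right one, and it is close to how the paper organizes things: the statement $\psi^v(w)=\theta_w K^Y(v,w)$ is indeed equivalent, via Theorem~\ref{mainthm} and \eqref{expdw}, to $*\psi^v(w)=b^Y_{w,v}=Y^*_v(\delta_w)$ (and likewise $\xi^v$ with $b^X$, $X_v^*$ --- note, by the way, that your phrase ``dual to $Y_{I_v}$'' for $\xi^v$ has the letters crossed: $\xi^v=({\zeta}_{vw_\circ})^\circ$ is the $Y$-side class and pairs with $X_v^*/b^X$, while $\psi^v$ pairs with $Y_v^*/b^Y$; this is exactly the crossing in Theorem~\ref{duality}). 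The gap is in what you call ``the routine induction of Proposition~\ref{wil-gen} transported to connective $K$-theory.'' Proposition~\ref{wil-gen}'s induction only identifies the axiomatic Kostant--Kumar functions with Bott--Samelson classes; in connective $K$-theory, $\psi^v$ and $\xi^v$ are \emph{defined} by \eqref{defpsiv}, so that induction carries no information about $K^Y(v,w)$ or $b^Y_{w,v}$. The actual content of the theorem is the induction step matching the two sides, and for that you need a recursion on the Kostant-polynomial (equivalently $b$-coefficient) side, which your proposal never produces. Concretely, the paper derives from ${\mathcal R}^Y_w={\mathcal R}^Y_{ws_i}(1-y_{-w\alpha_i}Y_i)$ and $Y_i^2=tY_i$ the recursion \eqref{reckos} for $K^Y(u,w)$, together with the identity $\theta_w=\theta_{ws_i}/(1-y_{-w\alpha_i}t)$, and then checks by a direct computation that $X_{-i}\bigl(\theta_{\cdot}K^Y(v,\cdot)\bigr)=\theta_{\cdot}K^Y(vs_i,\cdot)$, so that \eqref{recpsixi} propagates the equality downward from the base case $v=w_\circ$ (checked against \eqref{topclass}). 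This computation, not the bookkeeping of $w_\circ$'s and inverses, is the crux; your write-up identifies the indexing conventions as the main obstacle and treats the inductive step as automatic, which it is not.

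A second caution: if your intended shortcut is to get $*\psi^v=Y_v^*$ (hence $b^Y_{w,v}=*\psi^v(w)$) from a duality statement --- $Y_v^*$ is Poincar\'e dual to $\zeta_v$ by \cite[Theorem~12.4]{CZZ1}, so it would suffice to know that $*\psi^v$ is also Poincar\'e dual to $\zeta_v$ --- then note that this last fact is precisely the connective $K$-theory analogue of \eqref{idealsh-dual}, which is \emph{not} available a priori: Kostant--Kumar proved it only in cohomology and $K$-theory, and the paper obtains the connective version as a \emph{consequence} of Theorem~\ref{locck} (Theorem~\ref{duality} is an equivalence, used in one direction for $t=0,1$ and in the other direction for general $t$). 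Invoking it to prove Theorem~\ref{locck} for general $t$ would be circular. So either supply the missing recursion \eqref{reckos} and the $\theta$-identity and run the induction directly (the paper's route), or give an independent proof of the connective duality; as written, the proposal does neither.
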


These statements can be made more explicit as follows, simply based on the definition of ${\mathcal R}^Y_w$ and ${\mathcal R}^X_w$, as well as on the relations between $Y_i$ and $X_i$. Recall that, in connective $K$-theory, the normalizing parameter $\theta_w\in S$ (defined in Lemma~\ref{evallem})  was calculated in \eqref{thetack}, so we can use this expression below. 

\begin{cor} 
{\rm (1)} We have
\begin{equation}\label{explicit1}
\psi^v(w)=\theta_w \sum_{1\le j_1<\ldots< j_k\le l}(-1)^k\, t^{k-\ell(v)} \,y_{\beta_{j_1}}\ldots y_{\beta_{j_k}}\,,
\end{equation}
where $\ell(v)\le k\le l$, and the summation ranges over the integer sequences $1\le j_1<\ldots< j_k\le l$ for which the corresponding subword $(p_1,\ldots,p_k)=(i_{j_1},\ldots,i_{j_k})$ of $I_w$ is subject to $Y_{p_1}\ldots Y_{p_k}=t^{k-\ell(v)}Y_v$.

{\rm (2)} We have
\begin{equation}\label{explicit2}
\xi^v(w)=\sum_{1\le j_1<\ldots< j_k\le l}(-t)^{k-\ell(v)} \, y_{-\beta_{j_1}}\ldots y_{-\beta_{j_k}}\,,
\end{equation}
where $\ell(v)\le k\le l$, and the summation ranges over the integer sequences $1\le j_1<\ldots< j_k\le l$ for which the corresponding subword $(p_1,\ldots,p_k)=(i_{j_1},\ldots,i_{j_k})$ of $I_w$ is subject to $X_{p_1}\ldots X_{p_k}=\pm t^{k-\ell(v)} X_v$.
\end{cor}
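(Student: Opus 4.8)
The plan is to derive this corollary directly from Theorem~\ref{locck} by making the formal Kostant polynomials $K^Y(v,w)$ and $K^X(v,w)$ explicit, using only the definition of the root polynomials and the structure of the Demazure algebra in connective $K$-theory. First I would expand the $Y$-root polynomial $\mathcal{R}^Y_w = \prod_{k=1}^{l} h^Y_{i_k}(\beta_k)$ with $h^Y_i(\lambda)=1-y_\lambda Y_i$, where $\beta_k = s_{i_1}\ldots s_{i_{k-1}}\alpha_{i_k}$, as in \eqref{reflord}. Multiplying out the product gives a sum over subsets $\{j_1<\ldots<j_k\}\subseteq\{1,\ldots,l\}$ of terms $(-1)^k y_{\beta_{j_1}}\ldots y_{\beta_{j_k}}\, Y_{i_{j_1}}\ldots Y_{i_{j_k}}$ (the $y$'s commute past the $Y$'s and past each other in $Q_W[[\Lambda]]_F$, so the coefficient collects cleanly). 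Then I would use the relations $Y_i^2=tY_i$ together with the ordinary braid relations (Example~\ref{exkij}~(2)), which show that the monomials $Y_{p_1}\ldots Y_{p_k}$ in the $0$-Hecke-type algebra over $S$ reduce to $t^{k-\ell(v)}Y_v$ whenever the subword $(p_1,\ldots,p_k)=(i_{j_1},\ldots,i_{j_k})$ is a Hecke word for $v$, and to $0$ otherwise. Collecting the coefficient of $Y_v$ this way yields
\[
K^Y(v,w)=\sum_{1\le j_1<\ldots<j_k\le l}(-1)^k\, t^{k-\ell(v)}\,y_{\beta_{j_1}}\ldots y_{\beta_{j_k}}\,,
\]
and substituting into $\psi^v(w)=\theta_w K^Y(v,w)$ from Theorem~\ref{locck}~(1) gives \eqref{explicit1}.

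For part (2), I would repeat the computation with the $X$-root polynomial $\mathcal{R}^X_w=\prod_{k=1}^l h^X_{i_k}(\beta_k)$, where $h^X_i(\lambda)=1+y_{-\lambda}X_i$. Expanding produces terms $y_{-\beta_{j_1}}\ldots y_{-\beta_{j_k}}\,X_{i_{j_1}}\ldots X_{i_{j_k}}$, and now the relevant relations are $X_i^2=-tX_i$ and the braid relations, so a monomial $X_{p_1}\ldots X_{p_k}$ equals $\pm t^{k-\ell(v)}X_v$ when the subword is a Hecke word for $v$ (with a sign that must be tracked carefully, as in Graham's formula \eqref{willems-main1}), and vanishes otherwise. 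Extracting the coefficient of $X_v$ gives
\[
K^X(v,w)=\sum_{1\le j_1<\ldots<j_k\le l}(-t)^{k-\ell(v)}\,y_{-\beta_{j_1}}\ldots y_{-\beta_{j_k}}\,,
\]
after folding the overall sign into the reduction rule; combined with $\xi^v(w)=K^X(v,w)$ from Theorem~\ref{locck}~(2), this is \eqref{explicit2}. The specialization $t=1$ recovers \eqref{willems-main} and \eqref{willems-main1}, and $t=0$ recovers \eqref{billey-main}, after the identifications $y_\lambda=1-e^{-\lambda}$ and $\theta_w=e^{\rho-w\rho}$ from Examples~\ref{exfgl} and \ref{exeval}.

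The main obstacle I anticipate is the bookkeeping in part (2): one must verify that in the connective $K$-theory Demazure algebra a product $X_{p_1}\ldots X_{p_k}$ really does reduce to $\pm t^{k-\ell(v)}X_v$ (and determine precisely when the reduction produces $+X_v$ versus $-X_v$), so that the factor $(-t)^{k-\ell(v)}$ in \eqref{explicit2} absorbs the sign uniformly. This is the analogue of the sign subtlety already present in Graham's $K$-theoretic statement \eqref{willems-main1}, and it is the one place where the computation is not purely formal; the rest is a routine expansion of a product of binomials plus the straightforward reduction rule $Y_i^2=tY_i$. A minor additional point is to note that, since we are already in the hyperbolic case with $\mu_2=0$ specialized to connective $K$-theory, Theorem~\ref{mainthm} guarantees $K^Y(v,w)$ and $K^X(v,w)$ lie in $S$ and are independent of the choice of $I_w$, so the right-hand sides of \eqref{explicit1} and \eqref{explicit2} are well defined.
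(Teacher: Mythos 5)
Your proposal is correct and follows essentially the same route as the paper, which obtains the corollary from Theorem~\ref{locck} ``simply based on the definition of ${\mathcal R}^Y_w$ and ${\mathcal R}^X_w$, as well as on the relations between $Y_i$ and $X_i$'': expand the product of the $h_i$'s, reduce the monomials via $Y_i^2=tY_i$ (resp.\ $X_i^2=-tX_i$) and the ordinary braid relations, and read off the coefficient of $Y_v$ (resp.\ $X_v$). The sign issue you flag in part (2) is in fact automatic: braid relations introduce no signs and each application of $X_i^2=-tX_i$ contributes exactly one factor $-t$, so every surviving monomial reduces to $(-t)^{k-\ell(v)}X_v$, which is precisely the coefficient appearing in \eqref{explicit2}.
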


The formulas \eqref{billey-main}, \eqref{willems-main}, and \eqref{willems-main1} of Billey and Graham-Willems are clearly special cases of \eqref{explicit1} and \eqref{explicit2}, which can be retrieved by setting $t=0$ and $t=1$. We also need the conventions in Example~\ref{exfgl}~(1),(2), as well as the corresponding formulas for the normalizing parameter $\theta_w$; more precisely, as we saw in Examples~\ref{exeval}~(1),(2), we have $\theta_w=1$ in cohomology, and $\theta_w=e^{\rho-w\rho}$ in $K$-theory. 

We now prove Theorem \ref{locck} by extending the idea in \cite{Bi99}[Proof~of~Lemma~4.2]. Note that this proof is relatively short but not transparent, by contrast with the proof of the formulas of Billey, and Graham-Willems which we provide in Section \ref{bwrev}. Note also that the more consistent part of Billey's proof of \eqref{billey-main}, namely \cite{Bi99}[Theorem~2] on the independence of the corresponding root polynomial from a reduced word (which was proved using a case-by-case analysis), was already proved here as part of Theorem~\ref{mainthm}.

\begin{proof}[Proof of Theorem {\rm \ref{locck}}] We discuss only the first part, as the second one is completely similar. 
We proceed by decreasing induction on $\ell(v)$. The base case $v=w_\circ$ is easily verified, cf. Example~\ref{exbvv}~(1) and \eqref{topclass}. 

Assuming the result for $v$, and that $vs_i<v$ for some $i$, we will prove it for $vs_i$. Fix $w$ such that $ws_i<w$, so there exists a reduced word for $w$ which ends with $s_i$. By the definition of ${\mathcal R}^Y_w$, we have
\[{\mathcal R}^Y_w={\mathcal R}^Y_{ws_i}(1-y_{-w\alpha_i}Y_i)\,.\]
By the definition of $K^Y(u,w)$ and by using $Y_i^2=tY_i$, the above formula implies:
\begin{equation}\label{reckos} K^Y(u,w)=\casetwo{K^Y(u,ws_i)-y_{-w\alpha_i}\, t \,K^Y(u,ws_i)-y_{-w\alpha_i}\, K^Y(us_i,ws_i)}{u>us_i}{K^Y(u,ws_i)}{u<us_i}\end{equation}

We will now use \eqref{recpsixi}. On one hand, this implies $\psi^{v s_i}(w)=\psi^{v s_i}(ws_i)$. On another hand, by \eqref{reckos}, we have $K^Y(vs_i,w)=K^Y(vs_i,ws_i)$. Thus, we complete the induction step by calculating (cf.~\eqref{actiongkm}):
\begin{align*}(X_{-i}\,(\theta_{\cdot}\,K^Y(v,\cdot))(w)&=\frac{\theta_w\, K^Y(v,w)-\theta_{w s_i}\, K^Y(v, ws_i)}{-y_{-w\alpha_i}}\\
&=\theta_w \,\frac{K^Y(v,w)-(1-y_{-w \alpha_i}\, t) \, K^Y(v,ws_i)}{-y_{-w\alpha_i}}\\
&=\theta_w\, K^Y(vs_i,ws_i)\\
&=\theta_w \,K^Y(vs_i,w)\,.
  \end{align*}
Here the last two equalities follow directly from \eqref{reckos}; the second one uses the following fact:
\[\theta_w=\theta_{ws_i}\left(-\frac{y_{w\alpha_i}}{y_{-w\alpha_i}}\right)=\frac{\theta_{ws_i}}{1-y_{-w\alpha_i}t}\;,\]
which is based on the definition of $\theta_w$, on writing $y_{w\alpha_i}=\iota(y_{-w\alpha_i})$, and on the formula for $\iota(\,\cdot\,)$ in Example~\ref{exfgl}~(3).
\end{proof}

\subsection{The results of Billey and Graham-Willems revisited; duality in connective $K$-theory} \label{bwrev}

In this section we provide an alternative, transparent (and also uniform) proof of the results of Billey and Graham-Willems, which were recalled in Section~\ref{bwres}. 
Indeed, we show that modulo Theorem~\ref{mainthm} these are equivalent to the duality property \eqref{idealsh-dual}, which holds in both cohomology and $K$-theory. In fact, we prove this equivalence in the setup of connective $K$-theory developed in Section~\ref{seclocck}, and also derive the connective $K$-theory analogue of the mentioned duality property, which is yet missing. We also refer to the discussion about duality at the end of Section~\ref{fda}, in the setup of an arbitrary cohomology theory. 

\begin{theo}\label{duality}
The connective $K$-theory analogue of {\rm \eqref{idealsh-dual}} is equivalent to both parts of Theorem~{\rm \ref{locck}}.
\end{theo}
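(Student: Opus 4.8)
The plan is to deduce Theorem~\ref{duality} from a short chain of equivalences, with Theorem~\ref{mainthm} as the bridge, working throughout in the GKM model so that every class is a function $W\to S$. By the connective $K$-theory analogue of \eqref{idealsh-dual} I mean the pair of Poincar\'e-duality statements
\[
p_!(\zeta_v\cdot *\psi^w)=\delta_{v,w}\,,\qquad p_!(\widetilde{\zeta}_v\cdot *\xi^w)=\delta_{v,w}\qquad(v,w\in W)\,,
\]
where $p_!$ is the connective $K$-theory pushforward to a point, $\zeta_v=\zeta_{I_v}$ and $\widetilde{\zeta}_v=\widetilde{\zeta}_{I_v}$ are independent of the reduced word (since $\mu_2=0$ forces the ordinary braid relations), hence form $S$-bases of $E_T^*(G/B)$, while $\psi^v,\xi^v$ are as in \eqref{defpsiv} and $*\psi^w,*\xi^w$ are built from the common base term $*\psi^{w_\circ}=*\xi^{w_\circ}$ by the connective $K$-theory versions of \eqref{recdual} (using $X_i$, resp.\ $Y_i$). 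The first thing to recall is that, for an arbitrary formal group law, \cite[Theorem~12.4]{CZZ1} identifies the Poincar\'e-dual bases of $\{\zeta_{I_v}\}$ and $\{\widetilde{\zeta}_{I_v}\}$ as $\{Y^*_{I_v}\}$ and $\{X^*_{I_v}\}$; since the pairing is perfect, the two displayed identities are together equivalent to $*\psi^w=Y^*_w$ and $*\xi^w=X^*_w$ as elements of $E_T^*(G/B)\cong\mathbf{D}_F^\star$.

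The next step is to translate Theorem~\ref{locck} into a statement about the transition coefficients of \eqref{expdw}. Since connective $K$-theory is the $\mu_2=0$ specialization of the hyperbolic case, Theorem~\ref{mainthm} applies and gives $b^Y_{w,v}=*(\theta_w\,K^Y(v,w))$ and $b^X_{w,v}=*K^X(v,w)$, all independent of reduced words. Hence $\psi^v(w)=\theta_w K^Y(v,w)$, i.e.\ Theorem~\ref{locck}(1), is equivalent (for each fixed $v$ and all $w$) to $(*\psi^v)(w)=b^Y_{w,v}$; likewise Theorem~\ref{locck}(2), $\xi^v(w)=K^X(v,w)$, is equivalent to $(*\xi^v)(w)=b^X_{w,v}$.

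The remaining ingredient, which is the technical heart, is the dictionary identifying the GKM representation of $Y^*_v\in\mathbf{D}_F^\star$ with the function $w\mapsto b^Y_{w,v}$, and that of $X^*_v$ with $w\mapsto b^X_{w,v}$. I would obtain this by combining the expansion $Y^*_{I_v}=\sum_{w\ge v}b^Y_{w,I_v}f_w$ (with $f_w\in Q_W^*$ dual to $\delta_w$), already used in the proof of the proposition on structure constants above, with the explicit form of the GKM restriction maps $i_w^*$ in \cite{CZZ,CZZ1}; it can also be checked directly from \eqref{defxy} in low length together with the push-pull recursions. Granting it, the condition $*\psi^v=Y^*_v$ is literally $(*\psi^v)(w)=b^Y_{w,v}$ for all $w$, and $*\xi^v=X^*_v$ is $(*\xi^v)(w)=b^X_{w,v}$ for all $w$. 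Reading the three steps together then closes the loop: the connective $K$-theory analogue of \eqref{idealsh-dual} is equivalent to the conjunction of $*\psi^w=Y^*_w$ and $*\xi^w=X^*_w$ over all $w$, which in turn is equivalent to the conjunction of the two parts of Theorem~\ref{locck} --- which is exactly the assertion of Theorem~\ref{duality}.

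I expect the hard part to be precisely this last dictionary. All the pieces are individually available, but one must keep track of several twists in the paper at once --- the sign-change involution $*$, the operation $(\cdot)^{\circ}$, the interchange of $v$ and $v^{-1}$ built into the conventions \eqref{zetaw} and \eqref{defi}, and the distinct roles of the Demazure elements $X_i$ and the push-pull elements $Y_i$ --- and verify that they cancel consistently, so that the GKM value of $Y^*_v$ at $w$ is genuinely $b^Y_{w,v}$, with no leftover normalizing factor. Once this translation between the $\mathbf{D}_F^\star$-formalism and the GKM model is pinned down, the rest of the argument is formal; in particular no Lie-type case analysis intervenes here, that having already been absorbed into Theorem~\ref{mainthm}.
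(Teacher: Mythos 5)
Your argument for part (1) is essentially the paper's: via Theorem~\ref{mainthm} you convert $\psi^v(w)=\theta_w\,K^Y(v,w)$ into $(*\psi^v)(w)=b^Y_{w,v}$, identify the right-hand side with $Y^*_v(\delta_w)$ through the expansion \eqref{expdw}, and then invoke \cite[Theorem~12.4]{CZZ1} to recognize this as Poincar\'e duality between $\zeta_v$ and $*\psi^v$. (Incidentally, the step you single out as the ``technical heart'' is exactly the identity $Y^*_{I_v}=\sum_{w\ge v}b^Y_{w,I_v}f_w$ that you yourself quote, so no further verification is needed there.)

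The gap is in part (2). The connective $K$-theory analogue of \eqref{idealsh-dual} is the \emph{single} statement $p_!(\zeta_v\cdot *\psi^w)=\delta_{v,w}$ --- it involves only $\zeta$ and $*\psi$ --- and the theorem asserts that this one statement is equivalent to part (1) \emph{and} to part (2) of Theorem~\ref{locck}. You instead redefine the analogue to be a pair of dualities, adjoining $p_!(\widetilde{\zeta}_v\cdot *\xi^w)=\delta_{v,w}$, and prove that the pair is equivalent to the conjunction of the two parts. That is a different (and weaker) assertion: what is missing is precisely the bridge between the two dualities, namely that ``$*\xi^v$ is Poincar\'e dual to $\widetilde{\zeta}_v$'' is equivalent to ``$*\psi^v$ is Poincar\'e dual to $\zeta_v$''. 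In the paper this is supplied by applying $(\cdot)^{\circ}$ and the sign change $*$ and using \eqref{defpsiv}, which gives $(\xi^{vw_\circ})^{\circ}=\zeta_v$ and $*(\widetilde{\zeta}_{vw_\circ})^{\circ}=*\psi^v$, together with the explicit form of the pairing in \cite{CZZ1}, which is compatible with these twists. Without that step your argument never ties part (2) to the actual analogue of \eqref{idealsh-dual}, and you also lose the consequence built into the theorem that the two parts of Theorem~\ref{locck} are equivalent to each other. The remaining ingredients of your outline (independence of reduced words at $\mu_2=0$, the recursions \eqref{recpsixi} and \eqref{recdual} defining $*\psi^w$, $*\xi^w$, and the use of the perfect pairing to upgrade the numerical dualities to $*\psi^w=Y^*_w$, $*\xi^w=X^*_w$) are fine and agree with the paper.
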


\begin{proof} We have the following equivalences in connective $K$-theory, where the statements are for all $v,w$:
\begin{equation}\label{cheq}
\theta_w\, K^Y(v,w)=\psi^v(w)\;\;\Longleftrightarrow\;\;b_{w,v}^Y=*\psi^v(w)\;\;\Longleftrightarrow\;\;Y_v^*(\delta_w)=*\psi^v(w)\,.
\end{equation}
The first statement is Theorem~{\rm \ref{locck}}~(1), the first equivalence is based on Theorem \ref{mainthm}, while the second one is immediate from the definition of the usual  dual $Y_{v}^* \in \mathbf{D}_F^\star$ of $Y_v$ and the expansion \eqref{expdw}. 
Now recall that, by \cite[Theorem~12.4]{CZZ1}, $Y_{v}^*$ is also Poincar\'e dual to $\zeta_v$. Thus, the statements in \eqref{cheq} are equivalent to $\zeta^v$ being Poincar\'e dual to $*\psi^v$, which is the connective $K$-theory analogue of \eqref{idealsh-dual}.

In the same way, we show that Theorem~{\rm \ref{locck}}~(2) is equivalent to $X_v^*(\delta_w)=*\xi^v(w)$. By \cite[Theorem~12.4]{CZZ1}, $X_{v}^*$ is  Poincar\'e dual to $\widetilde{\zeta}_v$. So the chain of equivalences continues with $*\xi^v$ being Poincar\'e dual to $\widetilde{\zeta}_v$ which, in turn, is equivalent to $(\xi^{vw_\circ})^\circ=\zeta_v$ being Poincar\'e dual to $*(\widetilde{\zeta}_{vw_\circ})^\circ=*\psi^v$; here we used the form of the pairing in~\cite{CZZ1} and \eqref{defpsiv}. This concludes the proof. 
\end{proof}

Summarizing the above arguments, based on the duality result of Kostant-Kumar \eqref{idealsh-dual} in cohomology and $K$-theory, we have a new proof of Theorem~{\rm \ref{locck}}, in the respective cases; furthermore, as we saw in Section~\ref{seclocck}, the explicit form of this theorem are the formulas \eqref{billey-main}, \eqref{willems-main}, and \eqref{willems-main1}, so these immediately follow. On the other hand, by reversing our reasoning, in connective $K$-theory we can rely on the proof of Theorem~{\rm \ref{locck}} in Section~\ref{seclocck}, and derive the analogue of \eqref{idealsh-dual} from the duality result for an arbitrary cohomology theory in \cite{CZZ1}.

\section{Bott-Samelson classes via root polynomials}\label{bscvrp}

In the previous section, we have considered two families of elements in $E_T^*(G/B)$, namely the Bott-Samelson classes ${\zeta}_{I_v}$ and $X_{I_v}^*$, both of which are viewed as functions from $W$ to $S$ (recall that $X_{I_v}^*$ are evaluated at $\delta_w$, cf. the expansion~\eqref{expdw}). By a completely similar reasoning to the proof of Theorem~\ref{duality}, we can see that these families essentially concide in $K$-theory, but not beyond that, due to the dependence on the reduced word $I_v$. Furthemore, in Theorem~\ref{mainthm} we have been able to express $X_{I_v}^*(\delta_w)$ in the hyperbolic case based on formal root polynomials, thus generalizing the results of Billey and Willems in ordinary cohomology and $K$-theory. We now suggest a similar approach for ${\zeta}_{I_v}$, which turns out to be a considerably harder problem. 

We start with no assumption on the underlying formal group law. The essence of the root polynomial approach to the calculation of the Schubert classes in ordinary cohomology and $K$-theory, as stated in Theorem~\ref{locck}, is to consider a certain ``generating function'' of these classes (cf. Proposition~\ref{wil-gen} and \eqref{zetaxi}), and express it based on an $X$-root polynomial. In general, we have Bott-Samelson classes, so we start by fixing an element $w\in W$, and reduced words $I_{vw_\circ}$, $I_{v}'$ for all $v\le w$. By analogy with the mentioned results in ordinary cohomology and $K$-theory, we formulate the following problem.

\begin{prob}\label{compgf} For suitable choices of $I_{vw_\circ}$ and $I_{v}'$ (to be specified), find a combinatorial formula for the generating function
\begin{equation}\label{gfell}
{\mathcal G}(w,(I_{vw_\circ},\,I_v')_{v\le w}):=\sum_{v\le w}\zeta_{I_{vw_\circ}}(ww_\circ)\, X_{I_v'}\,,
\end{equation}
as an element of $S\otimes_R Q_W$ (with $\zeta_{I_{vw_\circ}}(ww_\circ)$ as elements of the left tensor factor $S$).
\end{prob}

We are currently investigating this problem in the case of the hyperbolic formal group law \eqref{defell}, to which the rest of this section is devoted. We start by formulating the two conjectures below, based on our work so far and on computer tests. For simplicity, we denote the parameters in \eqref{defell} by $t:=\mu_1$ and $u:=\mu_2$, so the underlying ring $R$ is ${\mathbb Z}[t,u]$. We view the cases $t=0$ (i.e., the Lorentz formal group law), $t=1$ (see Remark~\ref{normalize}), and arbitrary $t$ as analogues of the ordinary cohomology, $K$-theory, and connective $K$-theory cases, respectively. Thus, it is helpful to make analogies with \eqref{billey-main}, \eqref{willems-main1}, and the discussion about connective $K$-theory at the end of Section~\ref{bwrev}, respectively. Indeed, the following conjecture would generalize these results, which are recovered by setting $u=0$. Recall from Theorem~\ref{mainthm}, cf. also Section~\ref{indredw}, that the formal root polynomials are independent of a reduced word in the hyperbolic case, so we can index them by the corresponding Weyl group element.

\begin{conj}\label{gfroot} We have, for suitable choices of $I_{vw_\circ}$ and $I_{v}'$:
\[{\mathcal G}(w,(I_{vw_\circ},\,I_v')_{v\le w})\,\in\,{\mathcal R}_w^X+u(S\otimes_R Q_W)\,.\]
\end{conj}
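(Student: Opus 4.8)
The plan is to prove Conjecture~\ref{gfroot} by reducing it, via duality, to a statement that is already essentially contained in Theorem~\ref{mainthm} when specialized modulo $u$. Recall that the hyperbolic formal group law \eqref{defell} reduces modulo $u=\mu_2$ to the multiplicative law $F_m(x,y)=x+y-\mu_1xy$ of connective $K$-theory; similarly, the twisted braid relations \eqref{braid3ell} and \eqref{braid4ell} become the ordinary braid relations modulo $u$, and $\kappa_{i,j}=\mu_2\equiv 0$, while $\kappa_i=\mu_1=t$ is unchanged. Thus the first step is to observe that, working in $(S\otimes_R Q_W)/u(S\otimes_R Q_W)$, the whole connective $K$-theory picture of Sections~\ref{seclocck} and \ref{bwrev} is available: the classes $X_{I_v'}$, $Y_{I_v}$ become independent of reduced words, the root polynomial ${\mathcal R}^X_w$ reduces to the connective $K$-theory root polynomial, and its coefficients $K^X(v,w)$ satisfy Theorem~\ref{locck}~(2), i.e.\ $K^X(v,w)=\xi^v(w)=(\zeta_{vw_\circ})^\circ(w)=\zeta_{vw_\circ}(ww_\circ)$.

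Granting this, the second step is to rewrite the generating function. Modulo $u$, one has
\[
{\mathcal G}(w,(I_{vw_\circ},I_v')_{v\le w})\equiv\sum_{v\le w}\zeta_{vw_\circ}(ww_\circ)\,X_v
=\sum_{v\le w}K^X(v,w)\,X_v={\mathcal R}^X_w\pmod{u}\,,
\]
where the last equality is exactly the expansion \eqref{rwexp} defining $K^X(v,w)$ (all the objects now being reduced-word independent mod $u$). Hence ${\mathcal G}-{\mathcal R}^X_w\in u(S\otimes_R Q_W)$, which is the assertion of the conjecture. The point of the argument is that Theorem~\ref{mainthm} already identifies $*K^X(I_v,I_w)$ with $b^X_{w,I_v}$, and Theorem~\ref{locck}~(2) together with Proposition~\ref{wil-gen}-type reasoning identifies this, via $\zeta_{vw_\circ}$ and the involution $(\,\cdot\,)^\circ$, with the evaluations $\zeta_{I_{vw_\circ}}(ww_\circ)$ appearing in \eqref{gfell}. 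So the conjecture, \emph{after reduction mod $u$}, is not an independent statement but a repackaging of results proved earlier in the paper.

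The genuinely hard part — and the reason this remains a conjecture rather than a theorem — is the clause ``for suitable choices of $I_{vw_\circ}$ and $I_v'$.'' Modulo $u$ the choices are irrelevant, so the mod-$u$ identity above holds for \emph{every} choice; but over $R=\zz[t,u]$ the Bott-Samelson class $\zeta_{I_{vw_\circ}}$ genuinely depends on $I_{vw_\circ}$ (this is the central difficulty beyond $K$-theory emphasized in the introduction), and the expansion of ${\mathcal R}^X_w$ into the $X_{I_v'}$ basis uses the twisted braid relations \eqref{braid3ell}--\eqref{braid4ell}, which inject genuine $u$-dependence. So to prove the \emph{exact} statement ${\mathcal G}\in{\mathcal R}^X_w+u(S\otimes_R Q_W)$ — rather than merely an equality mod $u$, which is automatic — one must produce a compatible system of reduced words for which the higher-order-in-$u$ corrections on the two sides match, or at least cancel in the $u$-linear term and beyond. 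My expectation is that the right choice is a ``consistent'' or ``lexicographically minimal'' family (so that subwords behave well under the Demazure product), and that the key estimate will be a Leibniz-rule bookkeeping argument showing that the $u$-expansion of $\zeta_{I_{vw_\circ}}(ww_\circ)$ and that of the coefficient of $X_{I_v'}$ in ${\mathcal R}^X_w$ agree to all orders once the words are chosen coherently. Establishing that compatibility — essentially a statement about how Bott-Samelson classes and root-polynomial coefficients both deform away from connective $K$-theory — is the main obstacle, and is precisely what the authors describe as still being under investigation.
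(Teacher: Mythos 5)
First, be aware that the paper does not prove this statement: it is stated as an open conjecture, supported only by the type $A_2$ Lorentz example and computer experiments, so there is no proof of record to compare yours against, and your proposal has to stand on its own. Its core observation is genuine: the displayed membership ${\mathcal G}\in{\mathcal R}^X_w+u(S\otimes_R Q_W)$ is precisely a congruence modulo $u$, and modulo $u$ the hyperbolic data specialize to connective $K$-theory, where Theorem~\ref{locck}~(2) identifies $\zeta_{vw_\circ}(ww_\circ)=\xi^v(w)$ with $K^X(v,w)$, i.e.\ with the coefficients of the root polynomial in \eqref{rwexp}. If carried out, this would establish the literal statement for \emph{every} choice of reduced words. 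But two things keep your write-up from being a proof. (i) The specialization steps are asserted, not established: you need that the formal group algebra $S$, its localization $Q$, the twisted group algebra $Q_W$, the GKM recursion \eqref{pushpull} computing the Bott--Samelson classes, and the expansion \eqref{rwexp} all commute with setting $u=0$, and --- the point you skip entirely --- that the kernel of the specialization map on $S\otimes_R Q_W$ is exactly $u(S\otimes_R Q_W)$ (e.g.\ via $S\cong R[[x_1,\dots,x_n]]$ and the freeness of $Q_W$ over $Q$); without that, ``equal after reduction'' does not yield membership in the coset ${\mathcal R}^X_w+u(S\otimes_R Q_W)$.

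(ii) Your final paragraph contradicts your first two. You declare the mod-$u$ equality ``automatic,'' and then claim that the ``exact statement'' ${\mathcal G}\in{\mathcal R}^X_w+u(S\otimes_R Q_W)$ requires matching ``higher-order-in-$u$ corrections'' for a compatible system of reduced words. The displayed statement has no higher-order content: membership in that coset \emph{is} the mod-$u$ equality, once the kernel identification in (i) is in place. So either you have proved the conjecture (modulo (i)) and should say so plainly --- noting that this would make the clause ``for suitable choices'' vacuous, which is itself a sign that the authors intend the conjecture as a step toward the exact formula of Problem~\ref{compgf}, where the choices and the precise $u$-corrections genuinely matter (this is what the example following the conjecture, with its rewriting of $\zeta_{(2,1)}({\rm Id})$ via the formal group law, is illustrating) --- or you believe the statement asserts such an exact matching, in which case your argument does not address it and the ``Leibniz-rule bookkeeping'' program you sketch is left entirely unexecuted. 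As written, the proposal ends by conceding that the main obstacle is unresolved, so under either reading it is not a complete proof: you should fix the intended meaning of the statement, supply the base-change and kernel arguments if you mean the literal coset membership, or supply the all-orders comparison if you mean an exact formula.
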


\begin{example} Consider the Lorentz formal group law, so $t=0$ and $X_i^2=0$. In the case of the root system $A_2$, let $w=w_\circ$ in $W=S_3$. For $v\ne w_\circ$, there is a unique reduced word $I_v=I_v'$, and we choose $I_{w_\circ}=I_{w_\circ}'=(1,2,1)$. We easily compute $\zeta_{I_{v}}({\rm Id})$ based on the setup in Section~\ref{gkmsetup}, see also \cite{GaRa}:
\begin{align*}&\zeta_\emptyset({\rm Id})=y_{-1}y_{-2}y_{-1-2}\,,\qquad\zeta_{(1)}({\rm Id})=y_{-2}y_{-1-2}\,,\qquad\zeta_{(2)}({\rm Id})=y_{-1}y_{-1-2}\,,\\
&\zeta_{(1,2)}({\rm Id})=\zeta_{(2,1)}({\rm Id})=y_{-1-2}\,,\qquad\zeta_{(1,2,1)}({\rm Id})=1+uy_{-2}y_{-1-2}\,.\end{align*}
Thus, we have
\begin{align*}{\mathcal G}(w_\circ,(I_{vw_\circ},\,I_v')_{v\in W})&=(1+y_{-1}X_1)(1+y_{-1-2}X_{2})(1+y_{-2}X_1)+uy_{-2}y_{-1-2}(1+y_{-1}X_1)\\
&={\mathcal R}_{w_\circ}^X+uy_{-2}y_{-1-2}(1+y_{-1}X_1)\,.\end{align*}
Note that, in order to get to the expression in the right-hand side, we need to rewrite $\zeta_{(2,1)}({\rm Id})$ (but not $\zeta_{(1,2)}({\rm Id})$) in the ring $S$ as follows:
\[\zeta_{(2,1)}({\rm Id})=y_{-1-2}=y_{-1}+y_{-2}+uy_{-1}y_{-2}y_{-1-2}\,.\]
So the Lorentz formal group law is involved in a crucial way.
\end{example}

Based on the analogies with \eqref{billey-main} and \eqref{willems-main1} mentioned above, as well as experimental evidence, we also formulate the following positivity conjecture. To state it, consider $v\le w$, and let $\Phi^+\cap w\Phi^-=\{\beta_1,\ldots,\beta_l\}$; also let $z_i=y_{-\beta_i}$. 

\begin{conj}\label{posconj} {\rm (1)} If $t=0$, $\zeta_{I_{vw_\circ}}(ww_\circ)$ is a sum with positive coefficients of terms of the form 
$$u^{(m-\ell(v))/2}z_{j_1}\ldots z_{j_m}\,,$$ 
where $\ell(v)\le m\le l$, $m-\ell(v)$ is even, and $1\le j_1<\ldots< j_m\le l$. 

{\rm (2)} If $t=1$, $\zeta_{I_{vw_\circ}}(ww_\circ)$ is a sum with positive coefficients of terms of the form 
$$(-1)^{k-\ell(v)}u^{(m-k)/2}z_{j_1}\ldots z_{j_m}\,,$$
 where $\ell(v)\le k\le m\le l$, $m-k$ is even, and $1\le j_1<\ldots< j_m\le l$. 
\end{conj}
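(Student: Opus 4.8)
\medskip

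The plan is to run a decreasing induction on $\ell(v)$ using the push-pull recursion, and — since a bare positivity statement does not propagate well through an induction — to upgrade the conjecture along the way to an \emph{exact}, manifestly positive ``subword formula with $u$-weights'', of which \eqref{billey-main}, \eqref{willems-main1} and \eqref{explicit2} would be the $u=0$ specialisations. Working in the GKM model, set $\xi^v:=(\zeta_{I_{vw_\circ}})^\circ$, so $\xi^v(w)=\zeta_{I_{vw_\circ}}(ww_\circ)$. Fix a compatible system of reduced words, so that $I_{(vs_i)w_\circ}$ is obtained from $I_{vw_\circ}$ by appending $i^*$ whenever $vs_i<v$; then the derivation of \eqref{recpsixi} carries over to an arbitrary formal group law, since it only uses \eqref{pushpull}, the $Y$-analogue of \eqref{keyrel}, and the fact that $(\,\cdot\,)^\circ$ is an involution. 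This gives $Y_{-i}\,\xi^v=\xi^{vs_i}$ for $vs_i<v$, which by \eqref{actiongkm}, together with the hyperbolic identity $\tfrac1{y_\gamma}+\tfrac1{y_{-\gamma}}=\mu_1=t$ valid for every root $\gamma$ (cf.\ Examples~\ref{exfgl}~(3) and \ref{exkij}~(3)), becomes
\[
\xi^{vs_i}(w)=\frac{\xi^v(w)-\xi^v(ws_i)}{y_{w\alpha_i}}+t\,\xi^v(ws_i)\,.
\]

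The base case $v=w_\circ$ is \eqref{topclass}: $\xi^{w_\circ}$ is supported at $w=w_\circ$ only, where it equals $\prod_{\alpha\in\Phi^+}y_{-\alpha}=z_1\cdots z_l$ with $l=\ell(w_\circ)=\ell(v)$, which has the desired shape. For the step, given $u$ with $\ell(u)<|\Phi^+|$, pick $i$ with $us_i>u$, put $v:=us_i$, fix $w\ge u$, and substitute the inductively known positive expressions for $\xi^v(w)$ and $\xi^v(ws_i)$ into the displayed recursion. One must track how the reflection order moves: when $ws_i<w$ and we use a reduced word for $w$ ending in $i$, one has $\beta_l=-w\alpha_i$, hence $y_{w\alpha_i}=z_l$ is the last reflection-order variable for $w$, while $\xi^v(ws_i)$ is an expression in $z_1,\dots,z_{l-1}$ alone; when $ws_i>w$, dual bookkeeping applies, with $\xi^v(ws_i)$ now carrying one \emph{extra} variable $y_{-w\alpha_i}$ which must cancel. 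Running through the four Bruhat positions of $u$ and $v=us_i$ relative to $w$ and $ws_i$ (sorted out by the lifting property), one needs two things in each case: (i) the numerator $\xi^v(w)-\xi^v(ws_i)$ is divisible in $S$ by $z_l$ (resp.\ by $y_{w\alpha_i}$) — the hyperbolic counterpart of the fact that $\psi^v(w)$ and $\psi^v(ws_i)$ agree modulo $\beta_l$, which in the case $v\le ws_i$, $v\not\le w$ should follow, once the exact formula is in hand, from the combinatorial observation that every reduced subword of $I_{ws_i}$ spelling $v$ must use the last letter; and (ii) after dividing, recombination with the term $t\,\xi^v(ws_i)$ — which accounts for the drop $k\mapsto k-1$ with $m$ unchanged in part~(2) — again yields a non-negative combination of the monomials prescribed in the conjecture.

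The step I expect to be the main obstacle is controlling the division by $z_l$ simultaneously with positivity. In cohomology and $K$-theory, $\psi^v(w)$ is literally a polynomial in the ``independent enough'' quantities $\beta_j$, resp.\ $e^{\beta_j}$, so both ``divisible by $\beta_l$'' and ``positive'' are unambiguous; in the hyperbolic case the $z_j=y_{-\beta_j}$ satisfy the elliptic relations $y_{\lambda+\nu}=F_h(y_\lambda,y_\nu)$, so these notions are only meaningful relative to a chosen normal form, and it is not clear a priori that the recursion keeps one inside the ``positive cone''. Pinning down the right normal form — equivalently, determining the precise powers of $t$ and $u$ attached to each subword, which are governed by how that subword multiplies out in the hyperbolic Demazure algebra via $X_i^2=-tX_i$, $Y_i^2=tY_i$ and the twisted braid relations \eqref{braid3ell}, \eqref{braid4ell} — is the crux. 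As a guide and partial check, Conjecture~\ref{gfroot} combined with Theorem~\ref{locck} would fix the $u^0$-part of such a formula (the connective $K$-theory subword sum \eqref{explicit2}); what remains is to identify the $u$-corrections and verify that the recursion above closes on them with the claimed signs.
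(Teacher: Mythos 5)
The statement you are trying to prove is Conjecture~\ref{posconj}, which the paper does not prove at all: it is formulated as an open conjecture, supported only by analogy with \eqref{billey-main}, \eqref{willems-main1}, \eqref{explicit2} and by computer experiments, and is explicitly tied to the unresolved Problem~\ref{compgf} and Conjecture~\ref{gfroot}. So there is no argument in the paper to measure yours against, and the relevant question is whether your proposal itself constitutes a proof. It does not. Your setup is sound as far as it goes: the $Y$-analogue of \eqref{keyrel} does give, for a compatible system of reduced words (with $I_{(vs_i)w_\circ}$ obtained from $I_{vw_\circ}$ by appending $i^*$), the recursion $\xi^{vs_i}(w)=\bigl(\xi^v(w)-\xi^v(ws_i)\bigr)/y_{w\alpha_i}+t\,\xi^v(ws_i)$, and the identity $\tfrac1{y_\gamma}+\tfrac1{y_{-\gamma}}=\mu_1$ is correct in the hyperbolic case. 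But the two steps you label (i) and (ii) — divisibility of the numerator by $z_l$ (resp. $y_{w\alpha_i}$) \emph{inside a specified normal form}, and positivity of the recombined expression after division — are precisely the content of the conjecture, and you leave both open; indeed you say so yourself. In particular, since the $z_j=y_{-\beta_j}$ are not algebraically independent but satisfy the hyperbolic relations, ``sum with positive coefficients of the prescribed monomials'' is only meaningful once a normal form is fixed, and you neither fix one nor show the recursion preserves it; without that, even the base case and the four Bruhat-position cases cannot be checked. There is also a secondary issue you gloss over: your induction only applies to compatible choices of reduced words, whereas the conjecture (in the context of Problem~\ref{compgf}) speaks of ``suitable choices'' yet to be specified, so you would at minimum need to argue that compatibility can always be arranged for the choices the conjecture is about.

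In short: what you have is a reasonable strategy sketch — essentially the hyperbolic analogue of the inductive proof of Theorem~\ref{locck} in Section~\ref{seclocck} — together with an honest identification of the obstruction (the $u$-corrections to the subword formula and the positivity-preserving normal form). That obstruction is exactly where the conjecture lives, so the proposal should be regarded as a plan of attack, not a proof.
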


This conjecture should be viewed in the context of Problem~\ref{compgf} and Conjecture~\ref{gfroot}. If true, it would be interesting to find the geometric reason for positivity.


\begin{thebibliography}{HMSZ13}

\bibitem[AHS01]{AHS01}
M. Ando, M. Hopkins, and N. Strickland.
\newblock Elliptic spectra, the Witten genus and the theorem of the cube.
\newblock {\em Invent. Math.}, 146:595--687, 2001.

\bibitem[Bi99]{Bi99}
S.~Billey.
\newblock Kostant polynomials and the cohomology ring for {$G/B$}.
\newblock {\em Duke J. Math.}, 96:205--224, 1999.

\bibitem[BJS93]{BJS}
S.~Billey, W.~Jockusch, and R.~Stanley.
\newblock Some combinatorial properties of Schubert polynomials.
\newblock {\em J. Algebraic Combin.}, 2:345--374, 1993.

\bibitem[BL00]{BL}
S. Billey and V. Lakshmibai.
\newblock {\em Singular loci of Schubert varieties}.
\newblock Progress in Mathematics 182, Birkh\"{a}user Boston, 2000, xii+251 pp. 

\bibitem[BE90]{BE90}
P. Bressler and S. Evens. 
\newblock {The Schubert calculus, braid relations and generalized cohomology.} 
\newblock {\em Trans. Amer. Math. Soc.}, 317:799--811, 1990.

\bibitem[BE92]{BE92}
P. Bressler and S. Evens.
\newblock  Schubert calculus in complex cobordism. 
\newblock {\em Trans. Amer. Math. Soc.}, 331:799--813, 1992.

\bibitem[BB10]{BB10}
V. Buchstaber and E. Bunkova.
\newblock {Elliptic formal group laws, integral Hirzebruch genera and Krichever genera.}
\newblock {\tt arXiv:1010.0944}, 2010.

\bibitem[BB11]{BB11}
V. Buchstaber and E. Bunkova.
\newblock Krichever formal groups. 
\newblock {\em Funct. Analysis and Its Appl.}, 45:23--44, 2011.

\bibitem[BK91]{BK91}
V. Buchstaber and A. Kholodov.
\newblock Formal groups, functional equations and generalized cohomology theories (Russian).
\newblock {\em Mat. Sb.}, 181:75--94, 1990; translation in {\em Math. USSR-Sb.}, 69:77--97, 1991.

\bibitem[CZZ12]{CZZ} 
B. Calm\`es, K. Zainoulline, and C. Zhong.
\newblock {A coproduct  structure on the formal affine Demazure algebra}. to appear in \newblock {\em Math. Z.},
\newblock {\tt arXiv:1209.1676}, 2012.

\bibitem[CZZ13]{CZZ1} 
B. Calm\`es, K. Zainoulline, and C. Zhong.
\newblock {Push-pull operators on the formal affine Demazure algebra and its dual}.
\newblock {\tt arXiv:1312.0019}, 2013. 

\bibitem[CPZ13]{CPZ}
B. Calm\`es, V. Petrov, and K. Zainoulline.
\newblock {Invariants, torsion indices and oriented cohomology of complete flags}. 
\newblock {\em Ann. Sci. \'Ecole Norm. Sup. (4)}, 46:405--448, 2013.


\bibitem[De74]{De74}
M.~Demazure. 
\newblock D\'esingularisation des vari\'et\'es de Schubert g\'en\'eralis\'ees. 
\newblock {\em Ann. Sci. \'Ecole Norm. Sup. (4)}, 7:53--88, 1974.

\bibitem[FK94]{FK}
S. Fomin and A. N. Kirillov.
\newblock {Grothendieck polynomials and the Yang-Baxter equation}.
\newblock {\em Formal power series and algebraic combinatorics}, 183--189, DIMACS, Piscataway, NJ, 1994.

\bibitem[FS94]{Fo94}
S.~Fomin and R. Stanley.
\newblock {Schubert polynomials and the nilCoxeter algebra}.
\newblock {\em Adv. Math.}, 103:196--207, 1994.

\bibitem[Fr98]{Fr}
P. di Francesco.
\newblock New integrable lattice models from Fuss-Catalan algebras. 
\newblock {\em Nuclear Phys. B}, 532:609--634, 1998.

\bibitem[Ga14]{Ga}
N.~Ganter.
\newblock {The elliptic Weyl character formula.}
\newblock {\em Compos. Math.}, 150:1196--1234, 2014.

\bibitem[GR13]{GaRa}
N.~Ganter, A.~Ram.
\newblock {Generalized Schubert calculus.}
\newblock {\em J. Ramanujan Math. Soc.}, 28A:149--190, 2013.

\bibitem[GKM98]{gkmeck}
M.~Goresky, R.~E. Kottwitz, and R.~D. MacPherson.
\newblock Equivariant cohomology, {K}oszul duality, and the localization
  theorem.
\newblock {\em Invent. Math.}, 131:25--83, 1998.

\bibitem[Gr02]{graekt}
W. Graham.
\newblock Equivariant $K$-theory and Schubert varieties.
\newblock Preprint, 2002.

\bibitem[GK08]{gakpte}
W. Graham and S. Kumar. 
\newblock On positivity in $T$-equivariant $K$-theory of flag varieties.
\newblock {\em Int. Math. Res. Not.}, 2008, Article ID rnn093, 43 pp.

\bibitem[HHH05]{hhhcge}
M.~Harada, A.~Henriques, and T.~Holm.
\newblock Computation of generalized equivariant cohomologies of {K}ac-{M}oody  flag varieties.
\newblock {\em Adv. Math.}, 197:198--221, 2005.

\bibitem[Ha78]{Haz} M. Hazewinkel.
\newblock \textit{Formal Groups and Applications}.
\newblock  Pure and Applied Mathematics, 78. Academic Press, New York-London, 1978. xxii+573~pp.

\bibitem[Hi95]{Hir} F. Hirzebruch.
\newblock {\em Topological methods in algebraic geometry}. 
\newblock Classics in Mathematics. Springer, Berlin, 1995. xii+234~pp.

\bibitem[HMSZ13]{HMSZ}
A. Hoffmann, J. Malag\'{o}n-L\'{o}pez, A. Savage, and K. Zainoulline. 
\newblock {Formal Hecke algebras and algebraic oriented cohomology theories}.
\newblock {\tt arXiv:1208.4114}, to appear in Selecta Math., 2013.

\bibitem[HK11]{HK}
J.~Hornbostel and V.~Kiritchenko.
\newblock Schubert calculus for algebraic cobordism.
\newblock {\em J. Reine Angew. Math.}, 656:59--85, 2011.

\bibitem[Hu12]{Hu12}
T.~Hudson.
\newblock {Thom-Porteous formula in algebraic cobordism}.
\newblock {\tt arxiv:1206.2514}, 2012.

\bibitem[KK13]{kakecf}
V.~Kiritchenko and A.~Krishna.
\newblock Equivariant cobordism of flag varieties and of symmetric varieties.
\newblock  \textit{Transform. Groups}, 18:391--413, 2013.

\bibitem[KK86]{KK86}
B. Kostant and S. Kumar.
\newblock {The nil {H}ecke ring and cohomology of {$G/P$} for a {K}ac-{M}oody group}.
\newblock {\em Adv. Math.}, 62:187--237, 1986.

\bibitem[KK90]{KK90}
B. Kostant and S. Kumar.
\newblock {$T$-equivariant $K$-theory of generalized flag varieties}.
\newblock {\em J. Diff. Geom.} 32:549--603, 1990.

\bibitem[Ku02]{Ku02} S. Kumar.
\newblock  \textit{Kac-Moody Groups, Their Flag Varieties and  Representation Theory.} 
\newblock Progress in Mathematics, vol. 204, Birkh\"auser, Boston, MA, 2002.

\bibitem[LS82]{lasps}
A.~Lascoux and M.-P. Sch\mbox{\"{u}}tzenberger.
\newblock Polyn\mbox{\^{o}}mes de \mbox{S}chubert.
\newblock {\em C. R. Acad. Sci. Paris \mbox{S\'e}r. I Math.}, 294:447--450,
  1982.

\bibitem[La90]{lasagv}
A.~Lascoux.
\newblock Anneau de \mbox{G}rothendieck de la vari\mbox{\'e}t\mbox{\'{e}} de
  drapeaux.
\newblock In {\em The Grothendieck Festschrift}, volume III of {\em Progr.
  Math.}, pages 1--34, Boston, 1990. Birkh\mbox{\"{a}}user.

\bibitem[LNZ]{LNZ}
M.-A.~Leclerc, E.~Neher, and K.~Zainoulline.
\newblock  Elliptic Demazure algebras for Kac-Moody root systems.
\newblock  Work in progress.

\bibitem[Le98]{lensff}
C. Lenart.
\newblock Symmetric functions, formal group laws, and {L}azard's theorem.
\newblock {\em Adv. Math.}, 134:219--239, 1998.

\bibitem[LM07]{levmor-book}
M.~Levine and F.~Morel.
\newblock \textit{Algebraic Cobordism}.
\newblock Springer Monographs in Mathematics. Springer, Berlin, 2007.

\bibitem[Qu71]{Qu71}
D.~Quillen.
\newblock Elementary proofs of some results of cobordism theory using Steenrod operations. 
\newblock {\em Adv. Math.}, 7:29--56, 1971.

\bibitem[MR07]{MR07}
\newblock {\em Elliptic cohomology. Geometry, applications, and higher chromatic analogues.}
\newblock Edited by H. Miller and D. Ravenel. London Math. Soc. Lecture Note Ser., 342. Cambridge Univ. Press, Cambridge, 2007. xii+364~pp.

%\bibitem[SGA70]{SGA}
%\newblock \textit{Sch\'emas en groupes III: Structure des sch\'emas en groupes  r\'eductifs.} (SGA 3, Vol. 3).
%\newblock  Lecture Notes in Math. 153, Springer, Berlin-New York, 1970, viii+529~pp. 

\bibitem[St93]{stecyb}
J. Stembridge.
\newblock Coxeter-Yang-Baxter equations?
\newblock Unpublished paper. {\tt www.math.lsa.umich.edu/\~{}jrs/other.html}, 1993.

\bibitem[Wil04]{wilcke}
M.~Willems.
\newblock Cohomologie et {$K$}-th{\'e}orie {\'e}quivariantes des
  vari{\'e}t{\'e}s de {B}ott-{S}amelson et des vari{\'e}t{\'e}s de drapeaux.
\newblock {\em Bull. Soc. Math. France}, 132:569--589, 2004.

\bibitem[Zh13]{Zh}
C. Zhong.
\newblock {On the formal affine Hecke algebra.} to appear in \newblock{\em J. Inst. Math. Jussieu}, 
\newblock {\tt arXiv:1301.7497}, 2013.

\end{thebibliography}
\end{document}